\theoremstyle{plain}
\newtheorem{thrm}{Theorem}[section]
\newtheorem*{thrm*}{Theorem}
\newtheorem{lemma}[thrm]{Lemma}
\newtheorem{prop}[thrm]{Proposition}
\newtheorem{cor}[thrm]{Corollary}
\theoremstyle{definition}
\newtheorem{dfn}[thrm]{Definition}
\theoremstyle{remark}
\newtheorem{rmrk}[thrm]{Remark}
\numberwithin{equation}{section}
\DeclareMathOperator{\tr}{tr}
\newcommand{\R}{\mathbb R}
\newcommand{\N}{\mathbb N}
\newcommand{\al}{\alpha}
\newcommand{\p}{\partial}  
\newcommand{\ddu}{\det du}
\newcommand{\Om}{\Omega}
\newcommand{\e}{\epsilon}
\newcommand{\vn}{\vec{n}}
\newcommand{\Fp}{\mathcal F_p}
\newcommand{\Fpo}{\mathcal F_p (u,\Omega)}
\DeclareMathOperator{\cof}{cof}
\newcommand{\cofu} {\cof du}
\newcommand{\tu}{\tilde u}
\newcommand{\tL}{\tilde{L}}
\newcommand{\fbeta}{\mathfrak b}
\newcommand{\K}{\mathbb K}
\newcommand{\pa}{\p_{par}}
\begin{document}

\thanks{\today}

\title{An Aronsson Type Approach to Extremal Quasiconformal Mappings}

\author{Luca Capogna}\address{Department of Mathematical Sciences,
University of Arkansas, Fayetteville, AR 72701}\email{lcapogna@uark.edu} 
\author{Andrew Raich}\address{Department of Mathematical Sciences,
University of Arkansas, Fayetteville, AR 72701}\email{araich@uark.edu} 

\dedicatory{\it In memory of Juha Heinonen}

\thanks{The first author is partially supported by NSF grant DMS-0800522 and
second author is partially supported by NSF grant DMS-0855822}

\subjclass[2000]{30C70 (Primary), 30C75,35K51, 49K20, 30C65}

\keywords{extremal quasiconformal mappings, quasiconformal mappings in $\mathbb{R}^n$, flow of diffeomorphisms, trace dilation}

\begin{abstract} We study $C^2$ extremal quasiconformal mappings in space
and establish  necessary and sufficient  conditions for a `localized' form of  extremality in the spirit of the work of G. Aronsson on absolutely minimizing Lipschitz extensions. We also prove short time existence for smooth solutions of a gradient flow of QC diffeomorphisms associated to the extremal problem.
 \end{abstract}
\maketitle

%
%
\section{Introduction}

 A quasiconformal (qc) mapping is a homeomorphism, $u:\Om\subset \R^n\to \R^n$ whose components are in the Sobolev space $W^{1,n}_{loc}$ and such that 
 there exists a constant $K\ge \sqrt{n}$ for which $|du|^n \le K \det du$ a.e. in $\Om$. Here
 we denote  $|A|^2=\sum_{i,j=1}^n a_{ij}^2$ to be the Hilbert-Schmidt norm of a 
 matrix and $du$ the Jacobian matrix of $u= (u^1,...,u^n)$ with entries $du_{ij}=\p_j u^i$. At a point of differentiability  $du(x)$ maps spheres
 into ellipsoids and the smallest possible $K$ in the inequality above, roughly provides a bound for the ratio of the largest and smallest axes of such ellipsoids.
 In this sense qc mappings distort the geometry of the ambient space in a controlled fashion. Quoting F.\ Gehring \cite{Gehring:2005}, 
 qc mappings ``\emph{constitute
 a closed class of mappings interpolating between homeomorphisms and diffeomorphisms for which many results of geometric topology hold regardless of dimension}."

 Quasiconformality can be measured in terms of several {\em dilation functions}. Here we will focus on the {\em trace dilation}
 \begin{equation}\label{eqtn: analytic def}
 \K(u,\Om)= \|\K_u(x) {\|}_{L^{\infty}(\Om)} \text{ with }\K_u(x)= \frac{ |du (x)|}{(\det du (x))^{\frac{1}{n}}}.
 \end{equation}
 Other dilation functionals used in the literature are the outer, inner and linear dilation (see \cite{vaisala} for more details) 
 as well as mean dilations for mappings with finite distortion (see \cite{aimo}).

 There are a variety of 
extremal mapping problems in the theory of qc mappings, in fact qc mappings were introduced in just such a context  in \cite{Grotzsch:1928}. 
Extremal problems usually involve two domains $\Omega, \Omega'\subset \R^n$, (or two Riemann surfaces) for which there exists
a quasiconformal mapping $f:\Omega\to\Omega'$, and ask for a quasiconformal map
$u:\Om\to\Om'$ which minimizes a  dilation function
in a given class of competitors. Such competitors are usually other quasiconformal mappings with same boundary data as $f$ on a portion (or all) of $\p\Om$
or in the same homotopy class as the given map $f$. Existence and uniqueness of extremals depend strongly on the dilation function used. Typically,  
existence follows from compactness and lower-semicontinuity arguments
applied to a particular dilation function, and uniqueness does not hold unless the class of competitors is suitably restricted 
(for instance to Teichm\"uller mappings\footnote{Roughly speaking, a planar qc mapping $f$ is Teichm\"uller
if there exist local conformal transformations $\phi, \psi$ such that $\phi \circ f\circ \psi^{-1}$ 
is affine and $\phi$ and $\psi$ give rise to well defined quadratic differentials.}).

Quasiconformal extremal problem arose first in the work of Gr\"otzsch in the late 1920's
and were later studied in the two dimensional case both for open sets and for Riemann surfaces, see for instance \cite{Teichmuller}, \cite{Ahlfors:1954}, \cite{Hamilton:1969}, \cite{Strebel} and references therein.  A celebrated result of Teichm\"uller, which was subsequently proved using two very different methods by Ahlfors \cite{Ahlfors:1954} and by Bers \cite{Bers}, states that given any orientation preserving homeomorphism $f:S\to S'$ between two closed Riemann surfaces of genus $g>1$ there exists among all mapping homotopic to $f$, a unique extremal which minimizes the $L^{\infty}$ norm of the complex  
dilation\footnote{The dilation $K_f=\frac{|\p_z f|+|\p_{\bar z} f|} {|\p_z f|-|\p_{\bar z} f|}$.} $K(f,S)=\|K_f\|_{L^{\infty}(S)}$. 
Moreover, the extremal mapping is a Teichm\"uller map, real analytic except at isolated points and   with constant
dilation $K_f=const$. In \cite{Hamilton:1969}, Hamilton studied the extremal problem 
with a boundary data constraint, and one of his results is
a  {\em maximum principle}  of sorts stating that if $f$ is extremal,  then the maximum of its Beltrami coefficient in $S$ is the same as the maximum on $\p S$.

In higher dimensions, the problem becomes even more difficult and the references in the literature more sparse. The extremality problem  without imposing boundary conditions is studied in the landmark paper \cite{gehring-vaisala}.    Existence and uniqueness for the analogue of Gr\"otzsch problem in higher dimensions is established  in \cite{fehlmann} and a maximum principle for $C^2$ extremal  qc mappings is proved in  \cite{asadchii}. 
 More recently, in \cite{aimo},
 \cite{aim-1} and \cite{MR2314170} the  study of extremal problems for mappings of finite distortion is carried out  for   $L^p$ norms (and more general means) of the dilation functions with  $p$ finite, rather than with the $L^{\infty}$ norm.
 In the same vein, the paper \cite{Balogh-Fassler-Platis} examines extremal problems in the mean
for dilation functions based on the modulus of families of curves.

\bigskip

In the literature discussed above, the study of extremal problems for 
qc mappings in space rests on a careful analysis of compactness properties for families of qc mappings 
with a uniform bound on dilation and on techniques from geometric function theory to establish uniqueness. The finite distortion problem relies on techniques
from {\it direct methods of calculus of variations}, in which the study of the functional itself, rather than its Euler-Lagrange equations, is used. 
This approach is only natural as  the extremal problem is posed in the class of qc mappings, and so there should be no additional hypothesis 
concerning second order derivatives. With this approach, however, 
there is so little regularity that finding information about the structure of extremal mappings 
(let alone the uniqueness) has proven intractable thus far. 
In particular, there is a huge gap between the findings in the two dimensional setting vs.\ the higher dimensional theory.

\bigskip

In the present work we propose an approach to the extremal problem that is motivated by two classic papers: One by Ahlfors \cite{Ahlfors:1954}
in which an $L^p$ approximation of the $L^\infty$ distortion is used to solve the extremal  problem in the setting of Riemann surfaces. 
The other is by  Aronsson \cite{aronsson-3}, (see also \cite{aronsson-4}) where he assumes 
the extra hypothesis of $C^2$ regularity and carries out his program to determine the structure of  absolute minimizing Lipschitz extensions. 

The extremal problem for qc mappings is a $L^\infty$ variational problem that can be rephrased as follows: Given the boundary
restriction $u_0:\p \Om\to\R^n$ of a $C^1(\bar\Om,\R^n)$ qc mapping, find the qc extensions of $u_0$ to $\Om$  with minimal  trace dilation.
From this viewpoint the problem has a superficial similarity to the problem of finding  and studying {\it minimal Lipschitz extensions} $u\in Lip(\Om)$  for scalar valued functions 
$u_0\in Lip(\Gamma)$ to a neighborhood $\Gamma\subset \Om$ in such a way that $Lip(u,\Om)=Lip(u_0,\Gamma)$.\footnote{We have set  
$Lip(u,\Om)=\sup_{x,y\in \Om, x\neq y} \frac{|u(x)-u(y)|}{|x-y|}$.}  
The existence of
minimal Lipschitz extensions was settled in 1934 by McShane (see also \cite{MR2210080} for a more recent outlook  of the problem), but simple examples show that uniqueness fails. In 1967, Aronsson showed that
if the extremal condition is suitably localized to \emph{ absolute minimal Lipschitz extension} (AMLE), i.e., 
$u\in Lip(\Om)$ is AMLE with respect to $u_0\in Lip(\p\Om)$ if $Lip(u,V)=Lip(u,\p V)$ for all $V\subset \Om$,  then a $C^2$  function $u$ is AMLE if and only if it solves the 
$\infty-$Laplacian
\begin{equation}\label{inflap}
u_i u_j u_{ij}=0 \text{ in } \Om.
\end{equation}
In essence, this PDE tells us that $|\nabla u|$ is constant along the flow lines
of $\nabla u$. Aronsson also discovered several links between the geometry of
the flow lines and the regularity and rigidity properties for $\infty-$harmonic functions in planar regions. 
In the 1960's, solutions of \eqref{inflap} could only
be meaningfully defined as $C^2$ smooth. 
In the 1980's, however, a number of authors (see for instance \cite{usersguide}, \cite{jensen}) 
developed the theory of viscosity solutions, leading to Jensen's uniqueness theorem
for AMLE and for the Dirichlet problem for the $\infty-$Laplacian. 
Recent, exciting extensions of Aronsson's work to the vector-valued case provide further links with qc extremal problems 
(see Sheffield and Smart's preprint 
\cite{Sheffield-Smart}) but, as the theory of viscosity solutions has no vector valued counterpart, the standing $C^2$ hypothesis is present even in these very 
recent developments.

The similarities with the AMLE theory prompted us to study a {\it local form} of the  classical extremality condition, in which the qc mapping is required to have minimum dilation in every subset of the domain 
with respect to competitors having the same boundary values on that subset. Our goal is to find an operator that plays an
analogous role to that of  the 
$\infty$-Laplacian in the characterization of extremals and would provide a platform for the qualitative study of these mappings.
The non-linear relation between the 
dilation of a diffeomorphism and the dilation of its trace on a hypersurface introduce further complications in our work.

In order to be more specific about our  results we need to introduce some basic definitions:
If  $\phi$ is a $n\times n$ matrix of $C^1$ functions, then the {\em Ahlfors operator} $S(\phi)$ is given by
\begin{equation}
S(\phi)=\frac{\phi+\phi^T}{2}-\frac{1}{n} \tr (\phi) I  \label{eqn:Ahlfors operator}
\end{equation}
(see \cite{Ahl74}, \cite{reimann:1976}, \cite{ahlfors:1976}).
%
%
If  $u:\Omega\to \Omega'$ is a $C^1(\bar{\Omega})$ orientation-preserving diffeomorphism then 
it is quasiconformal and $\det du\ge \e>0$. For such a mapping we define the
normalized  pull back of the Euclidean metric under $u^{-1}$ as the Riemannian metric $g^{-1}$. In 
coordinates, the metric is expressed by the matrix\footnote{This metric has the following property: for all $V,W\in T_{u(x)} \R^n$ we observe that
$\langle V,W\rangle_{g^{-1}(u(x))}=
\frac{\langle du^{-1} V, du^{-1}W\rangle_{\text{Eucl}}}{(\det du^{-1})^{2/n}}.$
Hence $u: (\Om, dx^2)\to (u(\Om), g^{-1})$ is a conformal map in the sense that
$\langle du V, du W \rangle_{g^{-1}}=(\det du)^{2/n} \langle V, W\rangle_{\mathrm{Eucl}}.$
}
\begin{equation}\label{eqn: g inverse}
g^{-1}_{ij}(u(x))=\Bigg(\frac{du^{-1,T} du^{-1}}{(\det du^{-1})^{2/n}} (x)\Bigg)_{ij}=\frac{du_{ki}^{-1} du_{kj}^{-1}}{(\det du^{-1})^{2/n}} (x).
\end{equation}

The  inverse  $$g_{ij}=\frac{(du du^{T})_{ij}}{(\det du)^{2/n}} =\frac{du_{ik} du_{jk}}{(\det du)^{2/n}}.$$ In \cite{iwaniec-martin}
the metric $g$ is called {\em the distortion tensor}. 
As in the work of Ahlfors \cite{Ahlfors:1954} we consider $L^p$ approximations 

$$\inf_{v} \int_{\Om} \K_v^{np}(x) dx, $$
of the $L^\infty$ variational problem
(these approximations have been studied in depth  in \cite{aimo}).
Let $\Om\subset \R^n$ be a bounded open set.
An orientation preserving QC mapping $u:\Om\to \R^n$ is  {\em $p-$extremal}
if $\|\K_u\|_{L^p(\Om)} \le \|\K_v\|_{L^p(\Om)}$
for all orientation preserving QC mappings $v:\Om\to \R^n$ 
with $u=v$ on $\p\Om$.
It is straightforward 
to derive Euler-Lagrange equations for the $L^p$ variational problem: Every   orientation preserving 
$p-$extremal diffeomorphism $u=(u^1,...,u^n)\in C^2(\Om,\R^n)$ satisfies the fully nonlinear system of PDE
\[
(L_pu)^i=  np\p_j \Big[  \K_u^{np-2} \big( S(g)du^{-1,T}\big)_{ij} \Big] = np \p_j \Big[ \K_u^{np-2} S(g)_{\ell i} du^{j\ell}\Big]=0
\]
in $\Om$, for $ i=1,...,n$. Here $(du)^{ij}$ denotes the $ij$ entry of $du^{-1}$,  
$g^{ij}$ is defined by (\ref{eqn: g inverse}) and $S(g)$ by \eqref{eqn:Ahlfors operator}.
For $C^2$ smooth mappings with non-singular Jacobian, the operator $L_p$ can be expressed in the non-divergence form
$(L_p u)^i = A^{ik}_{j\ell } (du) u_{j\ell}^k$. The quasi-convexity of the $L^p$ variational functional \cite{iwaniec-martin} implies that
the system satisfies the Legendre-Hadamard ellipticity conditions (see 
Lemma \ref{lem:Legendre-Hadamard ellipticity}).
Motivated by  the work of Aronsson, we consider the formal limit as $p\to \infty$ of the PDE $L_p u=0$
and obtain 
\begin{equation}\label{eqn:L infty in terms of S and K}
(L_\infty u)^i = \frac{n^2  |du|^4}{\K_u^3} \big( S(g) du^{-1,T}\big)_{ij} \p_{x_j} \K_u=0,
\end{equation}
or equivalently $S(\tilde g)\nabla \K_u=0$, where $\tilde g=\frac{du^T du}{(\det du)^{2/n}}$ (see Section \ref{sec:L^infty} below). 
This PDE tells us that the trace dilation $\K_u$ is constant along the flow lines
of  the rows of the matrix $S(g) du^{-1,T}$ (and their linear combinations with $C^1$ coefficients). Since the derivation of \eqref{eqn:L infty in terms of S and K} is formal, a priori there need not be any links between solutions of this PDE and the extremal problem for qc mappings. However, such links exist and  are addressed by the main results of the present paper. 
\begin{thrm}\label{thm: max principle of K}
 Let $\Om\subset \R^n$ be an open set.
If $u\in C^2(\bar \Om,\R^n)$ is an orientation preserving diffeomorphism solution of $L_{\infty} u=0$ in $\Omega$, then for any bounded 
subdomain $\bar D\subset \Omega$,
$$\K(u,\bar D)\le \sup_{\p D} \K_u.$$
Moreover, if $n\ge 3$
and $\K_u$ has a strict maximum on $\p D$ in the sense that $\K_u(z) < \sup_{\p D} \K_u$ for $z\in D$, 
then
 \begin{equation}\label{final one}
\K(u,\bar D) = \sup_{\p D} \K_u \leq \sqrt n (n-1)^{-\frac{n-1}{2n}}  \sup_{\p D} \K_{u, \p D}^{\frac{n-1}{n}},
\end{equation}
where $\K_{u, \p D}$ denotes the dilation of the trace of $u$ on $\p D$ (see Definition \ref{tangential dilation}).
\end{thrm}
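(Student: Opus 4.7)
The plan is to first rewrite $L_\infty u = 0$ in its equivalent scalar form $S(\tilde g)\nabla \K_u = 0$, where $\tilde g = du^T du/(\det du)^{2/n}$ is smooth, symmetric, and positive definite. This asserts pointwise that $\nabla \K_u(x)$ lies in the kernel of the traceless symmetric matrix $S(\tilde g)(x)$. At any point where $u$ is not locally conformal, $S(\tilde g)$ is a nonzero traceless symmetric matrix and therefore has eigenvalues of both signs, so its kernel has dimension at most $n-2$. In particular the rows of $S(g)du^{-1,T}$ yield at least two linearly independent vector fields along whose integral curves $\K_u$ is constant. Note also that at every conformal point $\K_u$ equals its absolute minimum $\sqrt n$ (by AM--GM applied to the singular values of $du$), so such points cannot support an interior maximum whenever $\K(u,\bar D)>\sqrt n$.

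To prove the weak maximum principle $\K(u,\bar D)\le \sup_{\p D}\K_u$, I would combine the flow-line transport of $\K_u$ with an open-closed argument on the super-level set $\{x\in\bar D:\K_u(x)\ge M\}$ for $M$ slightly below $\sup_{\bar D}\K_u$. At a putative interior maximum point of $\K_u$ where $u$ is non-conformal, one of the admissible vector fields is nonzero and displaces the point while preserving $\K_u$, so the connected component of the super-level set through it cannot be compactly contained in $D$ and must meet $\p D$. The possibility of conformal maxima is ruled out as noted above.

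For the sharp inequality, pick $x_0\in\p D$ with $\K_u(x_0)=\sup_{\p D}\K_u=\K(u,\bar D)$. The strict maximum hypothesis $\K_u(z)<\K_u(x_0)$ for $z\in D$, combined with a boundary-point (Hopf-type) argument adapted to the equation $S(\tilde g)\nabla \K_u=0$, forces $\nabla \K_u(x_0)$ to be a strictly positive multiple of the outward normal $\nu(x_0)$. Substituting into the PDE gives $S(\tilde g(x_0))\nu(x_0)=0$, so $\nu(x_0)$ is an eigenvector of $\tilde g(x_0)$ with eigenvalue $\tr\tilde g(x_0)/n$. Writing $\sigma_1,\dots,\sigma_{n-1}$ for the tangential singular values of $du(x_0)$ and $\sigma_n$ for the normal one, this eigenvalue condition becomes $\sigma_n^2=\frac{1}{n-1}\sum_{i=1}^{n-1}\sigma_i^2$. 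A direct computation with this constraint yields the identity
\begin{equation*}
\K_u(x_0)=\sqrt{n}\,(n-1)^{-(n-1)/(2n)}\,\K_{u,\p D}(x_0)^{(n-1)/n},
\end{equation*}
and passing to the supremum over $\p D$ on the right-hand side delivers \eqref{final one}.

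The main obstacles will be the boundary-point step in the third paragraph together with the weak maximum principle in the second, since $S(\tilde g)$ is sign-indefinite so that the classical strong and Hopf maximum principles for scalar second-order PDEs do not apply directly; one must instead exploit the geometry of the admissible flow directions in the kernel of $S(\tilde g)$ and separately rule out a vanishing normal gradient at $x_0$. The restriction $n\ge 3$ in the sharp bound is tied to the fact that in dimension two the traceless symmetric matrix $S(\tilde g)$ either vanishes (conformal) or is invertible, so the PDE collapses to the rigid dichotomy ``$\nabla \K_u=0$ or $u$ is locally conformal,'' and the sharp boundary inequality loses the geometric content it has in higher dimensions.
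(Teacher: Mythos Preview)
Your overall architecture matches the paper's, including the final algebraic identity, but there are two concrete places where the paper supplies an argument you have not.

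\textbf{Weak maximum principle.} Your open--closed sketch does not exclude the possibility that the integral curve of a row of $S(g)\,du^{-1,T}$ through an interior point $p_0$ with $\K_u(p_0)=k_0>\sup_{\p D}\K_u$ stays trapped in $D$ for all time (winding, or accumulating on a compact invariant set) without ever reaching $\p D$. ``Displacing the point while preserving $\K_u$'' shows the level set $\{\K_u=k_0\}$ contains a nontrivial curve through $p_0$, but gives no reason that this curve must escape $D$. The paper closes this gap by a quantitative estimate: along such a curve $\gamma$ (piecing together flow lines of suitable rows so that the active row always satisfies $\sup_j|S(g)_{ij}|\ge c_n|S(g)|$) one computes
\[
u^i(\gamma(t))-u^i(p_0)=\int_0^t S(g)_{li}(\gamma(s))\,ds,
\]
and since $|S(g)|$ is bounded below by a positive constant on $\{\K_u=k_0\}$ (a compactness argument using $k_0>\sqrt n$ and $\det g=1$), one obtains linear growth of $|u(\gamma(t))-u(p_0)|$ in $t$. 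If $\gamma$ never reached $\p D$ this would force $u(D)$ to be unbounded, contradicting that $u\in C^1(\bar D)$.

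\textbf{Boundary step.} You plan to force $\nabla\K_u(x_0)\neq 0$ by a Hopf-type argument and you correctly note that no classical Hopf lemma applies because $S(\tilde g)$ is sign-indefinite. The paper avoids Hopf entirely. At the boundary maximizer $x_0$ either $\nabla\K_u(x_0)\neq 0$ (then it is normal to $\p D$, your eigenvalue computation applies verbatim), or $\nabla\K_u(x_0)=0$. In the latter case, applying the already-proved first part to small balls $B\subset\Om$ centered at $x_0$ shows that $x_0$ cannot be an isolated strict maximum of $\K_u$ in $\Om$; hence there is a continuum through $x_0$ on which $\K_u\equiv\K_u(x_0)=\sup_{\p D}\K_u$, and this continuum meets $D$. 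That directly contradicts the hypothesis $\K_u(z)<\sup_{\p D}\K_u$ for $z\in D$. Thus the strict-maximum assumption itself replaces the Hopf lemma, and the obstacle you flagged dissolves.
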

\begin{cor}\label{cor:K constant on boundary, then K constant}
Given the hypothesis of the previous theorem, 
\begin{enumerate}
\item  
if $\min_{x\in\p \Omega} \K_u(x)>\sqrt n$, then 
\[
\min_{x\in\Omega} \K_u(x) = \min_{x\in\p\Omega} \K_u(x);
\]
\item If $\K_u$ is constant with $\K_u>\sqrt{n}$ on $\p\Om$ 
then $\K_u$ is constant in $\Om$. Moreover, if $n=2$ and
$u$ is affine and  is not conformal on $\p\Om$, then $u$ is  an affine map.
\end{enumerate}
\end{cor}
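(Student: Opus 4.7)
The plan is to establish the interior minimum principle in (1) first and then derive the first half of (2) by combining (1) with the maximum principle in Theorem \ref{thm: max principle of K}; the $n=2$ rigidity in the second half of (2) is a separate step.

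For part (1), the key structural observation is that the PDE $L_\infty u = 0$ reads $S(\tilde g)\nabla \K_u = 0$, which is \emph{linear} in the scalar function being differentiated. Consequently $h := 1/\K_u$ (well-defined because $u$ is a diffeomorphism, so $\K_u > 0$) satisfies $S(\tilde g)\nabla h = 0$ as well, and so does any $C^1$ function of $\K_u$. I would then apply the flow-line mechanism underlying the proof of Theorem \ref{thm: max principle of K} -- namely, that a function annihilated by $S(\tilde g)$ is constant along the integral curves of vector fields in $\ker S(\tilde g)$ -- to $h$ in place of $\K_u$: this produces a maximum principle for $h$, which is precisely the minimum principle for $\K_u$. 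The hypothesis $\min_{\p\Omega}\K_u > \sqrt n$ is essential here; it guarantees $S(\tilde g) \neq 0$ on a neighborhood of $\p\Omega$, so that the relevant flow lines do not degenerate into the conformal set $\{\K_u = \sqrt n\}$ before meeting $\p\Omega$.

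The two-dimensional case admits a more elementary approach. When $n = 2$ the matrix $S(\tilde g)$ is symmetric and trace-free, hence either zero or of full rank, so $S(\tilde g)\nabla \K_u = 0$ forces $\nabla \K_u = 0$ at every point where $u$ is nonconformal. Thus $\K_u$ is locally constant on the open set $\{\K_u > \sqrt 2\}$. The hypothesis $\K_u > \sqrt 2$ on $\p\Omega$, together with continuity across the closed conformal set $\{\K_u = \sqrt 2\}$, prevents any nonconformal component from having interior boundary inside $\Omega$; connectedness of $\Omega$ then forces $\K_u$ to be globally constant in $\bar\Omega$, which establishes (1) and the first half of (2) simultaneously in dimension two. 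For the second half of (2), once $\K_u \equiv c > \sqrt 2$ in $\Omega$, $u$ is a planar Teichm\"uller-type mapping, and the classical uniqueness of extremal mappings with prescribed nonconformal affine boundary values identifies $u$ with the affine extension.

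For the first half of (2) in general dimension, Theorem \ref{thm: max principle of K} gives $\K_u \leq c$ in $\Omega$ and part (1) gives $\K_u \geq c$, so $\K_u \equiv c$. The main obstacle is part (1): the statement of Theorem \ref{thm: max principle of K} is one-sided, so deducing a minimum principle is not a black-box consequence; one must re-enter the proof and apply the flow-line argument to $1/\K_u$, and this in turn requires careful handling of the possible degenerate set $\{S(\tilde g) = 0\} = \{\K_u = \sqrt n\}$ -- exactly the set that the boundary hypothesis $\K_u > \sqrt n$ on $\p\Omega$ is designed to push away from the boundary.
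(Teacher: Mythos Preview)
Your approach to part (1) and the first half of (2) is correct and is essentially the paper's argument, but the passage through $h=1/\K_u$ is an unnecessary detour. The flow-line mechanism underlying Proposition~\ref{prop: max principle of K} already yields a two-sided statement: $\K_u$ is \emph{constant} along the relevant integral curves, and any such curve starting at a point where $\K_u>\sqrt n$ must reach $\partial\Omega$. Hence every value of $\K_u$ attained in the interior and exceeding $\sqrt n$ is also attained on $\partial\Omega$, which gives the minimum principle directly---there is no need to convert a maximum principle for $h$ into a minimum principle for $\K_u$. The paper argues precisely this way: if some $x_0\in\Omega$ had $\sqrt n<\K_u(x_0)<\min_{\partial\Omega}\K_u$, the flow curve through $x_0$ would hit $\partial\Omega$ carrying the value $\K_u(x_0)$, contradicting the boundary hypothesis. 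Your $n=2$ rank observation for $S(\tilde g)$ is correct and the paper records the same phenomenon in the Remark following Proposition~\ref{planar case}.

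For the affine rigidity in part (2) your route genuinely diverges from the paper's, and it has a gap. You invoke classical uniqueness of extremal mappings with affine boundary data, but $L_\infty u=0$ is not known to imply that $u$ is extremal---the paper states this equivalence only as a conjecture. Even granting that the affine extension is the unique extremal, you would still need to argue that $u$ achieves the minimal dilation among all competitors; merely having constant dilation does not force this. The paper's argument is self-contained and avoids any appeal to Teichm\"uller uniqueness: once $\K_u$ is constant, a direct computation (the Remark preceding Proposition~\ref{planar case}) shows that the $i$-th row of $du$ is itself constant along the flow of the $i$-th row of $S(g)\,du^{-1,T}$, since $\frac{d}{ds}\,du_{ij}(\gamma(s))=\K_u\,\partial_{x_j}\K_u(\gamma(s))=0$. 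In dimension two neither row of $S(g)\,du^{-1,T}$ can vanish where $\K_u>\sqrt 2$, so both families of flow lines reach $\partial\Omega$, and the hypothesis that $du$ is constant on $\partial\Omega$ propagates inward to give $du$ constant throughout $\Omega$.
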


\begin{thrm}\label{converse}
If $u\in C^2(\Om,\R^n)\cap C^1(\bar \Om,\R^n)$ is an orientation preserving diffeomorphism, such that  for every $\bar D\subset \Om$ and $v\in C^2(D,\R^n)\cap C^1(\bar D,\R^n)$ orientation preserving diffeomorphism with $u=v$ on $\p D$ we have
$\K(u,\bar D)\le \K(v,\bar D)$ then $L_\infty u=0$ in $\Om$. If $n\ge 3$
and  for every $D\subset \Om$,
\begin{equation}\label{final two}
\K(u,\bar D) \le n^{-\frac{1}{2n}} \sup_{\p D} \K_{u,\p D}^{\frac{n-1}{n}},
\end{equation}
then $L_\infty u=0$ in $\Om$.
\end{thrm}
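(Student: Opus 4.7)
The plan is to prove both parts by contradiction. By \eqref{eqn:L infty in terms of S and K}, the identity $L_\infty u=0$ is equivalent, where $du$ is non-singular, to $S(\tilde g)\nabla\K_u=0$. Hence if $L_\infty u(x_0)\neq 0$ at some $x_0\in\Omega$, by continuity $S(\tilde g)\nabla\K_u$ is non-zero on an entire neighborhood $U$ of $x_0$. The task in either case is to exhibit a small ball $D=B_r(x_0)\subset U$ together with either a competitor $v$ with $\K(v,\bar D)<\K(u,\bar D)$ (for the first statement) or a direct failure of the bound \eqref{final two} (for the second statement).

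For the first statement, I would work through the $L^p$ Euler--Lagrange operator $L_p$ introduced earlier. For a smooth, compactly supported test vector field $\phi\in C^\infty_c(D,\R^n)$ and small $s$, the perturbation $v_s=u+s\phi$ is an orientation-preserving diffeomorphism agreeing with $u$ on $\p D$. A direct differentiation of $\K_{v_s}^{np}$ at $s=0$ followed by integration by parts yields
\[
\left.\frac{d}{ds}\right|_{s=0}\int_D \K_{v_s}^{np}\,dx \;=\; -\int_D \phi^i\, (L_p u)^i\,dx.
\]
If $L_\infty u(x_0)\neq 0$, I would select $\phi$ concentrated near $x_0$ so that the pointwise inner product $\phi^i (L_\infty u)^i$ is strictly positive there. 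Because $L_p u$ is, at leading order in $p$, the expression $np\,\K_u^{np-2}$ times the Ahlfors/metric prefactor appearing in \eqref{eqn:L infty in terms of S and K}, the same $\phi$ drives the first variation above strictly negative once $p$ is sufficiently large. Consequently $\|\K_{v_s}\|_{L^{np}(D)}<\|\K_u\|_{L^{np}(D)}$ for small $s>0$ and large $p$. Sending $p\to\infty$ on the fixed bounded domain $D$ converts this into $\K(v_s,\bar D)\leq \K(u,\bar D)$, and strict inequality is obtained by localizing $\phi$ to an open set meeting every superlevel neighborhood of $\arg\max_{\bar D}\K_u$, contradicting local extremality.

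For the second statement, I would instead perform a quantitative Taylor expansion on a shrinking ball $D=B_r(x_0)$. Using Definition \ref{tangential dilation}, the trace dilation $\K_{u,\p D}$ can be read off from $du(x_0)$ and the outward normal to $\p D$ up to $O(r)$ corrections, while $\K(u,\bar D)=\K_u(x_0)+O(r)$. The inequality \eqref{final two} with its sharp constant $n^{-1/(2n)}$ forces an algebraic relation between $\K_u(x_0)$ and $\K_{u,\p D}(x_0)^{(n-1)/n}$ at leading order, and the first-order corrections are linear in $\nabla\K_u(x_0)$ contracted with combinations of $du(x_0)$ and $S(\tilde g)(x_0)$. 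A careful comparison shows that the gap between $n^{-1/(2n)}$ and the larger constant $\sqrt n(n-1)^{-(n-1)/(2n)}$ appearing in Theorem \ref{thm: max principle of K} must be absorbed by precisely the quantity $S(\tilde g)\nabla\K_u(x_0)$, so that \eqref{final two} fails for a suitable choice of $x_0$ and a small ball centered there whenever $L_\infty u(x_0)\neq 0$.

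The principal obstacle is in the first statement: converting the $L^{np}$-decrease produced by the variational calculation into a strict decrease of the sup-norm $\K(v_s,\bar D)$ requires careful coordination of the support of $\phi$, the scale $r$, and the exponent $p$. Unlike in the scalar AMLE theory, the vector-valued nonlinearity of $\K_u$ (through $\det du$ and the Hilbert--Schmidt norm) prevents direct use of viscosity-solution machinery, and the Legendre--Hadamard (rather than strong) ellipticity of $L_p$ means the first-variation term does not automatically dominate the second-order remainder from the perturbation. These complications must be reconciled by tuning $\phi$ and $r$ so that $\K_{v_s}$ lies strictly below $\K(u,\bar D)$ on an entire neighborhood of the argmax set of $\K_u$ in $\bar D$.
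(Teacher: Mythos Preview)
Your approach to the first statement has a genuine gap that you partially identify but do not close. The first-variation argument indeed gives, for each fixed $p$, a range of small $s>0$ (depending on $p$) for which $\|\K_{v_s}\|_{L^{np}(D)}<\|\K_u\|_{L^{np}(D)}$. But to pass to the $L^\infty$ norm by letting $p\to\infty$ you would need a \emph{single} $s>0$ that works for all large $p$, and your argument does not provide this; the admissible $s$ may shrink to zero as $p\to\infty$. Even if you could fix $s$, the limit only yields $\K(v_s,\bar D)\le\K(u,\bar D)$, not strict inequality. Your proposed fix of ``localizing $\phi$ to an open set meeting every superlevel neighborhood of $\arg\max_{\bar D}\K_u$'' is not a mechanism for recovering strictness from integral inequalities.

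The paper avoids all of this by working pointwise rather than through integrals. On a small ball $D$ where $L_\infty u\neq0$ (hence $\nabla\K_u\neq0$), the maximum set $E=\{x\in\bar D:\K_u(x)=\K(u,\bar D)\}$ lies on $\p D$, and at each $x\in E$ one has $\nabla\K_u(x)=\alpha\vec n$ with $\alpha>0$, so $S(g)du^{-1,T}\vec n\neq0$ there. One then sets $u_\lambda=u+\lambda\chi$ with $\chi$ vanishing on $\p D$ and expands $\K_{u_\lambda}$ \emph{pointwise} to first order in $\lambda$ using \eqref{derivatives of K}:
\[
\K_{u_\lambda}=\K_u+\lambda\,\K_u^{-1}\big(S(g)du^{-1,T}\big)_{ij}\,d\chi_{ij}+O(\lambda^2).
\]
An explicit $\chi$ of the form $(1-|x|^2)\sum_l\phi_l(x/|x|)\vec v_l$, with the $\vec v_l$ chosen so that $\langle S(g)du^{-1,T}\vec n,\vec v_l\rangle>0$ on pieces of a finite cover of $E$, makes the coefficient of $\lambda$ strictly negative in a neighborhood of $E$. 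Away from $E$ one already has $\K_u<\K(u,\bar D)-\epsilon$, so the $O(\lambda)$ perturbation does no harm. This yields $\K(u_\lambda,\bar D)<\K(u,\bar D)$ directly, with no passage through $L^p$.

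For the second statement you are overcomplicating matters. It is an immediate corollary of the first part together with the general trace inequality \eqref{oneway trace}: if $L_\infty u\neq0$ in some ball $D$, the first part produces a competitor $v$ with $u=v$ on $\p D$ and $\K(v,\bar D)<\K(u,\bar D)$. Since $\K_{u,\p D}=\K_{v,\p D}$, inequality \eqref{oneway trace} applied to $v$ gives
\[
n^{-1/(2n)}\sup_{\p D}\K_{u,\p D}^{(n-1)/n}=n^{-1/(2n)}\sup_{\p D}\K_{v,\p D}^{(n-1)/n}\le\K(v,\bar D)<\K(u,\bar D),
\]
violating \eqref{final two}. No Taylor expansion on shrinking balls is needed, and the constant $\sqrt n(n-1)^{-(n-1)/(2n)}$ from Theorem~\ref{thm: max principle of K} plays no role here.
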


\begin{cor} Let $u,v\in C^2(D,\R^n)\cap C^1(\bar D,\R^n)$ be  orientation preserving diffeomorphisms, such that  $u=v$ on $\p D$. If
 $L_\infty u=L_\infty v=0$ in $D$ then $\K(u,\bar D)=\K(v,\bar D)$.
 \end{cor}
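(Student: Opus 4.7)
The strategy is to apply Theorem \ref{thm: max principle of K} symmetrically to $u$ and $v$, exploiting the fact that since $u = v$ on $\p D$, the tangential trace dilation on the boundary is common to both maps: $\K_{u,\p D}(x) = \K_{v,\p D}(x)$ for every $x \in \p D$ (as this quantity depends only on the Dirichlet data $u|_{\p D}$ via its tangential derivatives). The first technical point is to upgrade Theorem \ref{thm: max principle of K}, stated for $C^2(\bar\Omega)$ maps, to the $C^2(D)\cap C^1(\bar D)$ regularity of the corollary. This is routine: for an increasing exhaustion of $D$ by subdomains $D_\epsilon$ with $\bar D_\epsilon \subset D$, apply the theorem on each $D_\epsilon$ and pass to the limit, using that $\K_u$ and $\K_v$ are continuous on $\bar D$ (by $C^1(\bar D)$ regularity combined with positivity of the Jacobians on the compact set $\bar D$).

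Next, the refined bound \eqref{final one} (valid for $n\ge 3$ under the strict-maximum hypothesis on $\p D$) applied to both $u$ and $v$ yields
\[
\K(u,\bar D),\ \K(v,\bar D) \;\le\; \sqrt{n}\,(n-1)^{-\frac{n-1}{2n}} \sup_{\p D}\K_{u,\p D}^{\frac{n-1}{n}},
\]
a common upper bound determined entirely by the shared Dirichlet data. Combined with the basic form of the max principle $\K(u,\bar D)\le \sup_{\p D}\K_u$ (and similarly for $v$), this gives two-sided control of both dilations by quantities depending only on $\p D$.

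The principal difficulty is converting the common upper bound into the equality $\K(u,\bar D) = \K(v,\bar D)$. The natural route is a symmetry/contradiction argument invoking Theorem \ref{converse}: assuming without loss of generality that $\K(u,\bar D) > \K(v,\bar D)$, the map $v$ would serve as a strictly better competitor to $u$ on $D$ with the same boundary values, which should conflict with the converse characterization of $L_\infty$-solutions through local extremality; reversing the roles of $u$ and $v$ then yields equality. Making this step rigorous --- in particular reconciling the \emph{local extremality} hypothesis that appears in Theorem \ref{converse} with the forward implication one needs here, and handling the case where the strict maximum condition fails via an approximation argument --- is where the substantive work lies, and I expect it to be the main obstacle.
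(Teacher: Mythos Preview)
Your proposal correctly locates the essential difficulty, but the route you outline does not close the gap. The attempted contradiction via Theorem~\ref{converse} invokes that theorem in the wrong direction: Theorem~\ref{converse} (and its engine, Proposition~\ref{prop:converse}) says that local extremality on every subdomain \emph{implies} $L_\infty u = 0$, not the converse. Thus the existence of a competitor $v$ with strictly smaller dilation on $D$ does not contradict $L_\infty u = 0$ in $D$. Indeed, the Remark immediately following this corollary in the paper states explicitly that the equivalence of $L_\infty u=0$ with absolute minimality is only a \emph{conjecture}; the forward implication you need is not established anywhere in the paper. So the ``main obstacle'' you flag is not a technical wrinkle to be smoothed by approximation---it is the entire substance of the claim, and Theorems~\ref{thm: max principle of K} and~\ref{converse} together do not supply it.

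Your first route, via the tangential-dilation bound \eqref{final one}, also cannot yield equality: it produces only a common \emph{upper} bound for $\K(u,\bar D)$ and $\K(v,\bar D)$ (since $\K_{u,\partial D}=\K_{v,\partial D}$), never an equality, and in any case requires $n\ge 3$ and the strict-maximum hypothesis, neither of which the corollary assumes. The paper's own Remark makes the same point: \eqref{final one} yields only quasi-minimality, with a dimensional constant strictly larger than~$1$. Your exhaustion argument to pass from $C^2(\bar\Omega)$ to $C^2(D)\cap C^1(\bar D)$ is fine, and the maximum principle does give $\K(u,\bar D)=\sup_{\partial D}\K_u$ and $\K(v,\bar D)=\sup_{\partial D}\K_v$; the obstruction is that $\K_u$ and $\K_v$ on $\partial D$ depend on the full differential (including the normal derivative), not just on the common trace, so these two suprema need not coincide.

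The paper states this corollary in the introduction without proof and does not return to it; its placement after Theorem~\ref{converse} suggests the authors have in mind the symmetric argument you sketch. But that argument rests on exactly the unproven forward implication, so as written neither your proposal nor the paper's stated results deliver a complete proof.
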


These results echo some of the $n=2$ theory, in particular the {\it maximum principle} for the dilation in Theorem \ref{thm: max principle of K} recalls Hamilton's result \cite[Corollary 2]{Hamilton:1969}.   The  fact that the dilation is constant along flow lines of a conformally invariant set of vectors recalls the 
analogous  planar result about dilation being constant along the image
of lines under the action of the conformal mappings associated to the quadratic differentials of Teichm\"uller mappings (see  \cite[Page 175]{Strebel}  for a more detailed description).

\begin{rmrk} Theorem \ref{thm: max principle of K} and \eqref{oneway trace} tell us that if $L_\infty u=0$ in $\Om\subset \R^n$, $n\ge 3$, 
then for every $\bar D\subset \Om$ for which $\K_u$ has a strict maximum on $\p D$, $u$ is a quasi-minimizer for the extremal problem for the trace dilation in $D$.   In fact, if  $v\in C^2(D,\R^n)\cap C^1(\bar D,\R^n)$ orientation preserving diffeomorphism with $u=v$ on $\p D$,
$$\K(u,\bar D) 
\le \sqrt n (n-1)^{-\frac{n-1}{2n}}   \sup_{\p D} \K_{u, \p D}^{\frac{n-1}{n}}
= \sqrt n (n-1)^{-\frac{n-1}{2n}}   \sup_{\p D} \K_{v, \p D}^{\frac{n-1}{n}} 
\le \sqrt n (n-1)^{-\frac{n-1}{2n}}  n^{\frac{1}{2n}} \K(v,\bar D).$$
 On the other hand, Theorem \ref{converse} tells us that those diffeomorphisms that  are minimizers for the extremal problem for the trace dilation on every subset $D\subset \Om$ are also solutions of $L_\infty u=0$. This lack of symmetry in our result follows from the fact that the constants in  \eqref{final one} and \eqref{final two} are different. While the constant
in \eqref{final one} seems to be sharp, we are confident that is possible to improve 
on the constant in \eqref{final two} and conjecture: {\it If $u\in C^2(\Om,\R^n)$ then  the condition $L_\infty u=0$ in $\Om$ is equivalent to minimizing the dilation $\K(u,\bar D)\le \K(v,\bar D)$, on any subset $D\subset \subset  \Om$, among competitors $v\in C^2(D,\R^n)\cap C^1(\bar D,\R^n)$ with $v=u$ on $\p D$.} 
\end{rmrk}

A large class of solutions of $L_\infty u=0$ is provided by observing that (in any dimension) 
the set of $C^2$ solutions of $L_{\infty} u=0$ is invariant by  transformations $\tilde u=F\circ u$ and $v = u\circ F$ with $F$ conformal. In particular, all the known explicit extremal QC mappings (that we are aware of) have constant trace dilation and hence satisfy the PDE \eqref{eqn:L infty in terms of S and K}.

\begin{cor} \label{cor:L_infty examples}
(1) Any Teichm\"uller map of the form $u:=\psi\circ v \circ \phi^{-1}$
with $\psi,\phi$ conformal and $v$ affine is a solution of $L_{\infty}u=0$.  (2) the QC mappings 
$u(x)=|x|^{\al-1}x$ for $\al > 0$ solve $L_\infty u=0$ away from the origin.
(3) Let $0<\alpha < 2\pi$ and $(r,\theta,z)$ be cylindrical coordinates for $x=(x_1,...,x_n)$ where  $x_1 = r\cos\theta$, $x_2 = r\sin\theta$ 
and $x_j=z_j$, $3\leq j\leq n$.
The QC mapping 
\begin{equation}\label{eqn:wedge L_infty example}
u(r,\theta,z) = 
\begin{cases} (r,\pi\theta/\alpha,z) & 0\leq \theta\leq\alpha \\ (r,\pi+\pi\frac{\theta-\alpha}{2\pi-\alpha},z) & \alpha<\theta<2\pi \end{cases}
\end{equation}
solves $L_\infty u=0$ away from the set $r=0$.
\end{cor}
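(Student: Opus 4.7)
The plan is to exploit the formula \eqref{eqn:L infty in terms of S and K}: if the trace dilation $\K_u$ is locally constant, then $\nabla\K_u\equiv 0$ and $L_\infty u=0$ is automatic. Accordingly, each of the three examples will be handled by showing that $\K_u$ is constant on the relevant open set where $u$ is smooth. The only auxiliary fact needed is the pointwise conformal invariance
\[
\K_{F\circ u}(x)=\K_u(x),\qquad \K_{u\circ F}(x)=\K_u(F(x)),
\]
which follows directly from writing $dF=\mu O$ with $O\in\mathrm{O}(n)$, $\mu>0$, and using $|AO|=|OA|=|A|$ together with $\det(\mu O)=\mu^n$ for orientation-preserving $F$.

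For (1), an affine orientation-preserving diffeomorphism $v(x)=Ax+b$ has $dv\equiv A$, so $\K_v\equiv |A|(\det A)^{-1/n}$ is a constant $c$; by the invariance just recorded $\K_{\psi\circ v\circ\phi^{-1}}(x)=\K_v(\phi^{-1}(x))=c$, hence $L_\infty u=0$.

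For (2), differentiate to obtain $du(x)=|x|^{\alpha-1}I+(\alpha-1)|x|^{\alpha-3}\,x\otimes x$. In an orthonormal frame whose first vector is $x/|x|$, $du$ is diagonal with radial eigenvalue $\alpha|x|^{\alpha-1}$ and $n-1$ tangential eigenvalues equal to $|x|^{\alpha-1}$. Therefore $|du|^2=(\alpha^2+n-1)|x|^{2(\alpha-1)}$ and $\det du=\alpha|x|^{n(\alpha-1)}$, giving the constant value $\K_u=\sqrt{\alpha^2+n-1}\,\alpha^{-1/n}$ on $\R^n\setminus\{0\}$. For (3), on each open sector $u$ has the form $(r,\theta,z)\mapsto (r,k\theta+c,z)$ with $k=\pi/\alpha$ or $k=\pi/(2\pi-\alpha)$; in the orthonormal cylindrical frames $\{\partial_r,\tfrac{1}{r}\partial_\theta,\partial_{z_1},\dots,\partial_{z_{n-2}}\}$ on source and target, $du$ equals the constant diagonal matrix $\mathrm{diag}(1,k,1,\dots,1)$. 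Hence $\K_u=\sqrt{n-1+k^2}\,k^{-1/n}$ is constant on each sector, so $L_\infty u=0$ there.

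There is no serious obstacle: once the pointwise conformal invariance of $\K$ is recorded, the argument reduces to direct singular-value computations for explicit maps. The only care required is in (3), where the formulas must be verified separately on each of the two open sectors on which $u$ is smooth; on each sector the dilation is constant and therefore $L_\infty u=0$.
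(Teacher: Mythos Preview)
Your proof is correct and follows essentially the same strategy as the paper: in each case one shows that $\K_u$ is locally constant, whence $\nabla\K_u=0$ and \eqref{eqn:L infty in terms of S and K} forces $L_\infty u=0$. The paper carries out the identical singular-value computations for (2) and (3) in Proposition~\ref{prop: example}; for (1) it invokes the conformal invariance of the solution set of $L_\infty u=0$ rather than of $\K_u$ directly, but since affine maps have constant $\K_v$ and your pointwise invariance of $\K$ is exactly Lemmas~\ref{lem: F circ u properties}(a) and~\ref{lem: u circ F properties}(a), the two arguments are equivalent.
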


The proofs of Theorems \ref{thm: max principle of K} and \ref{converse}   rest on the analysis of the flow lines of the rows of the 
distortion tensor $S(\tilde g)$ and the geometric interpretation of $L_\infty u=0$. We show that if $u$ is not conformal on the boundary then these flow lines fill  (row by row) the open set.

The smoothness assumptions we make here are not natural for the problem, as they do not guarantee the necessary 
compactness properties that we need to prove existence of extremals. However, in the spirit of Aronsson's work on $C^2$ AMLE, it is plausible
that the study of $C^2$ mappings can yield a measure of intuition for the general setting. 

We observe that in the proof of the first part of Theorem \ref{thm: max principle of K}, the smoothness hypothesis can be decreased to $W^{2,p}$ for $p$ sufficiently high, using the work of DiPerna and Lions \cite{MR1022305}  (see also \cite{MR2096794}) on solutions of ODE with rough coefficients. In fact, we can rephrase the PDE \eqref{eqn:L infty in terms of S and K} in the following terms:  
{\it A QC mapping $u:\Om\to \Om'$ is a weak solution of $L_\infty u=0$ in $\Om$ if the trace of the corresponding distortion tensor $\tilde g$
is constant along flow lines of linear combinations of the rows of $S(\tilde g)$.}   In this formulation,  the components of $du$  need only be in a suitable Sobolev space or in BV.
At present we 
are unable 
to decrease the smoothness hypothesis to the {\it natural} category of QC mappings and still obtain the maximum principle.

Although currently we do not know how to prove existence of solutions of $L_\infty u=0$ or how to attack the extremal problems for a 
fixed homotopy class of qc mappings,  we indicate a possible strategy for a proof which involves
the construction of  a competitor for $u$ by flowing $u$ along a gradient flow for the $L^p$ norm of the dilation, then 
letting  $p\to \infty$. The initial value problem we need to control is the following:

\begin{equation}\label{intro:ivp}
\begin{cases}
\p_t u_p -L_p u_p=0 & \text{ in }Q \\
u_p=u &\text{ on }\pa Q,
\end{cases}
\end{equation}
where $Q=\Om\times (0,T)$ and $\pa \Om= \Om \times \{0\} \cup \p\Om\times (0,T)$.
We prove the  following

\begin{prop}\label{thrm:short time existence non-linear}
Let $u_0:\Omega\to \R^n$ be a $C^{2,\alpha}$ diffeomorphism, for some $0<\al<1$  with $\det du_0\ge \e>0$ in $\bar\Omega$. Assume that
 $$A_{jl}^{ik}(du_0)\p_j\p_l u_0^k=0,$$
 for all $x\in \p\Omega$ and $i=1,\dots,n$.

For every $\mu\in (0,\al)$ there exists positive constants $C>0$ depending on
 $p, n, \Omega, \e, \|u_0\|_{C^{1,\alpha}(\bar \Omega)}$, and $T>0$ depending on $p, n, \Omega, \e, \|u_0\|_{C^{2,\alpha}(\bar \Omega)}$
 and a diffeomorphism $u\in C^{2,\mu}(Q)$ solving \eqref{intro:ivp} such that
 \begin{equation}
 \|u\|_{C^{2,\mu}(Q)} + \|\p_t u\|_{C^{0,\mu}(Q)} \le C \|u_0\|_{C^{2,\alpha}(\Om)},
 \end{equation}
 \begin{equation}
 \det d u \ge \frac{\e}{2} \text{ for all }(x,t)\in Q.
 \end{equation}
\end{prop}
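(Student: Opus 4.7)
The plan is to recast \eqref{intro:ivp} as a quasilinear parabolic system and to solve it by Banach fixed point in a small parabolic cylinder, with Schauder estimates for an auxiliary linear problem as the main analytic input. Writing $L_p$ in non-divergence form gives, for $C^2$ diffeomorphisms with $\det du > 0$, a system of the shape
\begin{equation*}
\p_t u^i - A^{ik}_{j\ell}(du)\, \p_j\p_\ell u^k = B^i(du), \qquad i=1,\dots,n,
\end{equation*}
where $A$ and $B$ are smooth functions of their argument on the open set $\{\det du > 0\}$ (with $B$ arising from differentiation of $A$ in its matrix argument, and the $\K_u^{np-2}$ factor being smooth as long as $\det du$ stays bounded away from $0$). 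By Lemma \ref{lem:Legendre-Hadamard ellipticity} the principal symbol $A^{ik}_{j\ell}(du_0)\xi_j\xi_\ell$ satisfies the Legendre--Hadamard condition uniformly on $\bar\Omega$, and the compatibility hypothesis $A^{ik}_{j\ell}(du_0)\p_j\p_\ell u_0^k = 0$ on $\p\Omega$ matches $\p_t u$ at $t = 0$ on the parabolic corner, which is what is needed for $C^{2,\mu}$ regularity all the way up to $\pa Q$.

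The first step is Schauder theory for the linear auxiliary system
\begin{equation*}
\p_t v^i - a^{ik}_{j\ell}(x,t)\,\p_j\p_\ell v^k = f^i\ \text{in } Q, \qquad v = u_0\ \text{on } \pa Q,
\end{equation*}
where the coefficients $a^{ik}_{j\ell} \in C^{0,\mu}(\bar Q)$ are close to $A^{ik}_{j\ell}(du_0)$ and satisfy a uniform Legendre--Hadamard condition. For Dirichlet data, the LH condition is enough to verify the Agmon--Douglis--Nirenberg complementing condition, so the Solonnikov theory of parabolic systems yields solvability in $C^{2,\mu}(\bar Q)$ with the estimate
\begin{equation*}
\|v\|_{C^{2,\mu}(\bar Q)} + \|\p_t v\|_{C^{0,\mu}(\bar Q)} \le C\bigl(\|u_0\|_{C^{2,\alpha}(\bar\Omega)} + \|f\|_{C^{0,\mu}(\bar Q)}\bigr),
\end{equation*}
provided the compatibility condition on $\p\Omega\cap\{t=0\}$ holds for the data. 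The constant $C$ here depends on $\mu < \alpha$, on bounds for the coefficient modulus of continuity, and on an ellipticity constant controlled from below in terms of $\e$.

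With the linear theory in hand, define the map $\Phi \colon w \mapsto v$ on
\begin{equation*}
X_{R,T} = \bigl\{ w \in C^{2,\mu}(\bar Q) :\ w = u_0 \text{ on } \pa Q,\ \|w - u_0\|_{C^{2,\mu}(\bar Q)} \le R \bigr\},
\end{equation*}
where $v$ solves the linear system with coefficients $a^{ik}_{j\ell} = A^{ik}_{j\ell}(dw)$ and forcing $f^i = B^i(dw)$. For $R$ small enough so that $\det dw \ge \e/2$ throughout $\bar Q$ (which is guaranteed once $\|dw - du_0\|_{C^0}$ is small), the coefficients are LH-elliptic with a uniform constant and the hypothesis on $u_0$ transfers into the required compatibility on the parabolic corner. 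The linear estimate combined with a short-time interpolation $\|w - u_0\|_{C^0_t C^2_x} \lesssim T^\eta \|w\|_{C^{2,\mu}(\bar Q)}$ shows that $\Phi$ maps $X_{R,T}$ into itself and is a strict contraction once $T$ is sufficiently small. The unique fixed point is the desired solution $u$, the bound in the statement follows directly from the Schauder estimate, and $\det du \ge \e/2$ persists because $\|du - du_0\|_{C^0(\bar Q)} \lesssim T^\eta$.

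The principal difficulty, and the reason $T$ is forced to be short, is not the fixed point argument itself but the verification that the Schauder--Solonnikov theory applies with Legendre--Hadamard (not pointwise) ellipticity together with the corner compatibility condition; once those two facts are in place the argument is essentially mechanical. A secondary nuisance is tracking the dependence of constants on $p$ (which enters through $A$ and $B$ via the exponent $np-2$) and keeping $\det du$ away from zero for the coefficients to remain smooth; both are handled by the continuity of $du$ in $t$ and choosing $T = T(p, n, \Omega, \e, \|u_0\|_{C^{2,\alpha}})$ small.
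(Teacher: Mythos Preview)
Your approach is correct and is the standard alternative packaging of what the paper does. The paper also freezes coefficients and iterates: it builds a sequence $u^h$ solving $\p_t u^h - A(du^{h-1})D^2 u^h = 0$ with data $u_0$ on $\pa Q$, proves uniform $C^{2,\alpha}$ bounds and $\det du^h \ge \e/2$ via time-scaled Schauder estimates (citing Schlag and Misawa rather than Solonnikov, but the content is the same under Legendre--Hadamard ellipticity), and then extracts a limit by Arzel\`a--Ascoli, which is precisely why the conclusion is stated in $C^{2,\mu}$ for $\mu<\alpha$. Your Banach fixed point replaces the compactness step by a contraction estimate; the paper's Lemma establishing geometric decay of $\|du^h - du^{h-1}\|_{C^\alpha}$ via the $T^{(1-\alpha)/2}$ gain in the scaled Schauder inequality is the exact analogue of your ``short-time interpolation'' step, so the two arguments are interchangeable. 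One small slip: for this particular operator the non-divergence form is exactly $(L_p u)^i = A^{ik}_{j\ell}(du)\,\p_j\p_\ell u^k$ with no lower-order term, so your $B^i(du)$ is identically zero (the divergence $\p_j A^i_j(du)$ already equals $A^{ik}_{j\ell}(du)u^k_{j\ell}$ by the chain rule); this only simplifies your argument.
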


We remark that although flows of qc mappings have been studied and used 
several times in the literature, see for instance \cite{Ahl74}, \cite{reimann:1976}, \cite{ahlfors:1976}, \cite{MR2401620}, and \cite{MR0402041}, this is the first instance of a gradient flow used in this context. Study of this flow may also contribute to a better understanding of the well-posedness and long-time behavior of initial value problems related to gradient flows of quasi-convex (and non convex) functionals 
(see \cite{evans-savin-gangbo}).
\vspace{1pc}
\noindent{\it Acknowledgments}. It is a pleasure to thank Hans Martin Reimann and Jeffrey Rauch for their interest and encouragement for this project. 
L.\ C.\  would like to dedicate this paper  in fond  memory of Juha Heinonen, who continues to be an inspiring role model. 

%
%

\section{Preliminaries}

%
%
A map $F:\R^n\to\R^n$ is \emph{conformal} if at every point
$$dF^T dF = \lambda I_n,$$
for some scalar function $\lambda$. Liouville's theorem states that if $n>2$ then
$1-$quasiconformal mappings are conformal and that the only conformal mappings are  compositions of  rotations,  dilations, and the inversion $x\mapsto x/|x|^2$. If $n=2$, then orientation preserving conformal mappings are biholomorphisms (and vice versa).
A simple computation shows that $\lambda=|dF|^2/n$ and $\det dF=\sqrt{\lambda^n}$.
%
%
We now list some equivalent formulations of conformality.
\begin{lemma}\label{lem:conformal properties}
 Let  $F:\R^n\to\R^n$ be a diffeomorphism. The following are equivalent:
\begin{enumerate}\renewcommand{\labelenumi}{(\alph{enumi})}
\item $F$ is conformal; 
\item $\K_F=\sqrt{n}$ identically;
\item The expression $(dF)^{ji} - n \frac{ (dF)_{ij}}{|dF|^2}$ vanishes identically;
\item $S(\frac{dF dF^T}{(\det dF)^{2/n}})=0$.
\end{enumerate}
\end{lemma}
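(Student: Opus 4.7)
The plan is to prove the chain of implications by reducing each condition to a statement about the singular values, or equivalently the matrix $dF^T dF$, of the Jacobian. Since all four conditions are pointwise and algebraic in $dF(x)$, the proof reduces to a fact about a single invertible matrix $A := dF(x)$. Throughout, let $\sigma_1,\dots,\sigma_n>0$ be the singular values of $A$, so that $|A|^2 = \sum_i \sigma_i^2$ and $\det A = \prod_i \sigma_i$ (after absorbing the sign into the orientation; conformality is assumed orientation-preserving here).

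\textbf{(a)$\Leftrightarrow$(b).} If $A^T A = \lambda I$, then $|A|^2 = \tr(A^T A) = n\lambda$ and $(\det A)^2 = \det(A^T A) = \lambda^n$, from which $\K_F = |A|/(\det A)^{1/n} = \sqrt{n}$ follows immediately. For the converse, apply AM--GM to the singular values: $|A|^2/n = \tfrac{1}{n}\sum_i\sigma_i^2 \geq (\prod_i \sigma_i^2)^{1/n} = (\det A)^{2/n}$, with equality iff all $\sigma_i$ are equal. The hypothesis $\K_F = \sqrt{n}$ forces equality, hence $A$ has all singular values equal, which is exactly conformality.

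\textbf{(a)$\Leftrightarrow$(d).} The matrix $g := AA^T/(\det A)^{2/n}$ is symmetric, so $S(g) = g - \tfrac{1}{n}\tr(g) I$. Thus $S(g)=0$ iff $g$ is a scalar multiple of $I$, iff $AA^T = cI$ for some $c>0$. Since $A$ is invertible, this is equivalent to $A^T A = cI$ (multiply $AA^T = cI$ on the left by $A^{-1}$ and the right by $A$), i.e., conformality.

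\textbf{(a)$\Leftrightarrow$(c).} Assume $A^T A = \lambda I$. Then $A^{-1} = \lambda^{-1} A^T$, so $(A^{-1})_{ji} = \lambda^{-1} A_{ij}$, while $|A|^2 = n\lambda$ gives $n A_{ij}/|A|^2 = \lambda^{-1} A_{ij}$, and the two expressions agree. Conversely, if the matrix identity $A^{-1} = (n/|A|^2)\, A^T$ holds, multiplying by $A$ on the right yields $I = (n/|A|^2) A^T A$, so $A^T A = (|A|^2/n) I$, which is conformality.

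The argument is entirely routine linear algebra; the only point requiring a moment's care is the passage $AA^T = cI \Leftrightarrow A^TA = cI$ in (d), where invertibility of $A$ is used, and recognizing the AM--GM equality case in (b). I would organize the write-up as three short paragraphs corresponding to the three equivalences above.
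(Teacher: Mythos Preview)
Your proof is correct. The paper states this lemma without proof, treating the equivalences as elementary; your reduction to singular values with the AM--GM equality case for (b), and the direct matrix manipulations for (c) and (d), are exactly the natural arguments one would supply.
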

Note that  if $n=2$ and $u$ is holomorphic with $\p u/\p z \neq 0$, then
$(du)^{ji} - n \frac{ (du)_{ij}}{|du|^2}=0$ is a restatement of the Cauchy-Riemann equations. 

The action of conformal mappings on $S, \K_u$ and $g$  follows immediately from the definitions.
\begin{lemma}\label{lem: F circ u properties}
Let $u:\R^n\to\R^n$ be a diffeomorphism  and $F$ be an orientation preserving  conformal mapping. If we set $\tilde u= F\circ u$
and denote by $\tilde \K$ and $\tilde g$ the corresponding dilation and distortion tensor, then
\begin{enumerate}\renewcommand{\labelenumi}{(\alph{enumi})}
\item $\tilde \K=\K_u$;
\item $\tilde g= \lambda^{-1} dF g dF^T$;
\item $S(\tilde g)= \lambda^{-1} dF S(g) dF^T$;
\item $(d\tilde u^{-1})^T-n \frac{d\tilde u}{|d\tilde u|^2}=-n \K_u^{-2} (dF^T)^{-1}S(g) (du^{-1})^T$.
\end{enumerate}
\end{lemma}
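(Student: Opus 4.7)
The plan is to derive all four identities by direct pointwise computation from the chain rule $d\tilde u(x) = dF(u(x))\,du(x)$, systematically exploiting the conformality relations
\begin{equation*}
dF^T dF = dF\, dF^T = \lambda I_n, \qquad \det dF = \lambda^{n/2}, \qquad (dF^T)^{-1} = \lambda^{-1} dF, \qquad |dF|^2 = n\lambda,
\end{equation*}
which all follow from Lemma \ref{lem:conformal properties} (and orientation preservation gives the positive branch of $\det dF$). At each point $x$, $dF$ is evaluated at $u(x)$, but since every identity is algebraic and pointwise, this distinction plays no role.

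For (a), I would compute $|d\tilde u|^2 = \tr(du^T dF^T dF\,du) = \lambda\,|du|^2$ and $\det d\tilde u = \lambda^{n/2}\det du$, from which $\tilde \K = \sqrt\lambda\,|du|\big/(\lambda^{1/2}(\det du)^{1/n}) = \K_u$ is immediate. For (b), the definition of $\tilde g = d\tilde u\,d\tilde u^T/(\det d\tilde u)^{2/n}$ factors as $\tilde g = dF(du\,du^T)dF^T/\bigl(\lambda^{n/2}\det du\bigr)^{2/n} = \lambda^{-1}\,dF\,g\,dF^T$. For (c), the symmetry of $g$ passes to $\tilde g$, while $\tr(\tilde g) = \lambda^{-1}\tr(dF\,g\,dF^T) = \lambda^{-1}\tr(dF^T dF\,g) = \tr g$; subtracting the trace part and using $dF\,dF^T = \lambda I$ gives $\lambda^{-1}dF\,S(g)\,dF^T = \lambda^{-1}dF\,g\,dF^T - \tfrac{\tr g}{n}I = S(\tilde g)$.

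The substantive part is (d). After writing $d\tilde u^{-1} = du^{-1}\,dF^{-1}$ and substituting $(dF^T)^{-1} = \lambda^{-1}dF$, both sides acquire a common left factor of $\lambda^{-1}dF$, so the identity reduces to the purely $u$-dependent claim
\begin{equation*}
(du^{-1})^T - \frac{n}{|du|^2}\,du = -n\,\K_u^{-2}\,S(g)\,(du^{-1})^T.
\end{equation*}
Expanding $S(g) = g - \tfrac{1}{n}\K_u^{2}\,I$ (using $\tr g = |du|^2/(\det du)^{2/n} = \K_u^2$), the key cancellation is
\begin{equation*}
g\,(du^{-1})^T = \frac{du\,du^T (du^{-1})^T}{(\det du)^{2/n}} = \frac{du}{(\det du)^{2/n}},
\end{equation*}
which combined with $\K_u^{-2} = (\det du)^{2/n}/|du|^2$ reproduces the left-hand side.

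I do not anticipate serious obstacles: everything is linear algebra driven by conformality and the definitions of $g$ and $\K_u$. The only bookkeeping point requiring care is the interaction between $du^{-1}$ and $du^T$ in (d) — specifically the collapse $du^T(du^{-1})^T = I$ — and making sure that $dF\,dF^T$ (not just $dF^T dF$) equals $\lambda I$, which holds because orientation-preserving conformal $dF$ is a positive scalar multiple of an orthogonal matrix.
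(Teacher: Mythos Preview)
Your proposal is correct and aligns with the paper's approach: the paper states only that these identities ``follow immediately from the definitions'' without giving details, and your direct pointwise computation from the chain rule and conformality relations is precisely the intended verification. In particular, your reduction of (d) to the identity $(du^{-1})^T - n\,du/|du|^2 = -n\K_u^{-2}S(g)(du^{-1})^T$ is exactly the relation the paper records later as \eqref{eqn:factoring}.
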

In a similar fashion we will be interested in compositions with conformal mappings from the right, i.e., $\tilde u = u\circ F$, for which we can show:
\begin{lemma}\label{lem: u circ F properties}
Let $u:\R^n\to\R^n$ be a diffeomorphism  and $F$ be an orientation preserving  conformal mapping. If we set   $\tilde u= u\circ F$
and denote by $\tilde \K$ and $\tilde g$ the corresponding dilation and distortion tensor, then
\begin{enumerate}\renewcommand{\labelenumi}{(\alph{enumi})}
\item $\tilde \K=\K_u$;
\item $\tilde g= g$;
\item $S(\tilde g)=S(g)$;
\item $(d\tilde u^{-1})^T-n \frac{d\tilde u}{|d\tilde u|^2}=-n \K_u^{-2} S(g) (du^{-1})^T(dF^T)^{-1}.$
\end{enumerate}
\end{lemma}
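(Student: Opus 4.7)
The plan is to reduce everything to the chain rule $d\tilde u(x) = du(F(x))\,dF(x)$ together with the conformality identities $dF^T dF = dF\,dF^T = \lambda I_n$ and $\det dF = \lambda^{n/2}$ (orientation preserving), which follow from Lemma \ref{lem:conformal properties}. All four assertions will drop out of short matrix computations once these facts are assembled.

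For (a), I first note that $|d\tilde u|^2 = \tr(dF^T du^T du\,dF) = \tr(du^T du\,dF\,dF^T) = \lambda|du|^2$, while $\det d\tilde u = \lambda^{n/2}\det du$, so the factors of $\lambda$ cancel in $\tilde\K = |d\tilde u|/(\det d\tilde u)^{1/n}$. For (b), the same cancellation gives
\[
\tilde g = \frac{d\tilde u\,d\tilde u^T}{(\det d\tilde u)^{2/n}} = \frac{du\,(dF\,dF^T)\,du^T}{\lambda(\det du)^{2/n}} = \frac{du\,du^T}{(\det du)^{2/n}} = g,
\]
and (c) is an immediate consequence of (b) and the definition of the Ahlfors operator.

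For (d), the main computational idea is to exploit the following general identity valid for \emph{any} orientation preserving diffeomorphism $u$:
\begin{equation}\label{eq:gen-id}
(du^{-1})^T - n\,\frac{du}{|du|^2} = -n\,\K_u^{-2}\,S(g)\,(du^{-1})^T.
\end{equation}
To see \eqref{eq:gen-id}, note that $g$ is symmetric with $\tr g = |du|^2/(\det du)^{2/n} = \K_u^2$, so $S(g) = g - (\K_u^2/n)I$, and the identity $g\,(du^{-1})^T = du/(\det du)^{2/n}$ turns the right-hand side into the left after multiplying out.

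Now I apply \eqref{eq:gen-id} to $\tilde u = u\circ F$. The chain rule gives $(d\tilde u^{-1})^T = (du^{-1})^T (dF^T)^{-1}$ and $d\tilde u = du\,dF$. Using (a) and (c) to replace $\tilde\K$ by $\K_u$ and $S(\tilde g)$ by $S(g)$, the right-hand side of \eqref{eq:gen-id} applied to $\tilde u$ becomes exactly
\[
-n\,\K_u^{-2}\,S(g)\,(du^{-1})^T (dF^T)^{-1},
\]
yielding (d). The only place that requires any care is bookkeeping of where each factor is evaluated (at $x$ versus $F(x)$ versus $u(F(x))$); since $F$ is a diffeomorphism this is purely a matter of notation and does not affect the algebraic manipulations. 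There is no genuine obstacle — the lemma is essentially a conformal-invariance computation, and the main point is simply to isolate the general identity \eqref{eq:gen-id} so that parts (a)--(c) can be fed into it to extract (d).
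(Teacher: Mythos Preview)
Your proof is correct and follows essentially the same approach implied by the paper, which omits the details and simply notes that the identities follow directly from the definitions; in particular, your identity \eqref{eq:gen-id} is exactly the paper's equation \eqref{eqn:factoring}, used in the same way.
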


%
%
\section{The Euler-Lagrange system}
Let $\Omega\subset \R^n$ be a bounded, smooth, open set and $u:\Omega\to \R^n$
a smooth, orientation preserving diffeomorphism
with $0<\ddu <\infty$.  For
 $1\le p\le \infty$,
we define, whenever the expression if finite,
$$\mathcal F_p (u,\Omega)= \frac{1}{|\Omega|} \int_{\Omega} \K_u^{np} dx .$$
For any $\psi\in C^{\infty}_0(\Omega,
R^n)$ we set 
$h(s):=\Fp(u+s\psi,\Omega)$
and compute
\begin{align}
\frac{d}{ds} h(s)\bigg|_{s=0}= & \frac{1}{|\Omega|}
\int_{\Omega} np (\ddu)^{-p} |du|^{np-2} du \cdot d\psi 
- p (\ddu)^{-p-1} \p_j \psi^i (\cofu)_{ij} |du|^{np} dx \notag \\
=&  \frac{1}{|\Omega|}p \int_{\Omega} \p_j \Bigg( \frac{ |du|^{np}}{ (\ddu)^{p+1}} (\cofu)_{ij} -
\frac{n |du|^{np-2}}{ |\ddu|^p} \p_j u^i\Bigg) \psi^i \, dx  \label{eqn:first variation}
\end{align}
where $\cofu$ denotes the cofactor matrix of $du$,  so that
$(\cofu)^T du= \ddu I$. Define the operator $L_p$ on $\R^n$-valued functions by 
\begin{align}
(L_pu)^i= & - p \p_j \bigg( \bigg[(du)^{ji} - n \frac{ (du)_{ij}}{|du|^2}\bigg] \frac{|du|^{np}}{(\ddu)^p}\bigg) \notag
=-p \p_j  \bigg(   du^{-1} \bigg[ I_n - n \frac{du du^T}{|du|^2}\bigg]  \frac{|du|^{np}}{(\ddu)^p} \bigg)_{ji} \\
&= np\p_j \Big[  \K_u^{np-2} \big(du^{-1} S(g)\big)_{ji} \Big] = np \p_j \Big[ \K_u^{np-2} S(g)_{\ell i} du^{j\ell}\Big], \label{eqn:Lp in terms of S and K}
\end{align}
where $du^{ij}$ denotes the $ij$ entry of the inverse of $du$, and $I_n$ is the $n\times n$ identity matrix, and
$\K_u$ is defined in \eqref{eqtn: analytic def},  
$g^{ij}$ by (\ref{eqn: g inverse}) and $S(g)$ by \eqref{eqn:Ahlfors operator}. Note that the equality of the first and third expressions 
in \eqref{eqn:Lp in terms of S and K} uses 
\begin{equation}\label{eqn:factoring}
(du^{-1})^T-n \frac{du}{|du|^2}= -n \K_u^{-2} S(g) (du^{-1})^T.
\end{equation}

We write $(L_p u)^i=\p_j A^i_j(du)$ where
\[
A^i_j(q)=-p\bigg[ q^{ji}-n \frac{q_{ij}}{|q|^2}\bigg] \frac{|q|^{np}}{(\det q)^p},
\] 
defined for any non-singular $n\times n$ matrix $q$.
Notice that $A_j^i(q)q_{ij}=0$. 
Set $A^{ik}_{j\ell }(q):=\frac{\p}{\p q_{k\ell }} A^i_j(q)$. Recalling that
\[
\p_{q_{k\ell }} (\cof q)_{ij}=\cof q_{k\ell } q^{ji} - \cof q_{i\ell } q^{jk} \qquad \text{ and } \qquad
\p_{q_{k\ell }} q^{ji}=- q^{\ell i} q^{jk},
\]
we compute
\begin{equation}
A^{ik}_{j\ell }(q)
=-p \frac{|q|^{np-2}}{(\det q)^p} \Bigg[ np (q_{k\ell }q^{ji}+q_{ij}q^{\ell k}) -n (np-2) \frac{q_{ij}q_{k\ell }}{|q|^2}
-|q|^2 (q^{\ell i}q^{jk} +p q^{\ell k} q^{ji}) - n \delta_{ki} \delta_{j\ell }\Bigg]. \label{eqn: A^{ik}_{jl}}
\end{equation}
For $C^2$ smooth maps with non-singular Jacobian, the operator $L_p$ can be expressed in non-divergence form:

\begin{equation}\label{non-div}(L_p u)^i = A^{ik}_{j\ell } (du) u_{j\ell}^k.\end{equation}

We remark that, in this form, the operator satisfies a  Legendre-Hadamard ellipticity condition. This result can be inferred by observing 
that the functional $\mathcal F_p (u,\Omega)$ is quasi-convex (it is actually polyconvex,  this is proved in 
\cite[Corollary 8.8.1]{iwaniec-martin}), and consequently, given sufficient smoothness, satisfies Legendre-Hadamard conditions. As we need explicit expressions for the constants involved, we provide the following estimates, 
whose elementary proof we omit.

\begin{lemma}\label{lem:Legendre-Hadamard ellipticity}
For  $n\geq 3$ and $p\geq 1$ or $n\geq 2$ and $p>1$ and for all non-singular  matrices $q$ and vectors $\xi, \eta \in \R^n$, we have
\begin{equation}\label{LHC}  C_1(n,p) p |\eta|^2 |\xi|^2 \frac{|q|^{np-2}}{(\det q)^p }\le  A^{ik}_{j\ell }(q) \eta_i \xi^j \eta_k\xi^\ell \le C_2(n) p^2 |\eta|^2 |\xi|^2 \bigg(\frac{|q|^{np-2}}{(\det q)^p}+\frac{|q|^{n(p+2)-2}}{(\det q)^{p+2}}\bigg) ,\end{equation}
where we can choose $C_1(n,p)=n$ for $n\ge 4$ and $p\ge 1$ and for $n\ge 3$ and $p>1$; $C_1(n,p)= \frac{6p-3}{p+1}$ if $n=3$ and $p\ge 1$ and
$C_1(n,p)=2\frac{p-1}{p+1}$ for $n=2$ and $p>1$.  The constant $C_2(n)$ does not depend on $p$ and can be chosen to be $C_2(n)=100n^3$.
\end{lemma}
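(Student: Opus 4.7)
The plan is to reduce the quartic contraction on the left of \eqref{LHC} to a quadratic form in two scalar invariants, and then apply elementary Cauchy-Schwarz-type bounds. Setting $\alpha := \eta^T q\xi = q_{ij}\eta_i\xi^j$ and $\beta := \xi^T q^{-1}\eta = q^{ji}\eta_i\xi^j$, a direct term-by-term contraction of the explicit formula \eqref{eqn: A^{ik}_{jl}} against $\eta_i\xi^j\eta_k\xi^\ell$ gives
\[
A^{ik}_{j\ell}(q)\,\eta_i\xi^j\eta_k\xi^\ell \;=\; p\,\frac{|q|^{np-2}}{(\det q)^p}\Bigl[\tfrac{n(np-2)}{|q|^2}\alpha^2 \;-\; 2np\,\alpha\beta \;+\; (p+1)|q|^2\beta^2 \;+\; n|\eta|^2|\xi|^2\Bigr].
\]
The first three terms form a symmetric quadratic form $Q(\alpha,\beta)$ in the invariants; the fourth is a positive baseline.

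For the lower bound, I would complete the square in $\alpha$:
\[
Q(\alpha,\beta) \;=\; \tfrac{n(np-2)}{|q|^2}\Bigl(\alpha - \tfrac{p|q|^2\beta}{np-2}\Bigr)^2 \;+\; \tfrac{p(n-2)-2}{np-2}\,|q|^2\beta^2.
\]
When $p(n-2)\geq 2$ (which holds for $n\geq 4,\,p\geq 1$ and for $n=3,\,p\geq 2$), the residual is non-negative, so $Q\geq 0$ and the bracket is bounded below by $n|\eta|^2|\xi|^2$, giving \eqref{LHC} with $C_1=n$. For the borderline cases $n=3,\,p\geq 1$ and $n=2,\,p>1$, the residual coefficient is negative, and I would instead minimize $Q$ over $\alpha$ first, obtaining $Q(\alpha,\beta)\geq \tfrac{n(p(n-2)-2)}{p+1}\cdot\tfrac{\alpha^2}{|q|^2}$, and then use the Cauchy-Schwarz bound $\alpha^2 \leq |q|^2|\eta|^2|\xi|^2$ to absorb the (now negative) deficit into the baseline $n|\eta|^2|\xi|^2$. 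This yields the smaller explicit constants $C_1(3,p)=(6p-3)/(p+1)$ and $C_1(2,p)=2(p-1)/(p+1)$.

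For the upper bound, I would estimate each of the four terms in the bracket separately, using $|\alpha|\leq |q||\eta||\xi|$ and $|\beta|\leq c(n)\,|q|^{n-1}|\eta||\xi|/\det q$; the latter follows from the identity $q^{-1}=(\cof q)^T/\det q$ together with the Hilbert-Schmidt estimate $|\cof q|\leq c(n)|q|^{n-1}$. The $\alpha^2/|q|^2$ and baseline terms contribute to $|q|^{np-2}/(\det q)^p$, the $|q|^2\beta^2$ term to $|q|^{n(p+2)-2}/(\det q)^{p+2}$, and the cross term $\alpha\beta$ to the geometric mean $|q|^{np+n-2}/(\det q)^{p+1}$, which AM-GM splits as at most one half of the sum of the two listed quantities. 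Collecting contributions with generous constants produces the upper bound with $C_2(n)=100\,n^3$.

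The main obstacle is the borderline lower-bound range $n\in\{2,3\}$, where completion of squares alone produces an indefinite quadratic form; extracting the correct $p$-dependence of $C_1$ requires carefully balancing the residual quadratic against the baseline via the Cauchy-Schwarz bound on $\alpha$, and checking that no constraint stronger than $\alpha^2\leq |q|^2|\eta|^2|\xi|^2$ is needed to close the estimate.
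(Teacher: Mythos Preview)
The paper omits this proof entirely (``whose elementary proof we omit''), so there is no argument to compare against; what matters is whether your sketch is correct, and it is.

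Your contraction of \eqref{eqn: A^{ik}_{jl}} against $\eta_i\xi^j\eta_k\xi^\ell$ to the quadratic form in the two scalar invariants $\alpha=\eta^Tq\xi$ and $\beta=\xi^Tq^{-1}\eta$ is right. For the lower bound in the stable range $p(n-2)\ge 2$, completing the square in $\alpha$ leaves residual coefficient $\tfrac{p(n-2)-2}{np-2}\ge 0$, so $Q\ge 0$ and $C_1=n$ follows. In the borderline range your displayed inequality
\[
Q(\alpha,\beta)\;\ge\;\frac{n\bigl(p(n-2)-2\bigr)}{p+1}\cdot\frac{\alpha^2}{|q|^2}
\]
is exactly what one gets by completing the square in $\beta$ (you wrote ``minimize $Q$ over $\alpha$'', which is a slip --- minimizing over $\alpha$ would leave a function of $\beta$, not of $\alpha$; your formula is the $\beta$-minimization). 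Then the Cauchy--Schwarz bound $\alpha^2\le|q|^2|\eta|^2|\xi|^2$ absorbs the negative residual into the baseline $n|\eta|^2|\xi|^2$, producing precisely $C_1=n\cdot\tfrac{p(n-1)-1}{p+1}$, i.e.\ $(6p-3)/(p+1)$ for $n=3$ and $2(p-1)/(p+1)$ for $n=2$. Note your argument does not yield the paper's additional claim that $C_1=n$ for $n=3$, $1<p<2$; your value $(6p-3)/(p+1)$ is strictly smaller there, but this is the constant the paper also lists for $n=3$.

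For the upper bound, the singular-value estimate $|q^{-1}|\le \sqrt n\,(n-1)^{-(n-1)/2}|q|^{n-1}/\det q$ gives a dimensional constant $c(n)\le\sqrt 2$ in your bound on $|\beta|$, so after the AM--GM split of the cross term the collected coefficient is well under $100n^3\,p^2$.
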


\begin{rmrk}
The operator $L_p$ does not satisfy the stronger ellipticity condition $\Lambda |\eta|^2\ge A^{ik}_{j\ell } \eta_{ij} \eta_{k\ell }\ge \lambda |\eta|^2$. \end{rmrk}

As the dilation functional is invariant under the action of conformal mappings
(i.e., $\mathcal F_p (u,\Omega)=\mathcal F_p (F(u),\Omega)$ for all conformal mappings $F:\R^n\to\R^n$ that map $\Omega$ into itself), we 
can expect a corresponding invariance for the solutions of $L_p u=0$. 

\begin{prop}\label{Proposition conformal invariance} Let $u:\Om\to\R^n$ be a orientation preserving  diffeomorphism.
\begin{enumerate} \renewcommand{\labelenumi}{(\roman{enumi})}
\item  If $F:\R^n\to\R^n$ is
a conformal map and $\tu=F(u)$, then
$$(\tL_p \tu )^i=\bigg( [ dF^{-1}|_u]^T L_p u \bigg)^i,$$
where 
$$(\tL_p u)^i=-p \p_j  \bigg(   du^{-1} \bigg[ I_n - n \frac{du du^T}{|du|^2}\bigg]  \frac{|du|^{np}}{(\pm\ddu )^p}
\bigg)_{ji} ,$$
with the sign in the denominator being $+1$ if $F$ is orientation preserving and $-1$ otherwise. 
\item  If $F:\Om\to\Om$ a composition of  dilations, translations, and the inversion $x\mapsto x/|x|^2$, then $v = u\circ F$ satisfies
\[
(\tL_p v)^i =  (L_p u)^i \big|_F.
\]
\end{enumerate}
\end{prop}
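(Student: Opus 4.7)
The approach is a direct chain-rule computation, organized around the divergence form $(L_p u)^i = -p\,\p_j N_{ji}$ from (3.2), where
\[
N_u \;:=\; du^{-1}\Bigl[I_n - n\tfrac{du\,du^T}{|du|^2}\Bigr]\,\tfrac{|du|^{np}}{(\det du)^p},
\]
together with Lemmas~\ref{lem: F circ u properties}--\ref{lem: u circ F properties} and the Ahlfors factorization (3.3). For each part, the plan is to express the corresponding matrix $N$ for the composed map as a product involving $N_u$ and a piece built from $dF$, apply Leibniz to $\p_j$, and identify a main term plus a residual that must vanish via the symmetric-trace-free structure of $S(g)$ combined with the conformal structure of $F$.

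For part (i), starting from $d\tilde u = (dF|_u)\,du$ and the conformality identity $(dF)^{-1} = \lambda^{-1}(dF)^T$ with $\lambda = |dF|^2/n$, a short calculation using Lemma~\ref{lem: F circ u properties} yields $|d\tilde u|^{np}/(\pm\det d\tilde u)^p = \K_u^{np}$ (with the prescribed sign handling orientation) and the matrix identity $N_{\tilde u} = N_u \cdot (dF|_u)^{-1}$. Applying $-p\,\p_j$ with Leibniz: the piece in which $\p_j$ hits $N_u$ reproduces the main term $[(dF|_u)^{-1}]^T L_p u$ by index transposition; the other piece, after the chain rule $\p_j(dF^{-1})_{ki}|_u = [\p_\ell (dF^{-1})_{ki}]|_u (du)_{\ell j}$, contracts $(du)_{\ell j}(N_u)_{jk}$ which by (3.3) (i.e. $(du^{-1})^T - n\,du/|du|^2 = -n\K_u^{-2}S(g)(du^{-1})^T$) collapses to $-n\K_u^{np-2}S(g)_{\ell k}$. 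The residual is therefore a pairing of $S(g)$ with $\p_\ell(dF^{-1})_{ki}$ in $(k,\ell)$. I would finish by noting that since $S(g)$ is symmetric and trace-free, only the symmetric trace-free part (in $(k,\ell)$) of $\p_\ell(dF^{-1})_{ki}$ contributes; this part is zero for each Möbius generator: trivially for translations, dilations, rotations (constant $dF^{-1}$), and for the inversion $F(x)=x/|x|^2$ (where $(dF^{-1})_{ki}=|x|^2\delta_{ki}-2x_kx_i$) by direct expansion, which gives symmetric part $-2x_i\delta_{k\ell}$---a pure trace annihilated by $S(g)$.

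Part (ii) follows by the parallel computation using Lemma~\ref{lem: u circ F properties}, where $\tilde g = g\circ F$ has no $dF$-twist, yielding $N_v = (dF)^{-1}\,(N_u\circ F)$. Leibniz on $-p\,\p_j$ splits as: a main term $(L_p u)^i|_F$, which emerges through the contraction $((dF)^{-1})_{jk}(dF)_{\ell j} = \delta_{\ell k}$ applied after the chain rule on $N_u\circ F$; plus a residual $-p\,\p_j((dF)^{-1})_{jk}\,(N_u)_{ki}|_F$. The residual vanishes trivially for translations, dilations, and rotations (constant $dF^{-1}$); for the inversion one uses $\p_j((dF)^{-1})_{jk} = -2nx_k$ together with the factorized form $N_u = -n\K_u^{np-2}S(g)(du^{-1})^T$ from (3.3) and the reciprocal relation $x = F(x)/|F(x)|^2$ characterizing the inversion, to produce the cancellation.

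The main obstacle I anticipate is the inversion case, particularly in part~(ii): unlike in part~(i), the residual is not immediately of symmetric-trace-free form, and its vanishing rests on a subtle interplay between the Möbius-specific expansion of $\p_j((dF)^{-1})_{jk}$, the Ahlfors factorization of $N_u$, and the involutive relation $F\circ F = \mathrm{id}$ linking the base variable $x$ to the evaluation point $F(x)$. Part~(i) is more transparent: the key observation, a direct consequence of Liouville's rigidity of conformal maps in $\R^n$, is that for every conformal $F$ the symmetric trace-free part in $(k,\ell)$ of $\p_\ell(dF^{-1})_{ki}$ vanishes, which is exactly what is needed to annihilate the pairing with $S(g)$.
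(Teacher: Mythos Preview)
The paper does not supply a proof of this proposition, so there is nothing to compare against directly; what follows is an assessment of your argument on its own terms.

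Your treatment of part~(i) is essentially correct. The factorization $N_{\tilde u}=N_u\,(dF|_u)^{-1}$ holds, and the residual is indeed the contraction of the symmetric trace-free matrix $S(g)$ against $\p_\ell(dF^{-1})_{ki}$ in $(k,\ell)$. One refinement: the vanishing of the symmetric trace-free part of $\p_\ell(dF^{-1})_{ki}$ is a consequence of conformality alone, not of Liouville rigidity. Differentiating $(dF)^T dF=\lambda I$ and combining the three cyclic permutations yields the Christoffel-type identity $\p_k\p_\ell F^i=\sigma_k\p_\ell F^i+\sigma_\ell\p_k F^i-\delta_{k\ell}\,\sigma_m\p_m F^i$ with $\sigma=\tfrac12\nabla\log\lambda$; inserting this into $\p_\ell H_{ki}$ with $H=\lambda^{-1}(dF)^T$ collapses the $(k,\ell)$-symmetrization to a pure trace. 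This covers all conformal $F$ at once (including arbitrary biholomorphisms when $n=2$), so there is no need to check generators and then worry about compositions.

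Part~(ii) is where your proposal has a genuine gap, and it is not one you can close. For affine $F$ (translations, dilations, rotations) your Leibniz argument is fine, since $(dF)^{-1}$ is constant and the residual vanishes. For the inversion, however, the ``cancellation'' you invoke does not occur. Take $n=2$, $u(y)=(ay_1,by_2)$ with $a\neq b$ (so $L_p u\equiv 0$) and $F(x)=x/|x|^2$. Your own residual formula with $\p_j((dF)^{-1})_{jk}=-2nx_k$ gives
\[
(\tilde L_p v)^1=\frac{4p(b^2-a^2)}{a(a^2+b^2)}\Big(\frac{a^2+b^2}{ab}\Big)^{p}x_1\neq 0=(L_p u)^1\big|_F,
\]
and a direct computation of $\tilde L_p v$ from the definition confirms this. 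The reciprocal relation $x=F(x)/|F(x)|^2$ carries no algebraic cancellation here: the residual is $2np\,x_k(N_u|_F)_{ki}$, a pairing of $x$ with a matrix depending only on $du|_F$, and nothing forces it to vanish. This is consistent with the Remark following the proposition, which says that in the plane part~(ii) fails except for \emph{linear} (anti\nobreakdash-)holomorphic $F$; the inversion is not linear. So the stated pointwise identity in (ii) cannot be established for the inversion by your method, and the counterexample above indicates it does not hold as written.
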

\begin{rmrk} Case \emph{(i)} holds for all conformal mappings, including in $n=2$ all invertible holomorphic and anti-holomorphic functions. In contrast, case  \emph{(ii)} only applies to the given set of conformal transformations, as in the plane it fails to hold except for linear invertible holomorphic and anti-holomorphic functions. 
\end{rmrk}

%
%

\section{The Aronsson-Euler-Lagrange system and the operator $L_\infty$}\label{sec:L^infty}

In this section we assume that for each $p>1$ we have a solution $u_p$ of the PDE 
\begin{equation}\label{eqn:L^p=0} L_p u_p =0, \text{ in }\Om,
\end{equation}
and that $u_p\to u_{\infty}$ in $C^2$ norm on subcompacts of $\Om$. Our goal is to formally derive a system of PDE for $u_{\infty}$.

 Observe that
$\p_j |du|^{np}= np|du|^{np-2} u^k_\ell u^k_{\ell j}$ and
$\p_j |\ddu|^{-p-1} = -(p+1) |\ddu|^{-p-2} (\cof du)_{k\ell} u^k_{\ell j}$.
Using the fact that $\p_j (\cofu)_{ij}=0$ for $i=1,\dots,n$, we compute
\begin{multline}\label{eqn:L^p u expression for asymptotics}
(L_p u)^i =- p \frac{|du|^{np-4}}{|\ddu|^p} \bigg\{ 
np\frac{|du|^2}{\ddu} \Big( u^k_\ell (\cof du)_{ij} + u^i_j (\cof du)_{k\ell} \Big) \\
- (p+1) \Big(\frac{ |du|^2}{\ddu}\Big)^2 (\cof du)_{k\ell} (\cof du)_{ij}
-n(np-2)  u^k_\ell u^i_j - n|du|^2\delta_{\ell j} \delta_{ik} \bigg\} u^k_{\ell j}
\end{multline}
for all $i=1,\dots,n$.

Dividing the expression above by $p^2  \frac{|du|^{np-4}}{(\ddu)^p}$ and letting $p\to \infty$, we obtain that equation 
\eqref{eqn:L^p=0} formally converges to 
$$L_{\infty} u_{\infty}=0,$$
where 
\begin{align}
(L_{\infty} u)^i 
&= - \bigg[
 n \frac{|du|^2}{\ddu} \bigg( (\cofu)_{ij}  u^k_\ell +(\cofu)_{k\ell}  u^i_j \bigg) 
-  \bigg(\frac{|du|^2}{\ddu} \bigg)^2 (\cofu)_{ij}(\cofu)_{k\ell} 
-n^2 u^i_j u^k_\ell \bigg] u^k_{\ell j} \nonumber\\
&= (ndu_{ij}-|du|^2 du^{ji}) (n du_{k \ell}-|du|^2du^{\ell k}) \p_j du_{k\ell} .\label{eqn:L_infty factored}
\end{align}
Observe that the system does not satisfy the Legendre-Hadamard conditions. 

\begin{prop}\label{prop: example} 
\begin{enumerate}
\item Let $u(x) = |x|^{\alpha-1}x$ where $\alpha\in\R$ and $\alpha\neq 0$. Then
\[
L_\infty u(x) =0
\]
and
\[
L_p u(x) = - \bigg( \frac{n+\alpha^2-1}{\alpha^2}\bigg)^{\frac{np}2} \frac{n(\alpha^2-1)(n-1)}{(n+\alpha^2-1)\alpha} \frac{x}{|x|^{\alpha+1}},
\]
away from the origin.
\item If $u(r,\theta,z)$ is defined by (\ref{eqn:wedge L_infty example}) from Corollary \ref{cor:L_infty examples}, then
$L_\infty u=0$ in the set $r\neq 0$.
\end{enumerate}
\end{prop}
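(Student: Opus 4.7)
The plan is to verify both parts by direct computation, leveraging the key observation that both mappings have constant trace dilation $\K_u$ on their set of smoothness, so that $L_\infty u=0$ follows immediately from $\nabla \K_u=0$ and the representation \eqref{eqn:L infty in terms of S and K}.

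For part (1), I first compute
\[
du_{ij}=|x|^{\alpha-1}\delta_{ij}+(\alpha-1)|x|^{\alpha-3}x_ix_j,
\]
a symmetric matrix with a radial eigenvalue $\alpha|x|^{\alpha-1}$ along $x/|x|$ and a tangential eigenvalue $|x|^{\alpha-1}$ of multiplicity $n-1$. This immediately gives $\det du=\alpha|x|^{n(\alpha-1)}$, $|du|^2=(\alpha^2+n-1)|x|^{2(\alpha-1)}$, and hence $\K_u$ is a constant depending only on $n$ and $\alpha$, yielding $L_\infty u=0$. To obtain the explicit formula for $L_pu$, I use \eqref{eqn:Lp in terms of S and K} in the form
\[
(L_pu)^i=-p\,\partial_j\!\left(\left[(du^{-1})_{ji}-n\frac{(du)_{ij}}{|du|^2}\right]\K_u^{np}\right)
\]
and pull the constant $\K_u^{np}$ outside the derivative. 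Inverting $du$ by Sherman--Morrison yields $(du^{-1})_{ij}=|x|^{1-\alpha}\delta_{ij}+\tfrac{1-\alpha}{\alpha}|x|^{-\alpha-1}x_ix_j$, and the bracketed expression reduces to $\tfrac{\alpha^2-1}{n+\alpha^2-1}\bigl(|x|^{1-\alpha}\delta_{ij}-\tfrac{n+\alpha-1}{\alpha}|x|^{-\alpha-1}x_ix_j\bigr)$. Applying $\partial_j$ via $\partial_j|x|^s=s|x|^{s-2}x_j$ together with the algebraic identity $\alpha(1-\alpha)-(n+\alpha-1)(n-\alpha)=-n(n-1)$ delivers the asserted scalar multiple of $x/|x|^{\alpha+1}$.

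For part (2), I argue separately on each open wedge, where $u$ is smooth. In $\{0<\theta<\alpha\}$, setting $z=x_1+ix_2$ and $k=\pi/\alpha$, the planar part of $u$ is given by $z\mapsto z^k|z|^{1-k}$, extended by the identity on the remaining $n-2$ coordinates. A direct computation of $\partial_z u$ and $\partial_{\bar z}u$ — equivalently, reading off the singular values $k,1,\ldots,1$ of $du$ — gives $\det du=k$ and $|du|^2=k^2+n-1$, both independent of the point. Hence $\K_u$ is constant on the wedge and $L_\infty u=0$ follows as in part (1); the identical argument with $k=\pi/(2\pi-\alpha)$ handles the complementary wedge.

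The main obstacle is the bookkeeping in the $L_p$ calculation for part (1): tracking the cancellations that reduce the divergence to a clean scalar multiple of $x_i/|x|^{\alpha+1}$ requires some care, but once the matrix algebra is organized in the natural basis $\{\delta_{ij},x_ix_j\}$ for radial tensors the remaining calculation is mechanical.
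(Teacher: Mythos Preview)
Your proposal is correct and follows essentially the same approach as the paper. Both arguments proceed by direct computation, observing that $\K_u$ is constant on the domain of smoothness (so $L_\infty u=0$ via \eqref{eqn:L infty in terms of S and K}) and, for part (1), computing the divergence explicitly; the paper records $\K_u^2$ and $S(g)_{ij}$ and then appeals to the definitions, while you spell out the eigenvalue structure of $du$, the Sherman--Morrison inverse, and the divergence in the radial-tensor basis, but the content is the same.
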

\begin{proof}
For (1), direct computation yields $\K_u(x)^2= \frac{n+\al^2-1}{\al^{2/n}}$ for all $x\neq 0$
and $$S(g)_{ij}= \frac{\al^2-1}{\al^{2/n}} \bigg(\frac{x_ix_j}{|x|^2}-\frac{\delta_{ij}}{n}\bigg).$$
The proof follows from these identities and from the definition of $L_\infty$ and $L_p$.

For (2), for the case $0\leq\theta\leq \alpha$, we have  $\det du = \pi/\alpha$ and $|du|^2 = (n-1)+\pi^2/\alpha^2$. 
The computation in the $\alpha<\theta<2\pi$ case is similar.
\end{proof}

Note that $u(x) = |x|^{\alpha-1}x$ is conformal exactly when $\alpha = \pm 1$, the only cases for which $L_p u=0$.

\section{Extremal mappings and the equation $L_{\infty}u=0$}

In this section we establish some analogues of Aronsson's results in \cite[Section 3]{aronsson-3}. 

\begin{lemma} If $u\in C^2(\Om,\R^n)$, then 
\begin{equation}\label{equation: constant along flow lines}(L_\infty u)^i = \frac{n^2  |du|^4}{\K_u^3} \big( S(g) du^{-1,T}\big)_{ij} \p_{x_j} \K_u.
\end{equation}

\end{lemma}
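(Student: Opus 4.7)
The plan is to start from the factored expression for $L_\infty u$ derived earlier in \eqref{eqn:L_infty factored},
\[
(L_\infty u)^i = (n du_{ij} - |du|^2 du^{ji})(n du_{k\ell} - |du|^2 du^{\ell k}) \p_j du_{k\ell},
\]
and rewrite each of the two factors of the form $n du_{\cdot\cdot} - |du|^2 du^{\cdot\cdot}$ in terms of $S(g) du^{-1,T}$ via the identity \eqref{eqn:factoring}. Multiplying \eqref{eqn:factoring} by $|du|^2$ immediately gives the componentwise relation
\[
n du_{ab} - |du|^2 du^{ba} = n \K_u^{-2} |du|^2 \big(S(g) du^{-1,T}\big)_{ab},
\]
so after substituting this for both factors, we obtain
\[
(L_\infty u)^i = n^2 \K_u^{-4} |du|^4 \big(S(g) du^{-1,T}\big)_{ij} \big(S(g) du^{-1,T}\big)_{k\ell} \p_j du_{k\ell}.
\]

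The remaining task is to recognize the contraction $\big(S(g) du^{-1,T}\big)_{k\ell} \p_j du_{k\ell}$ as (a multiple of) $\p_j \K_u$. First I would differentiate $\K_u^2 = |du|^2 (\det du)^{-2/n}$, applying Jacobi's formula $\p_j \det du = (\det du)\, du^{\ell k} \p_j du_{k\ell}$, to obtain
\[
\p_j \K_u^2 = \tfrac{2}{n}(\det du)^{-2/n}\big[n\, du_{k\ell} - |du|^2 du^{\ell k}\big] \p_j du_{k\ell}.
\]
Plugging in the same rewriting $n du_{k\ell} - |du|^2 du^{\ell k} = n \K_u^{-2} |du|^2 (S(g) du^{-1,T})_{k\ell}$ and using $|du|^2 (\det du)^{-2/n} = \K_u^2$, the $\K_u$ factors collapse and I am left with
\[
2 \K_u \p_j \K_u = \p_j \K_u^2 = 2 \big(S(g) du^{-1,T}\big)_{k\ell} \p_j du_{k\ell}.
\]

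Substituting this back yields precisely
\[
(L_\infty u)^i = \frac{n^2 |du|^4}{\K_u^3}\big(S(g) du^{-1,T}\big)_{ij}\p_{x_j}\K_u,
\]
as claimed. The argument is essentially bookkeeping, so there is no serious obstacle; the only place requiring care is the accounting of the factor $n$ when differentiating $(\det du)^{-2/n}$ and combining it with Jacobi's formula so that the bracket $n du_{k\ell} - |du|^2 du^{\ell k}$ emerges cleanly and matches the factor already present in $(L_\infty u)^i$.
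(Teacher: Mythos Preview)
Your proof is correct and follows essentially the same route as the paper: both arguments substitute \eqref{eqn:factoring} into the factored form \eqref{eqn:L_infty factored} and then identify $(S(g)du^{-1,T})_{k\ell}\p_j du_{k\ell}$ with $\K_u\,\p_{x_j}\K_u$. The only cosmetic difference is that the paper records this last identity as $\p_{q_{k\ell}}\K_u = \K_u^{-1}(S(g)du^{-1,T})_{k\ell}$ and invokes the chain rule, whereas you differentiate $\K_u^2$ directly; the content is identical.
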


\begin{proof} Observe that
\begin{equation}\label{derivatives of K}
\p_{q_{ij}} \K_u = \frac{1}{n} \bigg( n \frac{du_{ij}}{|du|^2} - du^{ji}\bigg) \K_u=\K_u^{-1}\big(S(g) du^{-1,T}\big)_{ij},
\end{equation}
and $\p_{x_j} \K_u = (\p_{q_{k\ell}}\K_u) u^{k}_{j\ell}$. The result follows quickly from \eqref{eqn:factoring} and \eqref{eqn:L_infty factored}.
\end{proof}

The following proposition on conformal invariance of solutions of $L_\infty u$ follows immediately from combining previous lemma with
Lemma \ref{lem: F circ u properties} and Lemma \ref{lem: u circ F properties}.
\begin{prop}
The set of $C^2$ solutions of $L_{\infty} u=0$ is invariant by  transformations $\tilde u=F\circ u$ and $v = u\circ F$ with $F$ conformal.
\end{prop}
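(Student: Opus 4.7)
The plan is to reduce the equation $L_\infty u = 0$ to the pointwise vector identity $S(g)\,du^{-1,T}\,\nabla\K_u = 0$ via \eqref{equation: constant along flow lines}, and then apply the transformation rules in Lemmas~\ref{lem: F circ u properties} and~\ref{lem: u circ F properties} to verify that this condition is preserved under left and right composition with conformal maps.

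First I would observe that since $u$ is an orientation-preserving $C^2$ diffeomorphism, $|du|$ and $\K_u$ are strictly positive, so \eqref{equation: constant along flow lines} shows that $L_\infty u=0$ is equivalent to
\[
S(g)\,du^{-1,T}\,\nabla\K_u = 0
\]
as an identity of vectors in $\R^n$. Thus the proposition reduces to showing that this vector transforms under the two composition rules in a way whose zero set is preserved.

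For $\tilde u = F\circ u$, I would invoke Lemma~\ref{lem: F circ u properties}(a), (c): $\tilde\K = \K_u$ and $S(\tilde g) = \lambda^{-1}\, dF\, S(g)\, dF^T$. The chain rule gives $d\tilde u^{-1,T} = (dF^{-1}|_u)^T\, du^{-1,T}$, so using $dF^T(dF^{-1})^T = I$ and $\nabla\tilde\K = \nabla\K_u$,
\[
S(\tilde g)\,d\tilde u^{-1,T}\,\nabla\tilde\K \;=\; \lambda^{-1}\, dF\cdot\bigl[S(g)\,du^{-1,T}\,\nabla\K_u\bigr].
\]
Invertibility of $dF$ and positivity of $\lambda$ then give the invariance. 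For $v = u\circ F$, by Lemma~\ref{lem: u circ F properties}(a), (c), $\tilde\K$ and $S(\tilde g)$ are the precompositions of $\K_u$ and $S(g)$ with $F$, while $dv^{-1,T}(y) = du^{-1,T}|_{F(y)}\cdot dF^{-1,T}(y)$, and the chain rule on $\K_u\circ F$ gives $\nabla\tilde\K(y) = dF(y)^T\,\nabla\K_u|_{F(y)}$. The factor $dF^{-1,T}\cdot dF^T$ cancels to the identity, so
\[
S(\tilde g)\,dv^{-1,T}\,\nabla\tilde\K \;=\; \bigl[S(g)\,du^{-1,T}\,\nabla\K_u\bigr]\circ F,
\]
and again the zero sets agree.

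The only real difficulty here is careful bookkeeping—tracking whether each quantity is evaluated at $x$, $u(x)$, or $F(y)$—but the transformation laws in the two lemmas are packaged precisely so that the $dF$ and $dF^{-1}$ factors cancel in matched pairs, making the proof a direct verification with no genuine obstacle.
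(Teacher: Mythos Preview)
Your proof is correct and is exactly the approach the paper takes: the paper's one-line proof simply says the proposition ``follows immediately from combining [the] previous lemma with Lemma~\ref{lem: F circ u properties} and Lemma~\ref{lem: u circ F properties},'' and you have carried out precisely that combination, supplying the matrix algebra that makes the cancellation explicit. Your interpretation of Lemma~\ref{lem: u circ F properties}(a),(c) as asserting equality after precomposition with $F$ is the correct reading, and the computations check out.
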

\begin{cor} In the plane any Teichm\"uller map of the form $u:=\psi\circ v \circ \phi^{-1}$
with $\psi,\phi$ conformal and $v$ affine is a solution of $L_{\infty}u=0$.
\end{cor}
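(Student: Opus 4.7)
The proof should be essentially a direct assembly of two ingredients already supplied in the excerpt, so my plan is to keep it short and conceptual rather than computational.

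First, I would verify the simplest case: an affine map $v$ solves $L_\infty v = 0$. This has two equally easy derivations, and I would mention the quicker of the two. Since $dv$ is a constant matrix, $\partial_j dv_{k\ell} \equiv 0$, and the factored expression
\begin{equation*}
(L_\infty v)^i = (n\, dv_{ij} - |dv|^2 dv^{ji})(n\, dv_{k\ell} - |dv|^2 dv^{\ell k})\, \partial_j dv_{k\ell}
\end{equation*}
from \eqref{eqn:L_infty factored} vanishes identically. (Equivalently, $\K_v$ is constant, so the flow-line formulation \eqref{equation: constant along flow lines} gives the same conclusion.)

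Next, I would invoke the conformal invariance proposition stated just above the corollary: the class of $C^2$ solutions of $L_\infty u = 0$ is closed under both pre- and post-composition with conformal maps. In the plane, holomorphic (or anti-holomorphic) diffeomorphisms are conformal, so $\phi$ and $\psi$ qualify. I would apply the invariance in two steps — first pass from $v$ to $v \circ \phi^{-1}$ (right composition with the conformal map $\phi^{-1}$), then from $v \circ \phi^{-1}$ to $\psi \circ (v \circ \phi^{-1}) = u$ (left composition with $\psi$) — concluding $L_\infty u = 0$ on the domain where $u$ is defined and $C^2$.

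There is really no main obstacle here; the only thing to be mildly careful about is that the invariance proposition is stated for diffeomorphisms and $C^2$ regularity, so I would note at the start that a Teichm\"uller map of the form $\psi \circ v \circ \phi^{-1}$ with $\psi,\phi$ conformal diffeomorphisms and $v$ affine is automatically a $C^2$ orientation-preserving diffeomorphism wherever the composition is defined, so the hypotheses of the invariance proposition are satisfied and the chain of implications goes through without modification.
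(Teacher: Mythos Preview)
Your proposal is correct and is exactly the argument the paper intends: the corollary is placed immediately after the conformal-invariance proposition with no separate proof, because it follows by observing that affine $v$ has constant $dv$ (hence $L_\infty v=0$ by \eqref{eqn:L_infty factored} or \eqref{equation: constant along flow lines}) and then applying the invariance under pre- and post-composition with conformal maps. Your two-step application of the invariance and the brief regularity remark are appropriate and match the paper's implicit reasoning.
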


\begin{lemma}\label{lemma: bound on S(g) in terms of K}
If $\K_u = K_0>\sqrt n$, then there exists $\e = \e(K_0)>0$ so that 
\[
\e
\leq  |S(g)|^2 \leq 
\K_u^4 \Big(1-\frac{1}{n} \Big).
\]
\end{lemma}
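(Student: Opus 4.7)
The plan is to work in the eigenbasis of the distortion tensor. Since $g = du\,du^T/(\det du)^{2/n}$ is symmetric and positive definite, it has positive eigenvalues $\lambda_1,\dots,\lambda_n$, and the definition immediately gives two structural constraints: $\prod_i \lambda_i = \det g = 1$ and $\sum_i \lambda_i = \tr g = |du|^2/(\det du)^{2/n} = \K_u^2$. In this basis $S(g) = g - \tfrac{\tr g}{n}I$ is diagonal with entries $\lambda_i - \K_u^2/n$, so a one-line expansion gives the key identity
\[
|S(g)|^2 \;=\; \sum_{i=1}^n\Bigl(\lambda_i - \tfrac{\K_u^2}{n}\Bigr)^2 \;=\; \sum_{i=1}^n \lambda_i^2 \;-\; \frac{\K_u^4}{n}.
\]

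The upper bound then becomes elementary: because $\lambda_i>0$, one has $\sum_i \lambda_i^2 \le (\sum_i \lambda_i)^2 = \K_u^4$, so the identity above yields
\[
|S(g)|^2 \;\le\; \K_u^4\Bigl(1-\tfrac{1}{n}\Bigr),
\]
as claimed. (Equality would require at most one eigenvalue to be nonzero, which is incompatible with $\det g = 1$.)

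For the lower bound I would reformulate the claim as: the continuous function $\Phi(\lambda) = \sum_i \lambda_i^2 - K_0^4/n$ has a strictly positive minimum on the set
\[
\Sigma_{K_0} \;=\; \Bigl\{\lambda \in \R_{>0}^n : \textstyle\sum_i \lambda_i = K_0^2,\ \prod_i \lambda_i = 1\Bigr\}.
\]
The first step is to check that $\Sigma_{K_0}$ is compact: the sum constraint gives $\lambda_i \le K_0^2$, and then the product constraint forces $\lambda_i = 1/\prod_{j\ne i}\lambda_j \ge K_0^{-2(n-1)}$, so each coordinate is trapped in a fixed closed interval. By continuity $\Phi$ attains its infimum $\e(K_0)$ on this compact set, and $\Phi(\lambda)=0$ forces $\lambda_i \equiv K_0^2/n$, which combined with $\prod \lambda_i=1$ gives $K_0^2 = n$, contradicting the hypothesis $K_0 > \sqrt n$. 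Hence $\e(K_0)>0$.

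I expect no serious obstacle: the argument is almost entirely compactness and the equality case in the Cauchy--Schwarz-type inequality $\sum \lambda_i^2 \le (\sum \lambda_i)^2$. The only point that needs care is the uniform lower bound on the eigenvalues that yields compactness of $\Sigma_{K_0}$; one could alternatively run a Lagrange multiplier analysis to show that any critical point of $\Phi$ on $\Sigma_{K_0}$ has $\lambda_i$ taking at most two distinct values, which would even give an explicit formula for $\e(K_0)$, though this is not needed here.
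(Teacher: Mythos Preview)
Your proof is correct and follows essentially the same approach as the paper: both diagonalize $g$, use the identity $|S(g)|^2 = \sum_i \lambda_i^2 - \tfrac{1}{n}(\sum_i \lambda_i)^2$ together with the constraints $\sum_i \lambda_i = \K_u^2$ and $\prod_i \lambda_i = 1$, and obtain the lower bound from the fact that $|S(g)|^2 = 0$ forces all $\lambda_i$ equal, hence $\K_u^2 = n$. The only difference is that you establish compactness of the constraint set directly via the explicit bounds $K_0^{-2(n-1)} \le \lambda_i \le K_0^2$, whereas the paper runs a sequential contradiction argument splitting into the cases where $\bar\lambda^k = \K_u^2/n$ is bounded or unbounded; your route is slightly cleaner but not materially different.
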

\begin{proof}Let $0 \leq \lambda_1 \leq \cdots \leq \lambda_n$ be the eigenvalues of $g$. 
We can write $S(g) = g - \frac{\tr g}n I$ and $\K_u^2 = \tr(g)$. Direct computation yields
\[
|S(g)|^2 = \tr \bigg( \Big[ g - \frac{\tr g}{n}I\Big]^2\bigg) = \tr(g^2) - \frac1n \tr(g)^2.
\]
Note that the upper bound for $|S(g)|^2$ is now immediate. 

We now prove the lower bound. Note that the $n$-tuple of positive numbers $(\lambda_1,\dots,\lambda_n)$ satisfy $\lambda_1 \cdots \lambda_n =1$ and
$\K_u^2 = \lambda_1 + \cdots \lambda_n \geq n$ with equality if and only if $\lambda_1 = \cdots =\lambda_n =1$. 
Set $\bar\lambda = \K_u^2/n = \frac 1n(\lambda_1 + \cdots \lambda_n)$.
Since $\tr(g^2) = \sum_{i=1}^n \lambda_i^2$, it follows that
\[
\sum_{i=1}^n \big(\lambda_i - \bar\lambda\big)^2 = \sum_{i=1}^n\lambda_i^2 - 2\bar\lambda\sum_{i=1}^n\lambda_i + n\bar\lambda^2
= \tr(g^2) - \frac1n \tr(g)^2 = |S(g)|^2.
\]
We now claim that for $\delta>0$, there exists $\e = \e(\delta)$ so that whenever  
$\K_u^2/n = \bar\lambda \ge 1+\delta$ then  $\sum_i (\lambda_i - \bar \lambda)^2 \ge \e$.

To prove the claim, we argue by contradiction. Assume that there exists $\delta_0>0$
such that for each $k\in\N$ we can find positive $\lambda^k_i$ as in the hypothesis
with $\bar\lambda^k-1\ge \delta_0 >0$ and
\begin{equation}\label{equation contradiction}
\sum_i (\lambda_i^k - \bar \lambda^k)^2 \le \frac{1}{k}.\end{equation}
 
 If $\bar \lambda^k$ is a bounded sequence then
so are $\lambda^k_i$ (as $\lambda^k_i\geq 0$), 
hence for an appropriate subsequence we may assume that  $\bar \lambda^k\to\bar \lambda\ge 1+ \delta_0 $ and 
$\lambda_i^k\to \lambda_i$ as $k\to \infty$. As $\lambda^k_1\cdots\lambda^k_n=1$ for all $k$, it follows that $\lambda_1\cdots\lambda_n=1$ and 
$\lambda_i>0$. From \eqref{equation contradiction} we conclude that 
$\lambda_i=\bar \lambda$ and $1=\lambda_1\cdots\lambda_n=\bar \lambda^n\ge (1+ \delta_0)^n$, a contradiction.

 If $\bar \lambda^k$ is an unbounded sequence then
 for each $M>0$ there exists $\ell=\ell_M>0$ such that
 $\bar \lambda^\ell\ge M$. On the other hand, in view of \eqref{equation contradiction} we have $\lambda_i^\ell \ge M/2$ and consequently
 $1=\lambda_1^\ell \cdots \lambda_n^\ell \ge (M/2)^n$, a contradiction.
\end{proof}
\begin{rmrk}
When $n=2$, we can find an explicit lower bound. In this case, $\lambda_1\lambda_2=1$ and 
\[
|S(g)|^2 = \lambda_1^2 + \lambda_2^2 - \frac12 (\lambda_1+\lambda_2)^2 
= \frac12 (\lambda_1 + \lambda_2)^2 - 2\lambda_1\lambda_2 = \frac12(\K_u^4 - 4).
\]
\end{rmrk}

We are now ready to study the relation between $C^2$ extremal quasiconformal mappings and the operator $L_\infty$.

%


\begin{prop}\label{prop: max principle of K}
 Let $\Om\subset \R^n$ be an open set.
If $u\in C^2( \Om,\R^n)$ is an orientation preserving diffeomorphism solution of $L_{\infty} u=0$ in $\Omega$ then for any bounded sub-domain $\bar D\subset \Omega$, 
$$\sup_{D} \K_u \le \sup_{\p D} \K_u.$$
\end{prop}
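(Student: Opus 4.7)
The plan is to convert $L_\infty u = 0$ into a first-order PDE for $\K_u$ and then run a characteristics/flow-line argument. Using the identity just proved,
\[
(L_\infty u)^i = \frac{n^2|du|^4}{\K_u^3}\,\bigl(S(g)\,du^{-1,T}\bigr)_{ij}\,\partial_j \K_u,
\]
together with the invertibility of $du^{-1,T}$, the hypothesis $L_\infty u=0$ in $\Omega$ is equivalent to $S(g)\,du^{-1,T}\nabla \K_u = 0$ pointwise. Writing $M := S(g)\,du^{-1,T}$ and letting $b_i$ denote its $i$-th row, this reads $b_i\cdot\nabla \K_u = 0$ for $i=1,\dots,n$, so that $\K_u$ is constant along the integral curves of each vector field $b_i$ (and along any $C^1$ linear combination).

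Argue by contradiction. Suppose $M_* := \sup_{\bar D}\K_u > K_* := \sup_{\partial D}\K_u$. Since $\K_u\ge\sqrt n$ always, we automatically have $K_*\ge\sqrt n$; and if $M_*\le\sqrt n$ then $\K_u\equiv\sqrt n$ on $\bar D$, which would contradict $M_*>K_*$. Hence $M_*>\sqrt n$, and the super-level set $\Omega_*:=\{x\in\bar D:\K_u(x)>K_*\}$ is a non-empty open set compactly contained in $D$. By Lemma \ref{lemma: bound on S(g) in terms of K}, on $\Omega_*$ we have $|S(g)|\ge\varepsilon(K_*)>0$, and because $u\in C^2(\bar D)$ is a diffeomorphism, $du^{-1,T}$ is bounded and non-singular on $\bar D$. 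Consequently $M=S(g)\,du^{-1,T}\neq 0$ on $\Omega_*$, and the rows $b_i$ do not simultaneously vanish there.

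Pick an interior maximum point $x_0\in\Omega_*$ (which exists because $M_*>K_*$) and choose a row $b_{i_0}$ with $b_{i_0}(x_0)\neq 0$. The integral curve $\gamma$ of $b_{i_0}$ through $x_0$ satisfies $\K_u(\gamma(t))\equiv M_*$; in particular $\gamma$ never leaves $\Omega_*$. To reach a contradiction we need $\gamma$ — or, more generally, the orbit of $x_0$ under the full family $\{b_1,\dots,b_n\}$ — to reach $\partial D$, since then $M_* = \K_u(\gamma(t))\to K_*<M_*$, which is absurd. This is the crux. To do so, exploit the fact that $S(g)$ is symmetric and trace-free, so wherever it is non-zero its rank is at least $2$; hence on $\Omega_*$ the distribution spanned by the $b_i$'s (and their iterated Lie brackets) has dimension $\ge 2$, and by chaining flows of the different rows the orbit of $x_0$ through the level set $\{\K_u = M_*\}$ is forced to exit any compact neighborhood of $x_0$ contained in $\Omega_*$.

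The main obstacle is exactly this last step: the integral curves of a single $b_i$ can in principle remain trapped in a compact subset of $D$, so one must use all the $b_i$'s together to propagate the value $M_*$ all the way to $\partial D$. This is the row-by-row ``filling'' of $D$ by flow lines that the authors flag in the remark after Theorem \ref{converse}, and making it rigorous (together with quantitative control coming from Lemma \ref{lemma: bound on S(g) in terms of K}) is the technical heart of the proof.
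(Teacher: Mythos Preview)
Your setup is correct and matches the paper: reduce $L_\infty u=0$ to $b_i\cdot\nabla\K_u=0$ for the rows $b_i$ of $S(g)\,du^{-1,T}$, argue by contradiction from an interior point $p_0$ with $\K_u(p_0)=k_0>\sup_{\partial D}\K_u$, and use Lemma~\ref{lemma: bound on S(g) in terms of K} to ensure $S(g)\neq 0$ along the level set $\{\K_u=k_0\}$ so that flow lines can be continued (switching rows when necessary). The paper does exactly this.

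The gap is precisely where you flag it, and your proposed fix is not the paper's and, as stated, does not work. Knowing that $S(g)$ has rank $\ge 2$ and invoking Lie brackets or ``chaining flows'' does not by itself force the orbit to leave a compact set: a two-dimensional involutive distribution can have compact leaves (think of a foliation by tori), and nothing in your argument rules this out. The paper's resolution is a different, concrete trick that you are missing: rather than showing $\gamma$ exits $D$, show that $u\circ\gamma$ exits $u(D)$. Compute
\[
\frac{d}{ds}\,u^i(\gamma(s)) \;=\; du_{ij}(\gamma(s))\,\dot\gamma^j(s)
\;=\; du_{ij}\,\bigl[S(g)\,du^{-1,T}\bigr]_{lj}
\;=\; S(g)_{li}(\gamma(s)),
\]
so the velocity of the image curve is a row of $S(g)$ itself (the $du^{-1,T}$ cancels). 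Choosing at each stage the row $l$ with $\sup_i|S(g)_{li}|\ge c_n|S(g)|$ and invoking the lower bound $|S(g)|\ge\varepsilon(k_0)>0$ from Lemma~\ref{lemma: bound on S(g) in terms of K}, one gets
\[
\sup_i\bigl|u^i(\gamma(t))-u^i(p_0)\bigr|\;\ge\; c_n\,\varepsilon(k_0)\,t,
\]
which grows without bound in $t$. But $u(D)$ is bounded (since $\bar D\subset\Omega$ and $u$ is continuous), so the curve cannot have infinite length; hence it reaches $\partial D$ in finite time, giving the contradiction $k_0=\K_u(\gamma(t))\le\sup_{\partial D}\K_u$. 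This image-side linear-growth estimate is the ``technical heart'' you were looking for, and it replaces any need for rank or bracket arguments.
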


\begin{proof}
Let $\mu=\sup_{\p D} \K_u$ and assume that there exists $p_0\in D$ such that
$\K_u(p_0)=k_0>\mu\ge\sqrt{n}$.
Since $u\in C^2$  and $\det du\ge \e>0$, $S(g)(du^{-1})^T$ is  Lipschitz in $\bar D$. Consequently,
for each $p_0\in D$ and  $i=1,..,n$ there exists a unique trajectory $\gamma_i(s)$ defined for $s\in I\subset \R$ through $p_0$ satisfying
$\frac{d}{ds} \gamma_i^j(s)=[S(g)(du^{-1})^T]_{ij}(\gamma_i(s))$ for $j=1,...,n$. Using  (\ref{equation: constant along flow lines})
and the fact that $L_\infty u=0$, we have
\[
\frac{d}{ds} \K_u(\gamma_i(s)) = \frac{d}{ds} \gamma^j_i(s) \p_{x_j}\K_u(\gamma(s)) = S(g)(du^{-1})^T_{ij} \p_{x_j}\K_u(\gamma(s))=0,
\]
so
\[
\K_u(\gamma_i(s))=\K_u(p_0)
\]
for all $s\in I$ and all $i=1,\dots,n$. 
If a curve $\gamma_i$ terminates at a point $p$ inside $D$, than at $p$ there must exist another flow curve   $\gamma_l$ that flows out of it. 
In fact, not all $\gamma_i$ can have vanishing speed simultaneously at a point inside $D$. 
Arguing by contradiction, if this were to happen then  we would have
$S(g)=0$ at the end point. This would would yield $\K_u=\sqrt{n}$ at the end point, while
$\K_u(\gamma_i(s))=k_0>\sqrt{n}$, a contradiction. We choose $i$ so that 
$$\sup_j\Big | S(g)_{ij}(\gamma_i(s))\Big| \ge  C_n | S(g) |>0,$$
for $C_n \ge \frac{1}{n^2}$. 

The argument yields a piecewise $C^1$ curve $\gamma$ inside $D$, passing through $p_0$   with $\K_u(\gamma(s))=k_0$ with
$$\frac{d}{ds} \gamma^j(s)=[S(g)(du^{-1})^T]_{ij}(\gamma(s))$$ for some index $i=1,...,n$ and 
\begin{equation}\label{eqn lower bound}
\sup_j\Big | S(g)_{ij}(\gamma(s)) \Big | \ge C_n \big| S(g) \big|
\end{equation}
for all $s\in I$. There are two alternatives: (i) the curve $\gamma$ has finite length and so  touches the boundary $\p D$ in two points $P,Q\in D$; (ii) The curve $\gamma$ does not touch $\p D$ and so has infinite length.

In (i), it follows that  $\K_u(P)=k_0> \sup_{\p D} \K_u\ge \K_u(P)$, a contradiction that $k_0>\mu$.

We need to exclude the second alternative. For simplicity we assume that the composition of flow lines is actually one single flow line, 
the general case is proved in the same way. For each $i=1,...,n$, we have 
\begin{align}
u^i(\gamma(t))- u^i(p_0) &= \int_0^t \frac{d}{ds} u^i (\gamma(s)) ds \notag \\
\text{(for some }l=1...,n) \ \ &= \int_0^t \bigg[S(g) (du^{-1})^T\bigg]_{lj}(\gamma(s)) du_{ij}(\gamma(s)) ds 
\notag \\
&= \int_0^t S(g)_{li}(\gamma(s)) ds 
\end{align}
Consequently, for some $0\ge t_l \ge t$,
$$\sup_{i=1,...,n} |u^i(\gamma(t)) - u^i(p_0)| \ge t \sup_{i=1,...,n} |S(g)_{li}|(\gamma(t_l)),$$ and by  \eqref{eqn lower bound}, we conclude
$$\sup_{i=1,...,n} |u^i(\gamma(t)) - u^i(p_0)| \ge C_n |S(g)|(\gamma(t_l))  t.$$
Since $|S(g)|$ is bounded from below by Lemma \ref {lemma: bound on S(g) in terms of K},
$|u(\gamma(t))-u(p_0)|$ has at least linear growth. Consequently,  if
 $\gamma$ has infinite length, then $u(D)$ would have to be unbounded, whereas
 since $D$ is bounded so is $u(D)$.
\end{proof}

We can now prove Corollary \ref{cor:K constant on boundary, then K constant}.
\begin{proof}[Proof of Corollary \ref{cor:K constant on boundary, then K constant}.] 
Using the argument from the previous proof, we have that the set of points 
$x\in \Om$ with $\K_u(x)>\sqrt{n}$ can be covered by compositions of  flow lines of the rows of $S(g)(du^{-1})^T$ with 
 $\K_u$  constant along these curves. We have shown that if $\K_u>\sqrt{n}$ on such a curve then  it must reach the boundary $\p\Om$. To prove (1) we observe that for any $\e>0$ such that $\K_u>\sqrt{n}+\e$ on $\p\Om$, if  $x_0\in\{ x\in \Om | \K_u(x)\in (\sqrt{n},\sqrt{n}+\e)\}$
 then there exists a composition of flow lines passing through $x_0$
 which must reach the boundary and hence contradict the hypothesis $\K_u>\sqrt{n}+\e$ on $\p\Om$. 
 As for  (2), we observe that by virtue of (1) every point in $\Om$ can be connected to the boundary with a composition of flow lines along which $\K_u$ is constant, thus concluding the proof. \end{proof}
 \begin{rmrk}
 Arguing as in the proof of \eqref{equation: constant along flow lines}, we can show that for each $i=1,...,n$, if we let $\gamma:[0,\e)\to \Om$ be a flow line
 of the $i$-th row of $S(g)du^{-1,T}$, then for any $j=1,...,n$ and $0<t<\e$ we have
 $$du_{ij}(\gamma(t))- du_{ij}(\gamma(0))= \int_0^t\frac{d}{ds} u^i_j(\gamma(s)) ds= \int_0^t \dot{\gamma}^k u^i_{jk}(\gamma(s)) ds$$
 $$=  \int_0^t \big(S(g)du^{-1,T}\big)_{ik} u^i_{jk}(\gamma(s)) ds=\int_0^t \K_u \p_{x_j} \K_u (\gamma(s)) ds .$$
 This formula allows us to recover the differential of $u$ from the dilation and the flow lines of the distortion tensor. In particular,
 if $\K_u$ is constant in $\Om$ then the rows of $du$  are constant along the flow lines of  the corresponding  rows of $S(g)du^{-1,T}$.
 \end{rmrk}
 
The previous remark yields: 
\begin{prop}\label{planar case} In the hypothesis of the previous theorem, if $\Om\subset \R^2$ and $du$ (and hence $\K_u$) is constant in $\p\Om$, with $\K_u>\sqrt{n}$ on $\p \Om$, then $du$ is constant in $\Om$ and hence $u$ is affine.
\end{prop}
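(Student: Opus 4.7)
The plan is to combine Corollary \ref{cor:K constant on boundary, then K constant}(2), the remark immediately preceding the proposition, and the special structure of symmetric traceless $2\times 2$ matrices. Since $\K_u>\sqrt{2}$ is constant on $\partial \Omega$, Corollary \ref{cor:K constant on boundary, then K constant}(2) yields that $\K_u=K_0>\sqrt{2}$ is constant throughout $\Omega$. The remark preceding the proposition then tells us that, because $\partial_{x_j}\K_u = 0$, for each $i\in\{1,2\}$ the $i$-th row of $du$ is constant along every flow line of the $i$-th row of $S(g)du^{-1,T}$.

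The crucial two-dimensional input is that $S(g)=g-\tfrac{K_0^2}{2}I$ is symmetric and traceless, hence of the form $\bigl(\begin{smallmatrix}a & b \\ b & -a\end{smallmatrix}\bigr)$, so a single row vanishes if and only if the whole matrix vanishes. By the remark following Lemma \ref{lemma: bound on S(g) in terms of K}, $|S(g)|^{2}=\tfrac{1}{2}(K_0^{4}-4)>0$ uniformly in $\Omega$, so $S(g)$ is nowhere zero and neither of its rows is ever zero. Since $du^{-1,T}$ is invertible, the same holds for the rows of $S(g)du^{-1,T}$. Consequently, for every $p_0\in\Omega$ and every $i\in\{1,2\}$, the maximal flow line $\gamma_i$ of the $i$-th row of $S(g)du^{-1,T}$ through $p_0$ is well defined with nowhere vanishing velocity and does not terminate inside $\Omega$.

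Next, following the linear-growth argument from the proof of Proposition \ref{prop: max principle of K} — and using the clean two-dimensional identity $\frac{d}{dt}(u\circ \gamma_i)^{k}=S(g)_{ik}(\gamma_i(t))$, which shows that $u\circ\gamma_i$ has constant positive speed $|S(g)|/\sqrt{2}$ — one concludes that $\gamma_i$ must exit $\Omega$ at some $q_i\in\partial\Omega$ in finite time, since otherwise $u(\gamma_i)$ could not remain in the bounded set $u(\bar\Omega)$. By the remark, the $i$-th row of $du$ at $p_0$ coincides with its value at $q_i$, which, by the hypothesis that $du$ is constant on $\partial\Omega$, equals $(du|_{\partial\Omega})_{i\cdot}$. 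Applying this for $i=1,2$ gives $du(p_0)=du|_{\partial\Omega}$, so $du$ is constant throughout $\Omega$ and $u$ is affine.

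I expect the most delicate point to be the step forcing $\gamma_i$ to reach $\partial\Omega$ in finite time, since constant speed for $u\circ\gamma_i$ alone does not immediately preclude a bounded trajectory; the cleanest justification is to borrow verbatim the mean-value-theorem estimate used in the proof of Proposition \ref{prop: max principle of K}, which in the present setting benefits from $|S(g)_{i\cdot}|$ being a positive constant rather than merely bounded below. The essential two-dimensional feature that makes the whole scheme work is the absence of ``partial vanishing'' for symmetric traceless $2\times 2$ matrices; the argument does not extend to $n\geq 3$, where a row of $S(g)$ may vanish without $S(g)$ itself vanishing, so individual flow lines could terminate inside $\Omega$ and one would only recover the weaker constancy of $\K_u$ rather than the constancy of the whole matrix $du$.
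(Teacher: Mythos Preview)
Your proof is correct and follows essentially the same approach as the paper's: both use the remark that the rows of $du$ are constant along the flow lines of the corresponding rows of $S(g)du^{-1,T}$, exploit the two-dimensional fact that a row of the symmetric traceless matrix $S(g)$ vanishes only if $S(g)$ itself does, and then invoke the linear-growth argument from the proof of Proposition~\ref{prop: max principle of K} to force each flow line to reach $\partial\Omega$. Your version is somewhat more detailed (making explicit the form $\bigl(\begin{smallmatrix}a & b \\ b & -a\end{smallmatrix}\bigr)$, the lower bound $|S(g)|^2=\tfrac12(K_0^4-4)$, and the subtlety that constant speed of $u\circ\gamma_i$ alone does not suffice), but the logical skeleton is identical.
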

\begin{proof}
The remark above implies that if $\K_u$ is constant then the rows of $du$ are constant along the flow lines of the corresponding  rows of $S(g)du^{-1,T}$. It suffices then to show that for every point $p_0\in \Om$ we can find  flow lines
of both rows of $S(g)du^{-1,T}$ passing through that point and touching the boundary $\p\Om$. To establish this fact we recall that $|S(g)|>0$ in $\Om$
and that, since we are in the planar case, both rows of $S(g)$ cannot vanish unless they vanish simultaneously, which is impossible. Since $du$ is invertible
the rows of   $S(g)du^{-1,T}$ cannot vanish at any point in  $\Om$. Repeating the argument in the proof of  Theorem \ref{thm: max principle of K} we see that the flow lines of the two rows of $S(g)du^{-1,T}$ through $p_0$ cannot end in $\Om$, nor can they continue for an infinite time, hence they must reach the boundary in a finite time.
\end{proof}
\begin{rmrk} If $n=2$ and $\K_u>\sqrt{n}$ on $\p\Om$ then $L_\infty u=0$ actually implies that $\K_u$ is constant along any path in $\Om$. Hence,
in the plane there will be no $C^2(\Om,\R^2)\cap C^1(\bar\Om,\R^2)$ solutions of $L_\infty u=0$ in $\Om$ unless $\K_u|_{\p\Om}=const$.
\end{rmrk}

We conclude this section with the proof of the necessity of the condition
$L_\infty u=0$ for a $C^2$ qc mapping to locally minimize dilation in subsets $D\subset \Om$, among competitors with the same dilation on $\p D$.

\begin{prop}\label{prop:converse}
Let $u\in C^2(\Om,\R^n)$ be an orientation preserving diffeomorphism which does not solve $L_\infty u=0$ in a closed ball $\bar D\subset \Om$.  There exists 
$v\in C^2(\bar D,\R^n)$ orientation preserving diffeomorphism with $u=v$ on $\p D$ such that $\K(v,\bar D)<\K(u,\bar D)$.
\end{prop}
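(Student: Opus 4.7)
The plan is to construct $v$ directly via a second-order Taylor expansion around a point where $L_\infty u$ fails. I read the hypothesis existentially (we may choose $\bar D$), and take $\bar D=\bar B_r(p_0)$ centered at $p_0\in\Om$ with $L_\infty u(p_0)\neq 0$ and $r>0$ to be chosen small. By \eqref{equation: constant along flow lines}, $L_\infty u(p_0)\neq 0$ is equivalent to $S(g)(p_0)du^{-1,T}(p_0)\nabla\K_u(p_0)\neq 0$; in particular $\nabla\K_u(p_0)\neq 0$. Setting $\hat n:=\nabla\K_u(p_0)/|\nabla\K_u(p_0)|$, Taylor's theorem gives $\K(u,\bar B_r)=\K_u(p_0)+r|\nabla\K_u(p_0)|+O(r^2)$, with the supremum attained near $p_0+r\hat n$. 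Write $u=\ell+w$ on $\bar B_r$, where $\ell(x):=u(p_0)+du(p_0)(x-p_0)$ is the linearization (so $\K_\ell\equiv\K_u(p_0)$) and $|w|=O(r^2)$, $|dw|=O(r)$. Fix a smooth cutoff $\eta:[0,1]\to\R$ with $\eta\equiv 0$ on $[0,1/2]$, $\eta(1)=1$, and $\eta'(1)=c$ to be chosen, and define
\[
v(x):=\ell(x)+\eta(|x-p_0|/r)\,w(x),\qquad x\in\bar B_r(p_0).
\]
Then $v=u$ on $\p B_r$ and $v=\ell$ on $\bar B_{r/2}$; for $r$ small, $v\in C^2(\bar B_r,\R^n)$ is orientation-preserving since $dv=du(p_0)+O(r)$.

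Expanding $dv$ in the rescaled variable $y=(x-p_0)/r\in\bar B_1$ and using \eqref{derivatives of K} for $\p_{du}\K$, I obtain
\[
\K_v(x)=\K_u(p_0)+r\,\al_\eta(y)+O(r^2),
\]
\[
\al_\eta(y)=\eta(|y|)\,\nabla\K_u(p_0)\cdot y+\eta'(|y|)\,\K_u(p_0)^{-1}\,Q(y)^\top S(g)(p_0)du^{-1,T}(p_0)\hat y,
\]
where $Q(y):=\tfrac12 d^2u(p_0)(y,y)$. The first term uses the chain-rule identity $\nabla\K_u(p_0)\cdot y=\p_{du_{ij}}\K|_{du(p_0)}\cdot\p_k\p_ju^i(p_0)y^k$. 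At $y=\hat n$ this reads
\[
\al_\eta(\hat n)=|\nabla\K_u(p_0)|+c\,\K_u(p_0)^{-1}\,Q(\hat n)^\top S(g)(p_0)du^{-1,T}(p_0)\hat n,
\]
while for $y\neq\hat n$ on $\p B_1$, $\nabla\K_u(p_0)\cdot y<|\nabla\K_u(p_0)|$ strictly. Assuming the nondegeneracy $Q(\hat n)^\top S(g)(p_0)du^{-1,T}(p_0)\hat n\neq 0$, choose the sign of $c$ so that the $c$-term is negative and $|c|$ so small that $\sup_{y\in\bar B_1}\al_\eta(y)\le|\nabla\K_u(p_0)|-\delta$ for some $\delta>0$; the interior contribution from $\eta'(|y|)$ is bounded by $|c|$ times a constant depending only on $\eta,d^2u(p_0),du(p_0)$, and so remains small compared to the slack $|\nabla\K_u(p_0)|(1-\eta(|y|)|y|)$ left in the first term when $\eta$ is chosen so that $\eta(s)s\le 1$ on $[0,1]$. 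For $r$ sufficiently small,
\[
\K(v,\bar B_r)\le\K_u(p_0)+r(|\nabla\K_u(p_0)|-\delta)+O(r^2)<\K(u,\bar B_r),
\]
producing the desired competitor on $\bar D=\bar B_r(p_0)$.

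The main obstacle is the nondegeneracy $Q(\hat n)^\top S(g)(p_0)du^{-1,T}(p_0)\hat n\neq 0$: the hypothesis $L_\infty u(p_0)\neq 0$ only guarantees the vector $S(g)(p_0)du^{-1,T}(p_0)\hat n$ is nonzero, not that it has a nonzero component along $Q(\hat n)$. In the degenerate case where $Q(\hat n)$ happens to be orthogonal to $S(g)(p_0)du^{-1,T}(p_0)\hat n$, the rotationally symmetric scalar cutoff modification cannot lower $\al_\eta(\hat n)$ below $|\nabla\K_u(p_0)|$ at leading order, and one must enlarge the class of perturbations, e.g., by adding a vector-valued correction $\phi\in C^2(\bar B_r,\R^n)$ with $\phi|_{\p B_r}=0$ but free normal derivative on $\p B_r$. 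This supplies sufficiently many degrees of freedom in $dv|_{\p B_r}$ to cancel the obstruction along the direction of the nonzero vector $S(g)(p_0)du^{-1,T}(p_0)\hat n$ and hence recover the strict dilation decrease.
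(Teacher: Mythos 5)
Your construction does not close, and you say so yourself: the decisive step is the nondegeneracy $Q(\hat n)^\top S(g)(p_0)du^{-1,T}(p_0)\hat n\neq 0$, which does not follow from the hypothesis. The identity \eqref{equation: constant along flow lines} only gives the nonvanishing of the \emph{vector} $S(g)(p_0)du^{-1,T}(p_0)\hat n$; it says nothing about $d^2u(p_0)(\hat n,\hat n)$, which may be orthogonal to that vector or vanish outright (one can have $u^i_{jk}\hat n_j\hat n_k=0$ for all $i$ while mixed second derivatives still produce $\nabla\K_u(p_0)\neq 0$). Your proposed repair --- adding a correction vanishing on $\p B_r$ with a freely prescribed normal derivative so as to push $d\K$ against $S(g)du^{-1,T}\vec n$ --- is not a patch for an exceptional case; it \emph{is} the proof, and it is exactly what the paper does. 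The paper sets $u_\lambda=u+\lambda\chi$ with $\chi(x)=(1-|x|^2)\sum_l\phi_l(x/|x|)\vec v_l$, so that on $\p D$ one has $d\chi=-2\big(\sum_l\vec v_l\phi_l\big)\otimes\vec n$ and, by \eqref{derivatives of K}, $\K_{u_\lambda}=\K_u-2\lambda\K_u^{-1}\sum_l\langle S(g)du^{-1,T}\vec n,\vec v_l\rangle\phi_l+O(\lambda^2)$, with $\vec v_l$ chosen using precisely the nonvanishing of $S(g)du^{-1,T}\vec n$ on the set $E$ where $\sup_{\bar D}\K_u$ is attained. Since you do not carry this out, the degenerate case is genuinely open in your write-up, and the case you do treat is the easy one; moreover, once one perturbs by a $\chi$ vanishing on $\p D$, your linearization-plus-cutoff scaffolding becomes unnecessary.

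Two further problems. First, the interior estimate is wrong as stated: you claim the $\eta'(|y|)$ contribution is ``bounded by $|c|$ times a constant,'' but $c=\eta'(1)$ controls $\eta'$ only at the boundary. Since $\eta$ must rise from $0$ to $1$ on $[1/2,1]$, necessarily $\sup|\eta'|\ge 2$ regardless of $c$, so in the transition region the term $\eta'(|y|)\,\K_u^{-1}Q(y)^\top S(g)du^{-1,T}\hat y$ has a fixed size determined by $d^2u(p_0)$, and there is no reason it is dominated by the slack $(1-\eta(|y|)|y|)\,|\nabla\K_u(p_0)|$ available there. (This is repairable --- make $\eta$ transition near $|y|=0$, where $|Q(y)|=O(|y|^2)$ compensates for $\eta'$ --- but the estimate as written fails.) Second, you prove only an existential variant: the proposition fixes the ball $\bar D$ on which $L_\infty u\ne 0$ and demands a competitor on \emph{that} ball, whereas you choose $\bar D$ to be a small ball centered at a point of failure. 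The weaker version happens to suffice for Theorem \ref{converse}, but it is not the statement being proved; the paper works on the given $\bar D$ by locating $E\subset\p D$ and perturbing in a neighborhood of it.
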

\begin{proof} Let $u\in C^2(\Om,\R^n)$ be an orientation preserving diffeomorphism which does not solve $L_\infty u=0$ in a closed ball $\bar D\subset \Om$. 
In view of the conformal invariance of the PDE we can assume without loss of generality 
that $D=B(0,1)$. Let $E=\{ x\in \bar D| \K(u,\bar D)=\K_u(x)\}$.  Since  $\nabla_x \K_u=0$
at any interior point $x\in E$, we must have $E\subset \p D$ 
and consequently $\K(u,\bar D)=\sup\{\K_u(x) |x\in \p\Om\}$. If $\vn$ denotes the outer unit normal at $x$ to $\p D$, then
the latter yields that $\nabla_x \K_u (x)= \al \vn$ for some $\alpha>0$  at each $x\in E$. The identity 
(\ref{equation: constant along flow lines}) then implies
\begin{equation}\label{absurd}
 S(g) du^{-1,T} \vec n \neq \vec{0}.
\end{equation}For $\lambda\in \R$ and $\chi\in C^2(\bar D,\R^n)$,  vanishing on $\p D$,  we define $u_\lambda(x)=u(x)+\lambda \chi(x)$. 
Using \eqref{derivatives of K} and a  Taylor expansion of $\K_{u_\lambda}$ in $\lambda$,
we have that 
\begin{equation}\label{competitor}
\K_{u_\lambda}= \K_u+ \lambda \p_{q_{ij}} \K_{u} d\chi_{ij} + O(\lambda^2)=
\K_u + \lambda \K_u^{-1}\big(S(g) du^{-1,T}\big)_{ij} d\chi_{ij} +O(\lambda^2).
\end{equation}
We claim that given $u$ satisfying \eqref{absurd}, we can find a mapping $\chi\in C^2(\bar D,\R^n)$, vanishing on $\p D$,
such that the coefficient of $\lambda$ in \eqref{competitor} 
is strictly negative in a neighborhood $U$ of $E$, for small values of $\lambda$.
This fact would allow us to the conclude the proof of the proposition. Indeed, for $x\in U\cap D$ and small values of $\lambda$,  we would have 
$\K_{u_\lambda}<\K_u\le \K(u,\bar D)$. On the other hand, for $x\in D\setminus U$, there 
would exist $\e>0$ such that   $\K_u<  \K(u,\bar D)-\e$, thus yielding that 
$\K_{u_\lambda} < \K(u,\bar D)-\e + C\lambda\le \K(u,\bar D)$ for small values of $\lambda$ and   $C=C(\|u\|_{C^1}, \|\chi\|_{C^2}, D)$.  
Given such inequalities we would then  conclude that $v= u_\lambda$ 
is a  qc diffeomorphism with the same boundary data as $u$  and strictly smaller dilation $\K(u_\lambda, \bar D)< \K(u,\bar D)$.

To find $\chi$, observe that if  $p\in E$ then as a consequence of \eqref{absurd} there exists $\vec v \in \R^n$ such that 
\begin{equation}\label{competitor5}
\langle S(g) du^{-1,T}\vec n, \vec v\rangle >0
\end{equation}
in a neighborhood $B(p,r)$. Since we can cover $E$ with a finite set of such neighborhoods, we obtain vectors $\vec v_1,...,\vec v_k \in \R^n$ for which
\eqref{competitor5} holds in $B(p_k,r)$ and such that $E\subset \bigcup_{l=1}^k B(p_l,r)$. For each $l=1,...,k$, let $\phi_l:S^{n-1}\to \R$ 
be a positive smooth function such that  $\phi=0$ outside $B(p_l,r)\cap S^{n-1}$. We set 
\begin{equation}\label{competitor3}
\chi(x)= (1-|x|^2) \bigg[ \sum_{l=1}^k \phi_l\Big(\frac{x}{|x|}\Big) \vec v_l\bigg].
\end{equation}
Clearly this mapping vanishes on $\p D$ and it can be easily modified near the origin to yield a smooth mapping in $\bar D$. Observe that at every point in $S^{n-1}$,
$$d\chi=- 2 \bigg(\sum_{l=1}^k \vec v_l\phi_l \bigg)  \otimes \vec n .$$
Substituting the latter in \eqref{competitor} we obtain that for every point in $\p D$, 
\begin{equation}\label{competitor4}
 \K_{u_\lambda} = \K_u -2\lambda \K_u^{-1}\big(S(g) du^{-1,T}\big)_{ij} \vec n_j \vec v_{l,i} \phi_l +O(\lambda^2)=\K_u-2\lambda\K_u^{-1}
 \sum_{l=1}^k \langle S(g) du^{-1,T}\vec n, \vec v_l\rangle \phi_l+O(\lambda^2).
 \end{equation}
 In view of \eqref{competitor5} and the choices of $\phi_l$ and $\vec v_l$, it follows
 that for all $x$ in a neighborhood   $  E\cap B(p_l,r)\subset B(p_l,r)\cap \p D$ and $\lambda$ sufficiently small, the coefficient of $\lambda$ is strictly negative
 as  $$-2\K_u^{-1} \sum_{l=1}^k \langle S(g) du^{-1,T}\vec n, \vec v_l\rangle \phi_l <0.$$
 Thus, the  strict inequality $\K_{u_\lambda}< \K_u$ holds, whereas elsewhere in $\p D\setminus \cup_{l=1}^k B(p_l,r)$ we have equality. 
 \end{proof}
 
 %
 %
 \section{Dilation of traces of diffeomorphisms}
 
 Throughout this section $\Om\subset \R^n$ is an open set, $n\ge 3$, 
 $u\in C^2(\Om,\R^n)$ is an orientation preserving diffeomorphism,  
 $M\subset \Om$ and $M'=u(M)$ are  closed, $C^1$ hypersurfaces endowed with metrics induced by the Euclidean metric. For $x\in M$,
we denote by  $e_1,...,e_{n-1}$  an orthonormal basis of $T_xM$ and by $w_1,...,w_{n-1}$ an orthonormal basis of $T_{u(x)}M'.$ 
 We let $w_0$ be the  unit normal field to $M'$ such that $\langle du\vec n, w_0\rangle>0$. We denote by $$U=u|_M$$ the trace of $u$ on $M$.
 For each $x\in M$ consider the $(n-1)\times (n-1)$ matrix $d^M U(x)=(d_{ij})$ with
 $d_{ij}= \langle du e_i, w_j\rangle^2$.
 
 \begin{dfn}\label{tangential dilation} The {\it tangential dilation} of $U=u|_M$ at a point $x\in M$ is given by
 \[
 \K_{u,M}(x)= \frac{|d^MU|}{[\det d^Mu  ]^{\frac{1}{n-1}} }.
 \]
 \end{dfn}
 
 If $v\in C^1(\Om,\R^n)$ is an orientation preserving diffeomorphism with $u=v$ on $M$ then $\K_{u,M}=\K_{v,M}$ on $M$. 
 The following lemma is  probably well known but we give a short proof as we did not find it in the literature.
 \begin{lemma}\label{lem:trace vs dilation}
 For every $x\in M$, the dilation
 \begin{equation}\label{oneway trace}
 \K_{u,M}^2\le n^{\frac{1}{n-1}} \K_u^{\frac{2n}{n-1}} - \frac{|du \vec n|^2 \langle du\vec n , w_0\rangle^{\frac{2}{n-1}}}{[\det du]^{\frac{2}{n-1}} }.
 \end{equation}
 \end{lemma}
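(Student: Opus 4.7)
The plan is to exploit orthonormal frames adapted to $M$ and $M'$ in order to give $du$ a block-triangular form, and then reduce the claimed inequality algebraically to an elementary bound.

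First I would fix orthonormal bases $e_1,\dots,e_{n-1},\vec n$ of $\R^n$ at $x$ and $w_1,\dots,w_{n-1},w_0$ of $\R^n$ at $u(x)$ as in the statement. Since $u$ is a diffeomorphism carrying $M$ to $M'$, $du$ sends $T_xM$ into $T_{u(x)}M'$, so in these bases it takes the block upper-triangular form
\begin{equation*}
du=\begin{pmatrix} A & B \\ 0 & c \end{pmatrix},
\end{equation*}
where $A=(\langle du\,e_j,w_i\rangle)_{i,j=1}^{n-1}$ represents $dU$, $B\in\R^{n-1}$ has entries $B_i=\langle du\vec n,w_i\rangle$, and $c=\langle du\vec n,w_0\rangle>0$.

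From this decomposition I would read off the identities $|du|^2=|A|^2+|B|^2+c^2$, $\det du=c\det A$, $|du\vec n|^2=|B|^2+c^2$, together with $|d^MU|^2=|A|^2$ and $\det d^MU=\det A$, so that $\K_{u,M}^2=|A|^2/(\det A)^{2/(n-1)}$. Substituting into the stated inequality, the subtracted term on the right simplifies to $(|B|^2+c^2)\,c^{2/(n-1)}/(c\det A)^{2/(n-1)}=(|B|^2+c^2)/(\det A)^{2/(n-1)}$; after moving it to the left and combining, the inequality becomes
\begin{equation*}
\frac{|du|^2}{(\det A)^{2/(n-1)}}\;\le\;n^{1/(n-1)}\,\frac{|du|^{2n/(n-1)}}{(c\det A)^{2/(n-1)}}.
\end{equation*}

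Clearing the common factor $(\det A)^{-2/(n-1)}$ and raising both sides to the $(n-1)$-th power, this reduces to $c^2\le n\,|du|^2$, which holds trivially because $c=\langle du\vec n,w_0\rangle$ is a single entry of the matrix $du$, so $c^2\le|du|^2\le n|du|^2$. The proof has no substantive obstacle; the entire content of the lemma is the observation that the block-triangular structure of $du$ in the adapted frame cleanly separates the tangential dilation from the normal distortion, and that the only price of trading between them is this elementary bound.
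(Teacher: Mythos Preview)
Your proof is correct and essentially identical to the paper's own argument: both write $du$ in block-triangular form in the adapted orthonormal frames, extract $|du|^2=|d^MU|^2+|du\,\vec n|^2$ and $\det du=\langle du\,\vec n,w_0\rangle\det d^MU$, and reduce the inequality to $\langle du\,\vec n,w_0\rangle^2\le n|du|^2$. The only difference is cosmetic (your ordering of the basis yields an upper-triangular block rather than the paper's lower-triangular one).
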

 \begin{proof}
 We consider the two orthonormal frames of $\R^n$ given by 
 $$\{ \vec n , e_1,...,e_{n-1}\} \text{ and } \{w_0, w_1,...,w_{n-1}\}$$
 and observe that in these frames $du(x)$, $x\in M$,  can be represented as a block matrix
 $$du=\bigg(\begin{array}{ll} \langle du\vec n, w_0\rangle & 0 \\ \langle du \vec n , w_i\rangle & d^MU \end{array} \bigg).$$
 Consequently, 
 $$|du|^2=|d^MU|^2+|du \vec n|^2 \text{ and }\det du=\langle du\vec n, w_0\rangle \det d^MU.$$ The estimate \eqref{oneway trace} then follows from 
 the latter and from recalling $ \langle du\vec n, w_0\rangle \le \sqrt{n} |du|$.
 \end{proof}
 
Lemma \ref{lem:trace vs dilation} and Proposition \ref{prop:converse} immediately yield
 \begin{prop}\label{prop:converse1}
If  $u\in C^2(\Om,\R^n)$ is an orientation preserving diffeomorphism that does not solve $L_\infty u=0$ in a ball $D\subset \Om$ then $$ n^{-\frac{1}{2n}} \sup_{\p D} \K_{u,\p D}^{\frac{n-1}{n}}<\K(u,\bar D).$$
\end{prop}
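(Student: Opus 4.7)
The plan is to derive the strict inequality in two short steps: first extract a pointwise trace bound from Lemma \ref{lem:trace vs dilation}, and then use the variational competitor from Proposition \ref{prop:converse} to upgrade non-strict to strict inequality.

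First I would discard the non-positive subtracted term on the right-hand side of \eqref{oneway trace} (it is non-negative because $\det du>0$ and $\langle du\vec n, w_0\rangle>0$ by choice of $w_0$). This yields, for any orientation preserving $C^2$ diffeomorphism $u$ and any closed $C^1$ hypersurface $M\subset\Omega$, the pointwise estimate $\K_{u,M}^2 \le n^{1/(n-1)}\K_u^{2n/(n-1)}$. Raising both sides to the power $(n-1)/(2n)$ rearranges this as $n^{-1/(2n)}\K_{u,M}^{(n-1)/n} \le \K_u$. Taking a supremum over $\p D$ on the left and estimating the right-hand side by $\|\K_u\|_{L^\infty(\bar D)}=\K(u,\bar D)$ yields the \emph{non-strict} bound
$$n^{-\frac{1}{2n}} \sup_{\p D} \K_{u,\p D}^{\frac{n-1}{n}} \le \K(u,\bar D),$$
valid for \emph{every} such $u$, independent of whether $u$ solves $L_\infty u=0$.

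To produce the strict inequality under the hypothesis that $u$ does not solve $L_\infty u=0$ in the closed ball $\bar D\subset\Omega$, I would invoke Proposition \ref{prop:converse} to obtain a competitor $v\in C^2(\bar D,\R^n)$, an orientation preserving diffeomorphism with $v=u$ on $\p D$ and $\K(v,\bar D)<\K(u,\bar D)$. Since $\K_{u,\p D}$ is defined via Definition \ref{tangential dilation} in terms of tangential derivatives of $u$ along $\p D$ only, the identity $v=u$ on $\p D$ forces $\K_{v,\p D}=\K_{u,\p D}$ pointwise on $\p D$. Applying the non-strict bound to $v$ and then using the variational improvement gives
$$n^{-\frac{1}{2n}} \sup_{\p D} \K_{u,\p D}^{\frac{n-1}{n}} \;=\; n^{-\frac{1}{2n}} \sup_{\p D} \K_{v,\p D}^{\frac{n-1}{n}} \;\le\; \K(v,\bar D) \;<\; \K(u,\bar D),$$
which is the desired strict inequality.

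The argument is essentially a concatenation of two already-proved facts, so there is no substantive obstacle. The one point worth flagging is the purely boundary-intrinsic nature of $\K_{u,\p D}$: without the fact that $v=u$ on $\p D$ implies $\K_{v,\p D}=\K_{u,\p D}$ identically on $\p D$, the strict improvement from $\K(v,\bar D)<\K(u,\bar D)$ would not directly translate into the strict inequality with $\K_{u,\p D}$ on the left-hand side.
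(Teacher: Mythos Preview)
Your proof is correct and follows exactly the route the paper indicates: the paper simply states that Lemma \ref{lem:trace vs dilation} and Proposition \ref{prop:converse} immediately yield the result, and you have spelled out precisely this combination---first the pointwise trace bound giving the non-strict inequality for any diffeomorphism, then the competitor $v$ with $v=u$ on $\p D$ (hence $\K_{v,\p D}=\K_{u,\p D}$) and strictly smaller dilation to upgrade to a strict inequality.
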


Theorem \ref{converse} now follows from Propositions \ref{prop:converse} and \ref{prop:converse1}.
 
We now turn to the final step in the proof of  Theorem \ref{thm: max principle of K}.  In order  to estimate the dilation of the extension of $u|_M$ in terms of the tangential dilation we need more information about the extension.
 
 \begin{lemma}
 Let $u$ be a solution of $L_\infty u=0$ in a neighborhood of $M$. If 
 $x\in M$ satisfies $\nabla \K_u(x)\neq 0$ and  $\vec n \parallel \nabla \K_u(x)$, then 
 \begin{equation}\label{the other way trace}
\frac{n-1}{n^{\frac{n}{(n-1)}}}  \K_u^{\frac{2n}{n-1}} (x)=\K_{u,M}^2(x).
 \end{equation}
 \end{lemma}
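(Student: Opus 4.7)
The plan is to exploit the identity
$$(L_\infty u)^i = \frac{n^2|du|^4}{\K_u^3}\bigl(S(g)du^{-1,T}\bigr)_{ij}\p_{x_j}\K_u$$
from \eqref{equation: constant along flow lines} together with the hypothesis. Since $\nabla\K_u(x)$ is a nonzero scalar multiple of $\vn$ and $L_\infty u = 0$, this forces the pointwise relation $S(g)du^{-1,T}\vn = 0$ at $x$. Rewriting $S(g) = g - \tfrac{\tr g}{n}I = g - \tfrac{\K_u^2}{n}I$, we learn that $du^{-1,T}\vn$ is an eigenvector of $g$ whose eigenvalue coincides with the arithmetic mean of the eigenvalues of $g$.

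Next, I would translate this eigenvector condition into geometric information using the adapted frames from Lemma \ref{lem:trace vs dilation}. In the orthonormal bases $\{\vn,e_1,\ldots,e_{n-1}\}$ on the domain and $\{w_0,w_1,\ldots,w_{n-1}\}$ on the target, $du$ has the block form
$$du = \begin{pmatrix} a & 0 \\ b & D \end{pmatrix},$$
with $a = \langle du\vn, w_0\rangle$, $b_i = \langle du\vn, w_i\rangle$, and $D = d^M U$. A direct block inversion gives $du^{-1,T}\vn = \tfrac{1}{a}w_0$, so the eigenvector condition becomes $gw_0 = \tfrac{\K_u^2}{n}w_0$. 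Matching the blocks of $(\det du)^{-2/n}du\,du^T w_0$ against $\tfrac{\K_u^2}{n}w_0$ yields two rigid relations: $ab = 0$ (hence $b = 0$, so that $du\vn \perp M'$) and $a^2/(\det du)^{2/n} = \K_u^2/n$.

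With $b = 0$ we have $|du|^2 = |D|^2 + a^2$, so the eigenvalue relation simplifies to $|D|^2 = (n-1)a^2$. Combined with $\det du = a\det D$, the required identity follows by direct substitution into the definition of $\K_{u,M}$:
$$\K_{u,M}^2 = \frac{|D|^2}{(\det D)^{2/(n-1)}} = \frac{(n-1)\,a^{2n/(n-1)}}{(\det du)^{2/(n-1)}} = (n-1)\Bigl(\tfrac{a^2}{(\det du)^{2/n}}\Bigr)^{n/(n-1)} = \frac{n-1}{n^{n/(n-1)}}\K_u^{2n/(n-1)}.$$
The only structurally delicate point is the passage from $S(g)du^{-1,T}\vn = 0$ to the geometric statement that $du$ carries the normal of $M$ at $x$ into the normal of $M'$ at $u(x)$; this is where the $L_\infty$ equation genuinely enters the argument, and once it is extracted from the block decomposition the rest reduces to linear algebra.
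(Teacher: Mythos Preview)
Your proof is correct and follows essentially the same route as the paper's. The only cosmetic difference is that the paper works with the domain-side tensor $\tilde g = du^T du/(\det du)^{2/n}$ and deduces directly that $\vec n$ is an eigenvector of $du^T du$ with eigenvalue $|du|^2/n$, whereas you pass to the target side via $g = du\,du^T/(\det du)^{2/n}$ and obtain that $w_0$ is the eigenvector; either version immediately forces $b=0$ and $na^2=|du|^2$, after which the remaining algebra is identical.
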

\begin{proof}
We observe that $L_\infty u=0$ at $x$ is equivalent to
\begin{equation}\label{one trace}
du^T du \, \vec n - \frac{1}{n} |du|^2 \vec n=0,
\end{equation}
at $x$. In particular, $\vec n$ is an eigenvector of $du^T du(x)$ with eigenvalue $|du|^2/n$. Representing $du^T du$ in the orthonormal frame $\vec n, 
e_1,...,e_{n-1}$, with $e_i$  eigenvectors of $du^T du(x)$, tangent to $M$ corresponding to eigenvalues $\lambda_i^2$, $i=1,...,n-1$, 
we have the diagonal matrix 
\begin{equation}\label{two trace}
du^T du(x)= \Bigg( \begin{array}{llll} \frac{|du|^2}{n} &0 &... &0\\ 0 & \lambda_1^2 &... &0\\0 &0 &... &0 \\ 0& 0 &... &\lambda_{n-1}^2\end{array}\Bigg).\end{equation}
We remark that $du^Tdu|_{T_xM}=d^MU^T d^M U$, so that $|d^MU|^2=\sum_{i=1}^{n-1} \lambda_i^2$.
From \eqref{two trace}, we immediately obtain
$$|du|^2=\tr{du^T du}=\frac{|du|^2}{n} + \sum_{i=1}^{n-1} \lambda_i^2=\frac{|du|^2}{n} +|d^M U |^2$$
and $$\det du^2=\det(du^T du)= \frac{|du|^2}{n} (\det d^M U)^2.$$
To conclude, we have
$$\K_{u,M}^2=\frac{|du|^2-\frac{|du|^2}{n}}{(\det du)^{\frac{2}{n-1}}}|du|^{\frac{2}{n-1}} n^{-\frac{1}{n-1}}=
\bigg(1-\frac{1}{n}\bigg)n^{-\frac{1}{n-1}} \frac{|du|^{\frac{2n}{n-1}}}{(\det du)^{\frac{2}{n-1}}}.$$
\end{proof}

\begin{rmrk} It is interesting to compare  these conclusions with  the example $u(x)=|x|^{\al-1}x$ on $\p B(0,1)$. 
In this case, $\K_u^2=\frac{n-1+\al^2}{\al^{2/n}}$ and $\K_{u,\p B(0,1)}^2= n-1$. Note that the proof above does not apply as $\nabla \K_u=0$.  
\end{rmrk} 
\begin{proof}[Proof of  Theorem \ref{thm: max principle of K}] The first part of Theorem \ref{thm: max principle of K} is proven in Proposition
\ref{prop: max principle of K}. For the second statement, 
observe that if $x\in \p D$ satisfies $\K(u,\bar D)=\sup_{\p D} \K_u= \K_u(x)$ then either 
$\nabla \K_u(x)=0$ or it must be normal to $\p D$. If $\nabla_x\K_u(x)=0$, then the point $x$ 
must be a local maximum of $\K_u$ in $\Om$. Consequently, there must exist a continuum $F$ through $x$ on which $\K_u$ is constant 
and with $F\cap D\neq 0$, otherwise $x$ would be an isolated strict maximum point, 
an impossibility by the first part of Theorem \ref{thm: max principle of K}.  
However the existence of points in $D$ for which $\K_u=\sup_{\p D} \K_u$ contradicts the hypothesis 
$\K_u(z)<\sup_{\p D} \K_u$ for $z\in D$ and hence  $\nabla \K_u(x)\neq 0$. The proof now  follows immediately from Proposition \ref{prop: max principle of K} and from \eqref{the other way trace}. \end{proof}
%

%
%
\section{Quasiconformal Gradient Flows}
For a fixed  diffeomorphism $u_0:\Omega\to \R^n$,  we want to study diffeomorphism solutions
$u(x,t)$ of the initial value problem \eqref{intro:ivp}.
If there is a $T>0$ such that a solution $u\in C^{2}(\Om\times(0,T))$
exists with $\det du>0$ in $\Om\times (0,T)$, then  by the same computations as in \eqref{eqn:first variation},
\[
\frac{d}{dt} \Fpo = - \bigg( \frac{1}{|\Omega|} \int_{\Omega} | L_pu|^2 dx \bigg) \le 0,
\]
meaning that the $p$-distortion is nonincreasing along the flow.
Hence we obtain
\begin{prop}\label{proposition: non increasing energy}
If $u\in C^2(\Omega\times [0,T), \R^n)\cap C^1(\bar\Omega\times [0,T), \R^n)$ is a solution of \eqref{intro:ivp}
with $\det du >0$ in $\bar\Om\times[0,T)$, then for all $0\le t<T$, 
$\|\K_{u_p}\|_{L^p(\Om)}^p = \|\K_u\|_{L^p(\Om)}^p- \int_0^T \|L_p u(\cdot, t)\|_{L^2(\Om)} dt $ and consequently
\begin{equation}\label{eqtn: monotonicity of the energy}
\| \K_u\|_{L^p(\Om\times\{t\})} \le
\| \K_{u_0}\|_{L^p(\Om)} .
\end{equation}
\end{prop}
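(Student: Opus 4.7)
The plan is to differentiate $t\mapsto \Fp(u(\cdot,t),\Om)$ and to recognize the result, via the first variation formula \eqref{eqn:first variation}, as a manifestly non-positive dissipation integral. The key preliminary observation is that, because $u(\cdot,t)=u_0$ on $\p\Om$ for every $t\in[0,T)$, the time derivative $\p_t u$ vanishes identically on $\p\Om\times[0,T)$. Consequently $\psi:=\p_t u(\cdot,t)$ is an admissible ``variation'' in the sense that the integration-by-parts step used to put the first variation into divergence form produces no boundary contribution, even though $\psi$ need not have compact support inside $\Om$.

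Given the $C^2$ regularity of $u$ and the uniform positivity of $\det du$ on $\bar\Om\times[0,T)$, the integrand defining $\Fp$ is jointly smooth in $(x,t)$, so differentiation under the integral sign is legitimate. Carrying out the computation of \eqref{eqn:first variation} with $\psi$ replaced by $\p_t u$ and using the identification of the resulting divergence expression with $-(L_p u)^i$ that comes from \eqref{eqn:Lp in terms of S and K}, I obtain
\[
\frac{d}{dt}\Fp(u(\cdot,t),\Om) \;=\; -\frac{1}{|\Om|}\int_\Om (L_p u)\cdot \p_t u\,dx.
\]
The PDE $\p_t u = L_p u$ from \eqref{intro:ivp} then reduces the right-hand side to $-|\Om|^{-1}\|L_p u(\cdot,t)\|_{L^2(\Om)}^2\le 0$.

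Integrating this identity in $t$ from $0$ to $t$ yields the energy equality asserted in the proposition and, as an immediate consequence, the monotonicity \eqref{eqtn: monotonicity of the energy}. No serious obstacle is anticipated: this is the classical parabolic energy estimate for an $L^2$-gradient flow, and the regularity $u\in C^2(\Om\times[0,T))\cap C^1(\bar\Om\times[0,T))$ built into the hypotheses is exactly what is required to make the formal computation rigorous. The only subtlety worth pointing out is the vanishing of the boundary term in the integration by parts, which is immediate from $\p_t u|_{\p\Om}=0$.
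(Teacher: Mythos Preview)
Your proposal is correct and follows essentially the same route as the paper: differentiate $\Fp(u(\cdot,t),\Om)$ in $t$, invoke the first-variation computation \eqref{eqn:first variation} with $\psi=\p_t u$ (using $\p_t u|_{\p\Om}=0$ to kill the boundary term), and substitute $\p_t u=L_p u$ to obtain $\frac{d}{dt}\Fpo=-|\Om|^{-1}\|L_p u(\cdot,t)\|_{L^2(\Om)}^2$. The paper states this in one line without spelling out the boundary-term remark, but the argument is identical.
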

\bigskip
 By Lemma \ref{lem: F circ u properties} and Lemma \ref{lem: u circ F properties}, 
the functional $\Fpo$ is invariant by conformal deformation. Therefore, if we let 
$s\mapsto F_s:\R^n\to \R^n$ be a one-parameter semi-group of conformal transformations, 
then solutions to the PDE system
$$\p_t u = L_p u + \frac{d}{ds} F_s(u)\bigg|_{s=0}$$
would also satisfy  \eqref{eqtn: monotonicity of the energy}.
Recall that the flow $F_s$ is conformal if 
$$S(d\mathcal D)=\frac{ d\mathcal D + d\mathcal D^T}{2} - \frac{1}{n} \text{ trace }(d\mathcal D) I_n =0$$
where $\mathcal D=(\frac{d}{ds} F_s)\bigg|_{s=0}\circ F_0^{-1}=(\frac{d}{ds} F_s)\bigg|_{s=0}$ and  $S$ denotes the Ahlfors operator.
If $n=2$ then this amounts to $\p_{\bar z} \mathcal D=0.$ If $n\ge 3$ there is more rigidity and conformality requires that
$$\mathcal D(x)=a + Bx +2 (c \cdot x) x- |x|^2 c$$ for any vectors $a,c$ and matrix $B$ with $S(B)=0$ (see \cite{sarvas}).

We observe that in light of Proposition \ref{Proposition conformal invariance},
if $u(x,t)$ is a solution of \eqref{intro:ivp} in $\Omega\times (0,T)$  and 
$v(x,t)= \delta u(\lambda x, \delta^{-2}t)$ for some $\lambda, \delta >0$, then $v(x,t)$
is still a solution with initial data
$v_0(x)= \delta u_0(\lambda x)$ in an appropriately scaled domain. Applying inversions in a suitable way will also yield new solutions from $u(x,t)$.

\subsection{Short-time existence from smooth initial diffeomorphisms}
Throughout this section 
 $\Om\subset \R^n$ is  a bounded, $C^{2,\al}$ smooth, open set. 
 
 \begin{dfn}\label{dfn:parabolic Holder classes}
Let $\Omega\subset \R^n$ be a smooth bounded domain and for $T>0$
let $Q= \Om\times(0,T)$. The \emph{parabolic boundary} is defined by $\pa Q=
(\Om\times\{t=0\})\cup ( \p\Omega \times (0,T))$. The \emph{parabolic distance}
is $d((x,t), (y,s)):= \max ( |x-y|, \sqrt{|t-s|})$.
For 
$\alpha\in (0,1)$ we define the \emph{parabolic H\"older class}
$C^{0,\al}(Q):=\{ u\in C(Q,\R) | \  \|u\|_{C^\al(Q)}:=[u]_\al+\|u\|_0<\infty\}$, where
$$[u]_{\al}:= \sup_{(x,t),(y,s)\in Q \text{ and } (x,t)\neq (y,s)} \frac{|u(x,t)-u(y,s)|}{d((x,t),(y,s))^\al}$$
and $|u|_0=\sup_Q |u|.$
For $K\in \N$ we let $C^{K,\al}(Q)=\{u:Q\to \R| \ \p_{x_{i_1}}\cdots\p_{x_{i_K}} u\in C^{0,\al}(Q)\}$.
\end{dfn}

 \begin{prop}\label{prop:iteration}
 Let $u_0:\Omega\to \R^n$ be a $C^{2,\alpha}$ diffeomorphism for some $0<\al<1$  with $\det du_0\ge \e>0$ in $\bar\Omega$. Assume that
 $L_p u_0=0$
 for all $x\in \p\Omega$.
 There exist constants $C,T>0$ depending on
 $p, n, \Omega, \e, \|u_0\|_{C^{1,\alpha}(\bar \Omega)}$,
 and a sequence of diffeomorphisms $\{u^h\}$ in $C^{2,\al}(Q)$ with $Q=\Om\times(0,T)$ so that 
\begin{enumerate}\renewcommand{\labelenumi}{(\alph{enumi})}
\item $\det u^h \ge \frac{\e}{2}$  for all $(x,t)\in Q$,
\item $ \|u^h\|_{C^{2,\al}(Q)} + \|\p_t u^h\|_{C^{0,\al}(Q)} \le C\|u_0\|_{C^{2,\al}(\Om)}$,
\item $\begin{cases}
\p_t u^{h,i} -A_{jl}^{ik}(du^{h-1}) \p_j \p_l u^{h,k}=0 & \text{ in }Q \\
u^h=u_0 &\text{ on }\pa Q.
\end{cases}$
\end{enumerate}
 \end{prop}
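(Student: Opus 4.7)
The plan is a Picard-type iteration on the linearization of \eqref{intro:ivp}. Initialize by setting $u^0(x,t):=u_0(x)$, which is time-independent and hence trivially in $C^{2,\al}(\bar Q)$ with $\det du^0\equiv \det du_0\ge\e$, so (a) and (b) hold at step $0$. Given $u^{h-1}$ satisfying (a) and (b), the frozen coefficient field $a^{ik}_{j\ell}(x,t):=A^{ik}_{j\ell}(du^{h-1}(x,t))$ is a smooth rational function of its argument (smooth precisely because $\det du^{h-1}\ge\e/2$), hence lies in $C^{0,\al}(\bar Q)$ with norm controlled by $\|u^{h-1}\|_{C^{1,\al}(\bar Q)}$ and $\e$. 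Lemma \ref{lem:Legendre-Hadamard ellipticity} then gives the uniform Legendre--Hadamard bound $a^{ik}_{j\ell}\eta_i\xi^j\eta_k\xi^\ell \ge \lambda\,|\eta|^2|\xi|^2$ with $\lambda=\lambda(n,p,\e,\|du^{h-1}\|_\infty)>0$, so (c) is a linear parabolic system in the sense of Petrowski.

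I would solve the Dirichlet problem (c) using the vector-valued Schauder theory for linear parabolic systems satisfying Legendre--Hadamard ellipticity. Two compatibility conditions at the parabolic corner $\p\Om\times\{0\}$ must be verified. The zeroth-order condition $u^h(x,0)=u_0(x)=u^h(x,t)|_{\p\Om}$ is immediate. The first-order condition requires $\p_t u^{h,i}=a^{ik}_{j\ell}(du_0)\,\p_j\p_\ell u_0^k$ on $\p\Om\times\{0\}$; since $u^h\equiv u_0$ on the lateral boundary, $\p_t u^h=0$ there, so this collapses to the hypothesis $L_p u_0=0$ on $\p\Om$ (which is preserved at every subsequent step since the Dirichlet datum remains time-independent). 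The Schauder theory then produces $u^h\in C^{2,\al}(\bar Q)$ satisfying
\begin{equation}\label{eqn:plan-schauder}
\|u^h\|_{C^{2,\al}(Q)}+\|\p_t u^h\|_{C^{0,\al}(Q)}\le C_\#\bigl(1+\|a^{ik}_{j\ell}\|_{C^{0,\al}(Q)}\bigr)\|u_0\|_{C^{2,\al}(\Om)},
\end{equation}
with $C_\#=C_\#(p,n,\Om,\e,\lambda)$.

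To close the induction with a constant $C$ independent of $h$, I would shrink $T$. Since $u^{h-1}(\cdot,0)=u_0$, elementary parabolic interpolation yields
\[
\|u^{h-1}\|_{C^{1,\al}(\bar Q)}\le \|u_0\|_{C^{1,\al}(\Om)} + C\,T^{\al/2}\,\|u^{h-1}\|_{C^{2,\al}(Q)},
\]
so if (b) is assumed inductively for $u^{h-1}$, taking $T$ small (the smallness depending on $\|u_0\|_{C^{2,\al}(\Om)}$) gives $\|u^{h-1}\|_{C^{1,\al}(\bar Q)}\le 2\|u_0\|_{C^{1,\al}(\Om)}$ and thereby a bound on $\|a^{ik}_{j\ell}\|_{C^{0,\al}(Q)}$ depending only on $p,n,\Om,\e,\|u_0\|_{C^{1,\al}(\Om)}$. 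Feeding this into \eqref{eqn:plan-schauder} closes (b) with a fixed constant $C$. The bound $\det du^h\ge\e/2$ in (a) then follows from the uniform $C^{2,\al}$ control and $\det du^h(\cdot,0)=\det du_0\ge\e$ by further shrinking $T$ if necessary, using continuity of the determinant in the parabolic Hölder topology.

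The principal obstacle is the linear theory itself: the remark following Lemma \ref{lem:Legendre-Hadamard ellipticity} warns that $L_p$ is only Legendre--Hadamard elliptic, \emph{not} strongly elliptic, so scalar parabolic Schauder estimates cannot be applied component-by-component. One must invoke vector-valued Schauder theory for Petrowski-parabolic systems (of Solonnikov type), which in turn requires verifying the complementing (Lopatinskii--Shapiro) condition for the Dirichlet data — classical but requiring an inspection of the explicit coefficient structure of $A^{ik}_{j\ell}$. A secondary subtlety is that the first-order corner compatibility condition, which would in general be a nonlinear constraint coupling $u_0$ and $\p_t u^h|_{t=0}$, degenerates to the purely algebraic hypothesis $L_p u_0=0$ on $\p\Om$ precisely because the lateral datum is time-independent; this is what enables the induction to proceed cleanly through every iterate.
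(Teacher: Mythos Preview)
Your proposal is correct and follows the same Picard-iteration scheme as the paper: freeze coefficients at $du^{h-1}$, invoke linear parabolic Schauder theory for Legendre--Hadamard systems (the paper handles this by citing Schlag and Misawa in the appendix, so no separate verification of the Lopatinskii--Shapiro condition is carried out), check the corner compatibility via the hypothesis $L_p u_0=0$ on $\p\Om$, and shrink $T$ to keep $\det du^h\ge\e/2$ and to close the uniform bound.

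The one genuine difference is in how the $h$-independent constant is obtained. You close the induction directly: a parabolic interpolation bound of the form $\|u^{h-1}\|_{C^{1,\al}(Q)}\le \|u_0\|_{C^{1,\al}(\Om)}+O(T^\gamma)\|u^{h-1}\|_{C^{2,\al}(Q)}$ pins the coefficient norm, and hence the Schauder constant, to data depending only on $u_0$. The paper instead works with successive differences $w^N=u^N-u^{N-1}$, observes that $w^N$ solves a linear system with right-hand side $[A(du^{N-1})-A(du^{N-2})]D^2u^{N-1}$, and uses the time-scaled Schauder estimate to derive a contraction $\|dw^N\|_{C^\al}\le\theta\,\|dw^{N-1}\|_{C^\al}$ with $\theta<1$; the telescoped geometric series then gives $\|du^N-du_0\|_{C^\al}\le\e$. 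Your route is cleaner for the proposition as stated. The paper's contraction, however, does extra work that matters one step later: it shows $\{du^h\}$ is Cauchy in $C^{0,\al}$, which is what allows the passage to the nonlinear limit in Proposition~\ref{thrm:short time existence non-linear}. With only your uniform bound, Arzel\`a--Ascoli yields a convergent subsequence $u^{h_k}$, but $u^{h_k-1}$ need not belong to the same subsequence, so identifying the limit as a solution of $\p_t u=A(du)D^2u$ would still require either the Cauchy property or a separate uniqueness argument.
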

%
 \begin{proof} 
 We prove the result by induction. We start with  the base case $h=0$.
 Since
$u_0\in C^{2,\al}(\Omega)$, if we set $a_{jl}^{0,ik}(x):=A_{jl}^{ik}(du_0(x))$
then $a_{jl}^{0,ik}\in C^{1,\al}(\Omega)$ and in view of Lemma \ref{lem:Legendre-Hadamard ellipticity}, $a_{jl}^{0,ik}$  satisfies 
for all $\xi,\eta\in \R^n$ and $x\in \Omega$
 \begin{equation}\label{eqn:LH ellipticity for a_jl^ik h}
 \lambda^h |\xi|^2 |\eta|^2 \le a_{jl}^{h,ik}\xi^i\xi^k\eta_j\eta_l  \le \Lambda^h |\xi|^2 |\eta|^2,
 \end{equation}
with $h=0$ and 
\begin{equation}\label{eqn:Big Lambda bounds h}
\Lambda^h= C_2(n) p^2  \bigg(\frac{|du_h|^{np-2}}{(\det du_h)^p}+\frac{|du_h|^{n(p+2)-2}}{(\det du_h)^{p+2}}\bigg) 
\le C_2(n)p^2 \frac{(\|du_0\|_{L^{\infty}(\Omega)})^{n(p+2)-2}}{\e^{p+2}}
\end{equation}
and
\begin{equation}\label{eqn:small lambda bounds h}
\lambda^h= C_1(n,p) p \frac{|du_h|^{np-2}}{(\det du_h)^p} \ge\frac{ C(n) }{C_h\|du_0\|_{L^{\infty}(\Omega)}}
\end{equation}
with $h=0$ and  $C_0=1$. We have also used the bound $\det q \le n|q|^n$.
 
Applying Lemma \ref{lemma: existence and regularity for linear systems} with $T_0=1$
we obtain a constant $C_0=C_0(n,p,\e,\|du_0\|_{C^{\al}(\Om)})>0$ and a map $u^1\in C^{2,\al}(Q)$ that solves (c)  and satisfies 
\begin{equation}\label{alas}
T^{\al/2} \Big(T^{-1/2}\|du^h\|_{C^\al(Q)}+ \|u^h\|_{C^{2,\al}(Q)} + \|\p_t u^h\|_{C^{0,\al}(Q)}\Big) 
\le C_{h-1} \|u_0\|_{C^{2,\alpha}(\Om)},
\end{equation}
for $h=1$.

Next, the bound on $T$ will be imposed to keep the determinant from vanishing.
Set $w=u^1-u_0$ and observe that this map solves the equation
 \begin{equation}\label{pde-h=1 intermediate}
\Bigg\{
\begin{array}{ll}
\p_t w^i -A_{jl}^{ik}(du^{0}) \p_j \p_l w^i=A_{jl}^{ik}(du^{0}) \p_j \p_lu_0^i & \text{ in }Q \\
w=0 &\text{ on }\pa Q
\end{array}
\end{equation}
for $i=1,\dots, n$.
An application of the Schauder estimates (\ref{eqn:scaling}) yields
\begin{equation} \label{eqn:scaled bounds for dw}
T^{-\frac{1-\al}2} \|dw\|_{C^\al(Q)} \le C(n,p,\e,\|du_0\|_{C^{\al}(\Om)}) \|u_0\|_{C^{2,\alpha}(\Om)}.
 \end{equation}
Choose $T_1<1$ sufficiently small depending only on $n$, $p$, $\alpha$, $\|du_0\|_{C^{\al}(\Om)}$ and $\e=\min_{\Om} \det du_0$ so that
$\|dw\|_{C^{\al}(Q)} \ll  \frac{\e}{2}$. Since the determinant has polynomial dependence on the coefficients,
we have (a) for $h=1$ in
$Q_1 = \Omega \times (0,T_1)$.

Next we iterate this process to generate $u^h$ from $u^{h-1}$, $h\geq 2$, yielding estimates of the form \eqref{alas} in $Q_h=\Om\times(0,T_h)$ for some constants $C_h,T_h>0$.
 The difficulty resides in controlling the
constants $C_h$ and $T_h$ independently of $h$. 
In the following lemma, we show how to (re)choose the constants $C_h = \mathfrak C$ and $T_h = \bar T$ uniformly in $h\in \N $ and while keeping $\det du^{h+1}>\e/2$ in 
$Q_h = \Omega \times (0, \bar T)$.
\begin{lemma}\label{lem:choosing good constants}
Using the notation of Proposition \ref{prop:iteration},
if there exist $\mathcal C, \mathcal T>0$ with $\mathcal T\leq 1$ such that
\begin{equation}\label{eqtn:hyp-it}
\|du^h-du_0\|_{C^{\al}(\mathcal  Q)}\le \e \text{ and }\| \p_j \p_l u^h\|_{C^{\al}(\mathcal Q)}\le \mathcal C \|u_0\|_{C^{2,\alpha}(\Om)}
\end{equation}
for $h=1,...,N-1$ and $\mathcal  Q=\Om\times[0,\mathcal T]$, then there exist constants
$\mathfrak C=\mathfrak C(n,p,\|u_0\|_{C^{1,\alpha}(\Om)})>0$ and 
$\mathcal T\ge \mathfrak T=\mathfrak T( \mathcal  C, n,p,\|u_0\|_{C^{1,\alpha}(\Om)})>0$ that are independent of $N$ and such that
$$\|du^N-du_0\|_{C^{\al}(Q)}\le \e \text{ and }\| \p_j \p_l u^N\|_{C^{\al}(Q)}\le \mathfrak  C \|u_0\|_{C^{2,\alpha}(\Om)}$$
in $Q=\Om\times [0,\mathfrak T]$.
\end{lemma}
\begin{proof} 
We set $w^N=u^N-u^{N-1}$ and observe that $w^N$ satisfies 
\begin{equation}
\Bigg\{
\begin{array}{ll}
\p_t w^{N,i} -A_{jl}^{ik}(du^{N-1}) \p_j \p_l w^{N,k}=\bigg[A_{jl}^{ik}(du^{N-1})-A_{jl}^{ik}(du^{N-2}) \bigg]\p_j \p_l u^{N-1,k}  & \text{ in }\mathcal  Q \\
w^{N}=0 &\text{ on }\pa \mathcal  Q.
\end{array}
\end{equation}
Applying the Schauder estimates (\ref{eqn:scaling}) in the cylinder $\Om\times [0,\mathfrak T]$ with $0<\mathfrak T\le
\mathcal T$ to be chosen, we obtain
$$\|dw^N\|_{C^\al(  Q)} \le C(n,p,\e,\|du^{N-1}\|_{C^\al(  Q)}) \mathfrak T^{(1-\al)/2} 
\Bigg\|\bigg[A_{jl}^{ik}(du^{N-1})-A_{jl}^{ik}(du^{N-2}) \bigg]\p_j \p_l u^{N-1,k}\Bigg\|_{C^{\al}( Q)}.$$
The hypothesis \eqref{eqtn:hyp-it} yields a bound on the H\"older norm of the second derivatives
\[
\| \p_j \p_l u^{N-1}\|_{C^{\al}(Q)}\le \mathcal C \|u_0\|_{C^{2,\alpha}(\Om)},
\] 
and at the same time a strictly positive lower bound on $\det du^{h}>\frac{\e}{2}$, for $h=1,\dots,N-1$ in $\mathcal  Q$ so that 
 \begin{multline}
 \|dw^N\|_{C^\al(  Q)} \le C(n,p,\e,\|du^{N-1}\|_{C^\al(Q)}) \mathcal C \|u_0\|_{C^{2,\alpha}(\Om)} \mathfrak T^{(1-\al)/2}  \|dw^{N-1}\|_{C^\al( Q)}
 \\
  \le  C(n,p,\e,\|du_0\|_{C^\al (Q)})  \mathcal C \|u_0\|_{C^{2,\alpha}(\Om)}\mathfrak  T^{(1-\al)/2}  \|dw^{N-1}\|_{C^\al( Q)} .
 \end{multline}
 Choosing $ \mathfrak T$ sufficiently small depending only on $\mathcal C,n,p$ and $\|u_0\|_{C^{2,\alpha}(\Om)}$ 
 and independent of the index $N$, it follows that 
 $$\|dw^N\|_{C^\al(Q)} \le \theta \|dw^{N-1}\|_{C^\al(Q)},$$
 where $\theta\in (0,1)$.
 The latter yields
 $$\|du^N-du_0\|_{C^\al(Q)} \le \sum_{j=1}^N \|du^j- du^{j-1}\|_{C^\al(Q)}
 \le \frac{\theta}{1-\theta} \|du^1-du_0\|_{C^\al(Q)}.$$
We have proved the first part of the conclusion. To establish the estimate
 $\| \p_j \p_l u^N\|_{C^{\al}(Q)}\le \mathfrak C \|u_0\|_{C^{2,\alpha}(\Om)}$ 
 it is now sufficient to apply Schauder estimates to (c) with
$h=N-1$ and observe that the ellipticity bounds on $\Lambda$ and $\lambda$ 
are independent of $N$ in light of the estimate $\|du^N-du_0\|_{C^{\al}(Q)}\le \e$.
\end{proof}
We now complete the proof of Proposition \ref{prop:iteration}.
Applying Lemma \ref{lem:choosing good constants} to the case $h=N=2$ yields bounds
$$\|du^2-du_0\|_{C^{\al}(Q)}\le \e \text{ and }\| \p_j \p_l u^h\|_{C^{\al}(Q)}\le \mathfrak C  \|u_0\|_{C^{2,\alpha}(\Om)}$$
in $Q=\Om\times [0,\mathfrak T]$, with $\mathfrak T=\mathfrak T(C_1 \|u_0\|_{C^{2,\alpha}(\Om)}, n,p,\e)>0$ and $\mathfrak C=\mathfrak C(n,p,\e,  \|u_0\|_{C^{1,\alpha}(\Om)})>0$.
As $\mathfrak C$ is a constant independent of $C_1$, we can eliminate the dependence on $h$ by applying 
Lemma \ref{lem:choosing good constants} again, yielding
\begin{equation}\label{eqn:du^h - du_0 small}
\|du^h-du_0\|_{C^{\al}(Q)}\le \e \text{ and }\| \p_j \p_l u^2\|_{C^{\al}(Q)}\le \mathfrak C \|u_0\|_{C^{2,\alpha}(\Om)}
\end{equation}
for $h=2$ in $Q=\Om\times [0,\bar T]$ with 
\[
\bar T=\bar T(\mathfrak C, n,p,\e,  \|u_0\|_{C^{2,\alpha}(\Om)})=\bar T( n,p,\e,  \|u_0\|_{C^{2,\alpha}(\Om)})>0.
\]
At this point we proceed by induction on $h$: If  (\ref{eqn:du^h - du_0 small}) holds for $h=1,\dots,N$
in $Q=\Om\times [0,\bar T]$ with $\bar T=\bar T( n,p,\e,  \|u_0\|_{C^{2,\alpha}(\Om)})>0$ and $\mathfrak C=\mathfrak C(n,p,\e,  \|u_0\|_{C^{1,\alpha}(\Om)})>0$, then applying Lemma \ref{lem:choosing good constants} at the level of $N+1$ leads to 
(\ref{eqn:du^h - du_0 small}) for $h=N+1$ 
in $Q=\Om\times [0,\bar T]$ with $\bar T$ and $\mathfrak C$ as above; there is no degeneracy of the constants. Finally, since $\mathfrak T$ is
uniform in $h$, (b) follows from the Schauder estimate (\ref{eqn:schauder for our linear system, unscaled}).
This concludes the proof of the proposition.
 \end{proof}

The previous proposition and Arzela-Ascoli theorem yields Proposition \ref{thrm:short time existence non-linear}.
\begin{rmrk}
The proof of the short time existence is quite standard and uses only the Legendre-Hadamard ellipticity rather than the specific structure of the non-linearity in the PDE. It seems plausible to expect that techniques such as those in the paper \cite{Koch-Lamm} may also be used in our setting  to establish short-time existence for  $C^{1,\al}$ initial data.
\end{rmrk}

Note that the Schauder estimates in the appendix yield uniqueness of a $C^{2,\mu}$ solution (for short time) but nevertheless there still may exist rough solutions of the equations with  the same initial data \cite{Muller-Sverak}.

\appendix
%
%
%
%

\section{Existence and basic estimates for classical solutions of parabolic systems}\label{sec:estimates for parabolic systems}
We recall results of Schlag \cite{schlag:1996} and Misawa \cite{misawa:2004} concerning classical 
(i.e.,  two spatial  and one time derivative in $C^\al$) solutions of  the system\footnote{Both papers address more general systems.}
\begin{equation}\label{homogeneous initial data}
\Bigg\{ \begin{array}{ll} \p_t w-A^{ik}_{jl} (x,t) w_{jl}^k = f(x,t) & \text{ in }Q\\ w=0  &\text{ on }\pa Q\end{array}
\end{equation}
assuming that $\Omega$ is a $C^{2,\al}$ domain, $Q=\Om\times(0,T)$, $A,f\in C^{\al}(Q)$,   the compatibility condition $f=0$ on $\p\Omega \times\{t=0\}$ and an ellipticity assumption
\begin{equation}\label{e-schlag}
\lambda |\xi|^2 \le  A_{jl}^{ik}(x,t) \xi^i_j\xi^k_l \le \Lambda |\xi|^2,
\end{equation}
for some $\lambda,\Lambda>0$ and for all $(x,t)\in Q$ and $\xi\in \R^{n\times n}$.

Schlag proves that there exists a constant $C=C(n,\lambda, \Lambda,\|A\|_{0,\alpha}, \Omega)>0$ such that if $w\in C^{2,\al}(Q)$ and $\p_t w \in C^{0,\al}(Q)$ solves 
\eqref{homogeneous initial data} then 
\begin{equation}\label{schauder-schlag}
[w_{jl}]_\al + [w_t]_\al \le C(|w|_0 +[f]_\al).
\end{equation}
Misawa proved that such solutions exist and that the estimate can be slightly strengthened 
\begin{equation}\label{schauder-misawa}
\|w_{jl}\|_{C^\al(Q)} + \|w_t\|_{C^\al(Q)} + \|\nabla w\|_{C^\al(Q)} +\|w\|_{C^\al(Q)} \le C\|f\|_{C^\al(Q)}, \notag
\end{equation}
with a constant $C>0$ depending on $ n,\lambda, \Lambda,\|A\|_{C^{\al}(Q)}, \Omega$ and $T$.

We address the dependence of the constants in the Schauder estimates from the parameter $T$. Since these estimates have a local character we expect
the constant to blow up as $T\to \infty$ and to be bounded for $T>0$  fixed.

Let $T_0\ge T>0$ and set $\frac{1}{\sqrt{T}} \Om:=\{ x\in \R^n | \sqrt{T}x\in \Om\}$. Observe that if
$w$ solves \eqref{homogeneous initial data} then the function
$\tilde w(x,t):= w(\sqrt{T}x,T t)$ solves
\begin{equation}\label{homogeneous initial data rescaled}
\Bigg\{ \begin{array}{ll} \p_t \tilde w-A^{ik}_{jl} (\sqrt{T}x,Tt)\tilde w_{jl}^k = \tilde f(x,t):=Tf(\sqrt{T}x,Tt) & \text{ in }\frac{1}{\sqrt{T}}\Om \times (0,1) \\ \tilde w=0  &\text{ on }\pa \frac{1}{\sqrt{T}}\Om \times (0,1)\end{array} \notag
\end{equation}
Note that $\p_t \tilde w(x,t)= T \p_t w (\sqrt{T}x,Tt)$, $\p_{jl} \tilde w (x,t)= T \p_{jl} w(\sqrt{T}x,Tt)$ and $\nabla \tilde w(x,t)= \sqrt{T} \nabla w (\sqrt{T}x,Tt)$.
The H\"older norm of $\tilde A(x,t):= A(\sqrt{T}x,Tt)$ is bounded by
\[
\min\{1, T^{\alpha/2}\}\|A\|_{C^\al(Q)} \leq \|\tilde A\|_{C^\al(\frac{1}{\sqrt T}\Omega\times(0,1))} \le \|A\|_{C^\al(Q)} (1+T_0^{\al/2}).
\]
Since the ellipticity constants of the coefficients are not affected by the rescaling,
the Schauder estimates for $\tilde w$ read
$$\|\tilde w_{jl}\|_{C^\al(\frac{1}{\sqrt T}\Omega\times(0,1))} + \|\tilde w_t\|_{C^\al(\frac{1}{\sqrt T}\Omega\times(0,1))}
+ \| \nabla \tilde w\|_{C^\al(\frac{1}{\sqrt T}\Omega\times(0,1))}
+\|\tilde w\|_{C^\al(\frac{1}{\sqrt T}\Omega\times(0,1))} \le C\|\tilde f\|_{C^\al(\frac{1}{\sqrt T}\Omega\times(0,1))},
$$
with  a constant $C>0$ depending on $ n,\lambda, \Lambda,\|A\|_{C^{\al}(Q)}, \Omega$. Rescaling back this estimate to the 
parabolic cylinder $Q=\Om\times(0,T)$, we obtain
\begin{equation}\label{eqn:scaling}
\frac{\min\{1,T^{\al/2}\}}{1+T_0^{\al/2}}\Big(\|w_{jl}\|_{C^\al(Q)} + \| w_t\|_{C^\al(Q)} + T^{-1/2} \| \nabla  w\|_{C^\al(Q)} + T^{-1} \| w\|_{C^\al(Q)}\Big)
\le C\| f\|_{C^\al(Q)},
\end{equation}
with $C$ depending on the quantities above and on $T_0$.

Using the standard method based on applying Fourier transform to the integral
$$\int_\Om A^{ik}_{jl} (u^k\phi)_l (u^i\phi)_j\, dx$$
(see for instance \cite{giaquinta:1983}) we note that  the Schauder estimates
continue to hold when weakening the ellipticity assumption from \eqref{e-schlag}
to the Legendre-Hadamard ellipticity
\begin{equation}\label{LH-schlag}
\lambda |\xi|^2|\eta|^2 \le  A_{jl}^{ik}(x,t) \xi^i\xi^k\eta_j\eta_l \le \Lambda |\xi|^2|\eta|^2
\end{equation}
for some $\lambda,\Lambda>0$ and for all $(x,t)\in Q$ and $\xi,\eta\in \R^n$.
Recasting these results for the system 
\begin{equation}\label{eqn:oursystem}
\begin{cases} 
\p_t u^i -  A^{ik}_{jl}(x,t) \p_j \p_l u^k =0  \text{ in } Q \\
u(x,0)=u_0(x)  \text{ for all } x\in\pa Q\end{cases}
\end{equation}
we obtain the following:
\begin{lemma}\label{lemma: existence and regularity for linear systems}
Assume that $\p\Omega$ is $C^{1,\alpha}$, $T_0>0$ and for $0<T\le T_0$,
$Q=\Om\times(0,T)$. If  $A\in C^{0,\al}(Q)$ and  the compatibility condition 
\begin{equation}\label{eqn:compatibility}
A^{ik}_{jl}(x,0) \p_j \p_l u_0^k(x) =0 \textrm{ for all }x\in\p\Omega \textrm{ and }i=1,\dots,n, \notag
\end{equation}
holds, then 
given $u_0\in C^{2,\al}(\Om)$ there exists a solution $u\in C^{2,\al}(Q)$
of \eqref{eqn:oursystem} with $u_t\in C^{0,\al}(Q)$ and such that
\begin{equation}\label{eqn:schauder for our linear system, unscaled}
\|u\|_{C^{2,\al}(Q)} + \|  \p_t u\|_{C^\al(Q)} \le C_1 \|u_0\|_{C^\al(Q)}. 
\end{equation}
The positive constant $C_1$ depends only $T,n,\Omega, \lambda, \Lambda$ and the $C^{2,\al}$ norm of the coefficients of $A$. 
The time-scaled version of \eqref{eqn:schauder for our linear system, unscaled} is
\begin{equation}\label{eqn:schauder for our linear system, scaled}
\frac{\min\{1,T^{\al/2}\}}{1+T_0^{\al/2}}\Big(\|u_{jl}\|_{C^\al(Q)} + \| u_t\|_{C^\al(Q)} + T^{-1/2} \| \nabla  u\|_{C^\al(Q)} + T^{-1} \| u\|_{C^\al(Q)}\Big)
\le C_2\| u_0 \|_{C^\al(Q)},
\end{equation}
where $C_2$ depends only $T_0,n,\Omega, \lambda, \Lambda$ and the $C^{\al}$ norm of the coefficients of $A$. 
\end{lemma}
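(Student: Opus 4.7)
\medskip

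\noindent\textbf{Proof proposal.} The plan is to reduce the inhomogeneous-initial-data system \eqref{eqn:oursystem} to the zero-data problem \eqref{homogeneous initial data} already handled by the Schlag--Misawa machinery. Extend $u_0(x)$ trivially in $t$ and set $w(x,t):=u(x,t)-u_0(x)$. Since $u=u_0$ on $\pa Q$, we have $w=0$ on $\pa Q$, and a direct computation shows that $w$ must satisfy
\begin{equation*}
\begin{cases}
\p_t w^i - A^{ik}_{jl}(x,t)\,\p_j\p_l w^k = f^i(x,t) & \text{in } Q,\\
w=0 & \text{on }\pa Q,
\end{cases}
\qquad f^i(x,t):= A^{ik}_{jl}(x,t)\,\p_j\p_l u_0^k(x).
\end{equation*}
Because $A\in C^{0,\al}(Q)$ and $u_0\in C^{2,\al}(\Om)$, the forcing $f$ belongs to $C^{0,\al}(Q)$ with
$\|f\|_{C^\al(Q)}\le C\,\|A\|_{C^\al(Q)}\,\|u_0\|_{C^{2,\al}(\Om)}$.

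Next I would verify the compatibility condition that Schlag--Misawa require, namely $f=0$ on $\p\Om\times\{t=0\}$. This is precisely the hypothesis $A^{ik}_{jl}(x,0)\p_j\p_l u_0^k(x)=0$ for $x\in\p\Om$ assumed in the statement. Granted the Legendre--Hadamard ellipticity \eqref{LH-schlag} (which, as noted in the excerpt, suffices after the standard Fourier-based refinement of the Schauder theory), Misawa's existence theorem produces a unique solution $w\in C^{2,\al}(Q)$ with $\p_t w\in C^{0,\al}(Q)$, and the Schlag--Misawa estimate gives
\begin{equation*}
\|w\|_{C^{2,\al}(Q)}+\|\p_t w\|_{C^{0,\al}(Q)}\le C\,\|f\|_{C^{0,\al}(Q)}\le C'\,\|u_0\|_{C^{2,\al}(\Om)},
\end{equation*}
with $C'$ depending on $n,\Om,T,\lambda,\Lambda$ and $\|A\|_{C^\al(Q)}$. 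Setting $u:=w+u_0$ yields a solution of \eqref{eqn:oursystem}, and the triangle inequality (together with $\p_t u=\p_t w$, since $u_0$ is $t$-independent) gives the unscaled estimate \eqref{eqn:schauder for our linear system, unscaled}.

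The scaled version \eqref{eqn:schauder for our linear system, scaled} follows by applying the parabolic rescaling $\tilde w(x,t):=w(\sqrt Tx,Tt)$ on the unit cylinder $\tfrac{1}{\sqrt T}\Om\times(0,1)$, exactly as carried out in the paragraph preceding the lemma: the Legendre--Hadamard constants of the coefficients are preserved, the H\"older norm of the rescaled coefficient tensor is controlled by $(1+T_0^{\al/2})\|A\|_{C^\al(Q)}$, and tracking the factors of $\sqrt T$ and $T$ produced by the chain rule on $\nabla\tilde w$, $\p_{jl}\tilde w$, $\p_t\tilde w$ reproduces the weights $T^{-1/2}$, $1$, $1$ in front of the corresponding norms of $w$, together with a $T^{-1}$ on $\|w\|_{C^\al(Q)}$.

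The only genuine subtlety I foresee is checking that the Schlag--Misawa results extend from the strong ellipticity \eqref{e-schlag} (as originally stated) to the Legendre--Hadamard condition \eqref{LH-schlag} needed here; fortunately this is already indicated in the excerpt as a standard modification via Fourier multiplier estimates applied to the energy identity, so no new work is required. Apart from this, the main bookkeeping task is the scaling argument, but that has essentially been done in the paragraphs preceding the lemma statement. Everything else is a routine consequence of the $w=u-u_0$ reduction and Misawa's theorem.
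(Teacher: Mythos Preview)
Your proposal is correct and matches the paper's approach: the paper does not give a standalone proof of the lemma but simply states it as a ``recasting'' of the Schlag--Misawa results after the rescaling discussion, and the recasting you have written out (the $w=u-u_0$ reduction to \eqref{homogeneous initial data}, followed by the time-rescaling computation already carried out verbatim in the paragraphs preceding the lemma) is exactly what is intended. The same $w=u-u_0$ device appears explicitly later in the paper in the proof of Proposition~\ref{prop:iteration}, confirming that this is the reduction the authors have in mind.
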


%
\section{Evolution equations for the Jacobian and the distortion tensor.}
Let $u\in C^1([0,T], C^3(\Omega,\R^n))$ be a classical solution of  \eqref{intro:ivp} and $\tilde\Omega$ be the range of $u_0$ (or equivalently, the
range of $u(\cdot, t)$ for all $t\in[0,T]$).
Denote by $v(\cdot, t)=u^{-1} (\cdot, t)$ the inverse of the diffeomorphism $u$ at time $t$ and set $\beta(y)=\det dv(y).$ For a fixed time $t$, set $y=u(x,t)$ and $dv(y,t)=du^{-1}(x,t)$. Let
$\xi\in C^{\infty}_0(\tilde \Omega\times [0,T], \R)$.

The argument in \cite[Theorem 2.1]{evans-savin-gangbo} yields
\begin{align}
0= &  \int_0^T\int_{\Om} \big[ \p_t u^i-\p_{x_j} A^i_j(du)\big] (\p_{y_i} \xi)\big |_{u} dx\, dt \notag \\ 
= &  \int_0^T\int_{\Om}  \big[ -\p_t (\xi(u(x,t), t)) + \p_t u^i (\p_{y_i} \xi)\big |_{u} \big]  -\p_{x_j} A^i_j(du)(\p_{y_i} \xi)\big |_{u} dx\, dt  
\label{eqn: evolution of determinant}\\
= &  \int_0^T\int_{\Om}  \bigg(-\p_t \xi\big |_u \det dv\big |_u  - \p_{x_j} A^i_j(du)\det dv\big |_u (\p_{y_i} \xi)\big |_{u} \bigg) \ \det du \ dx\, dt \notag 
\end{align}

Next, we define $$\tilde A^i_j(q)= A^i_j(q^{-1})=-p\bigg( q_{ji}- n \frac{q^{ij}}{|q^{-1}|^2}\bigg)\frac{|q^{-1}|^{np}}{(\det q)^p}$$ for all non-singular matrices $q$
and observe that
$$\p_{x_j} A^i_j(du(x,t))= \p_{x_j} \tilde A^i_j( dv(u(x,t), t) )=dv^{hj}(u(x,t),t)[ \p_{y_h} \tilde A^i_j( dv(y, t) ) ]\big|_u.$$
Also, on $\tilde\Omega$,
\begin{align*}
dv^{hj}  \p_{y_h} \tilde A^i_j( dv)  \beta 
&= -\p_{y_h}\bigg[p(\cof dv)_{jh}\bigg( du^{ji}-n \frac{du_{ij}}{|du|^2}\bigg)\bigg|_v \frac{|du|^{np}}{(\det du)^p}\bigg|_v\bigg] \\
&=-\p_{y_h} \bigg[p(\cof dv)_{jh} \bigg( du^{ji}-n \frac{du_{ij}}{|du|^2}\bigg)\bigg|_v \frac{|du|^{np}}{(\det du)^p}\bigg|_v\bigg] \\
&=- \p_{y_h} \bigg[ p \beta \bigg( \delta_{hi} - n \frac{ du_{hj} du_{ij}}{|du|^2} \bigg)  \bigg|_v
   \frac{|du|^{np}}{(\det du)^p}\bigg|_v\bigg]
\end{align*}
since $\p_{y_h} (\cof dv)_{jh}=0$.
The latter and \eqref{eqn: evolution of determinant} yield

\begin{align*} 
0=& 
 \int_0^T\int_{\Om}  \bigg(-\p_t \xi\big |_u \det dv\big |_u  -dv^{hj}\big |_u[ \p_{y_h} \tilde A^i_j( dv(y, t) ) ]\big|_u \det dv\big |_u (\p_{y_i} \xi)|_{u} \bigg) 
 \det du \ dx\, dt  \\
=& \int_0^T\int_{\tilde \Om} -\p_t \xi \beta - \bigg[dv^{hj}[ \p_{y_h} \tilde A^i_j( dv(y, t) ) ] \beta \bigg] \p_{y_i} \xi dy\, dt  \\
 =& \int_0^T\int_{\tilde \Om} \xi\p_t \beta -\p_{y_i}\p_{y_h} \bigg[
 \beta \bigg( \delta_{hi} - n \frac{ du_{hj} du_{ij}}{|du|^2} \bigg)  \Bigg|_v
 \frac{|du|^{np}}{(\det du)^p}\Bigg|_v\bigg]  \xi \ dy\, dt.
\end{align*}

We have then proved the following:

\begin{lemma} \label{lem:determinant pde}
Let $u\in C^1([0,T], C^3(\Omega,\R^n))$ be a classical solution of  \eqref{intro:ivp}.
If we set $v(\cdot, t)=u^{-1} (\cdot, t)$ and  $\beta(y)=\det dv(y)$, then  $\beta$ satisfies

\begin{equation}\label{eqn: determinant pde}
 \p_t \beta  = \p_{y_i} \p_{y_h} \bigg[B_{ih}(du)\bigg|_v \beta \bigg],
 \end{equation}
 in $\tilde \Omega \times (0,T)$, with $$B_{ih}(du)= p \bigg( \delta_{hi} - n \frac{ du_{hj} du_{ij}}{|du|^2} \bigg) 
   \frac{|du|^{np}}{(\det du)^p},$$ as well as Neumann type conditions
 $$\p_\nu  \Big[dv^{jh}[ \p_{y_h} \tilde A^i_j( dv ) ] \beta \Big]= \p_\nu\p_{y_h} \Big[B_{ih}(du)\Big|_v \beta \Big] = 0$$
 for all $(y,t)\in \p \tilde \Om \times (0,T).$
\end{lemma}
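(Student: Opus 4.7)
The statement is a weak/distributional derivation: we want to recast the evolution equation satisfied by $u$ into a PDE for the Jacobian $\beta=\det dv$ on the image domain $\tilde\Omega$. The plan is to mimic the argument in \cite{evans-savin-gangbo} used to derive the transport equation for the Jacobian of a flow, but adapted to the quasilinear system $\p_t u = L_p u$ written in divergence form $(L_p u)^i = \p_{x_j} A^i_j(du)$.

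First I would fix $\xi\in C^\infty_0(\tilde\Omega\times[0,T],\R)$ and test the PDE $\p_t u^i - \p_{x_j} A^i_j(du)=0$ against $(\p_{y_i}\xi)|_{u(x,t)}$, integrating over $Q=\Omega\times(0,T)$. The first step is to rewrite
\[
\p_t u^i\,(\p_{y_i}\xi)|_u = \p_t\bigl(\xi(u(x,t),t)\bigr) - (\p_t\xi)|_u,
\]
so that the time term becomes a total time derivative plus the explicit time-derivative of $\xi$. Integrating the total time derivative over $(0,T)$ eliminates it (since $\xi$ vanishes near the temporal boundary of $\tilde\Omega\times[0,T]$, modulo a standard support argument), leaving two terms to process further.

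Next I would push forward both integrals by the change of variables $y=u(x,t)$ at each fixed $t$: the Jacobian introduces a factor $\det du = 1/\beta|_u$ which is exactly canceled when we multiply and divide by $\beta|_u$, so the integrals transfer to $\tilde\Omega\times(0,T)$ with $\beta$ appearing as a weight. To handle the spatial divergence, I would use the chain rule $\p_{x_j}A^i_j(du) = dv^{hj}\big|_u\bigl[\p_{y_h}\tilde A^i_j(dv)\bigr]\big|_u$ with $\tilde A^i_j(q)=A^i_j(q^{-1})$, then simplify the product $dv^{hj}\p_{y_h}\tilde A^i_j(dv)\,\beta$ using the Piola identity $\p_{y_h}(\cof dv)_{jh}=0$: this is what turns the awkward expression into the clean form $-\p_{y_h}[B_{ih}(du)|_v\,\beta]/p$. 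After that, a single integration by parts in $y_i$ and $y_h$ moves both derivatives onto $\xi$, producing the bulk formula
\[
0 = \int_0^T\!\!\int_{\tilde\Omega} \Big(\xi\,\p_t\beta - \xi\,\p_{y_i}\p_{y_h}\bigl[B_{ih}(du)\big|_v\,\beta\bigr]\Big)\,dy\,dt.
\]
Since $\xi$ is arbitrary with compact support in $\tilde\Omega\times(0,T)$, the pointwise PDE follows.

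Finally, for the Neumann-type boundary conditions I would allow $\xi$ to be nonzero on $\p\tilde\Omega\times(0,T)$ and re-examine the integration by parts in $y$: the surface integrals over $\p\tilde\Omega$ that appear must vanish for every such $\xi$, which forces $\p_\nu\p_{y_h}[B_{ih}(du)|_v\,\beta]=0$. The compatibility with the original boundary data $u=u_0$ on $\p\Omega\times(0,T)$ (which makes $\tilde\Omega$ constant in $t$ on its boundary) ensures no extra boundary contributions come from the time derivative step. The main technical obstacle I anticipate is purely bookkeeping: keeping the cofactor/inverse identities, the Piola identity, and the change-of-variables Jacobians straight so that the weight $\beta$ appears in the correct place after each manipulation; no genuinely new estimate is required, only careful algebra plus the smoothness hypothesis $u\in C^1([0,T],C^3(\Omega,\R^n))$ so that all the derivatives of $\tilde A^i_j(dv)$ are classical.
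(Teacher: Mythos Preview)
Your proposal is correct and follows essentially the same route as the paper: you test the divergence-form system against $(\partial_{y_i}\xi)|_u$, invoke the chain rule in $t$ together with the change of variables $y=u(x,t)$, rewrite $\partial_{x_j}A^i_j(du)$ via $\tilde A^i_j(q)=A^i_j(q^{-1})$, and use the Piola identity $\partial_{y_h}(\cof dv)_{jh}=0$ to produce the $B_{ih}$ form, exactly as in the paper's derivation (which likewise cites \cite{evans-savin-gangbo}). Two minor bookkeeping slips: since $B_{ih}$ already carries the factor $p$, there should be no extra $1/p$ in your ``clean form'', and only one integration by parts (in $y_i$) is needed because the $\partial_{y_h}$ is already present; neither affects the argument.
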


Let $\eta\in \R^n$ and $q$ a non-singular matrix, and consider the quantity
$$B_{ih}(q)\eta^i\eta^h= p \bigg( \delta_{hi} \eta^i \eta^h- n \frac{ [(\eta q)^j]^2}{|q|^2} \bigg) 
   \frac{|q|^{np}}{(\det q)^p}=p|\eta|^2 \bigg( 1- \frac{ \frac{|\eta q|^2}{|\eta|^2}}{\frac{|q|^2}{n}}\bigg) \frac{|q|^{np}}{(\det q)^p}$$

In the model case when $n=2$, $q$ is diagonal with  eigenvalues $0<\lambda_1\le \lambda_2$,  and for unit $\eta$, one has
$$B_{ih}(q)\eta^i\eta^h=p\bigg( 1- \frac{\eta_1^2 \lambda_1^2+\eta_2^2\lambda_2^2}{\frac{\lambda_1^2+\lambda_2^2}{2}}\bigg) \bigg(\frac{\lambda_1^2+\lambda_2^2}{\lambda_1\lambda_2}\bigg)^p$$

The matrix does not have a sign, it vanishes when $\lambda_1=\lambda_2$.
Unlike the case studied in \cite{evans-savin-gangbo},  the parabolic maximum principle does not apply.

\begin{lemma} If $u$ is as in Lemma \ref{lem:determinant pde}
 then the corresponding conformal metric evolves according to
\begin{multline}
\p_t g_{\al \beta} =n p\fbeta^{2/n} \Bigg\{ \p_k\p_j \bigg[ S(g)_{l \al} du^{-1}_{jl} K^{np-2}\bigg] du_{\beta k} 
+  \p_k\p_j \bigg[ S(g)_{l\beta} du^{-1}_{jl} K^{np-2}\bigg] du_{\alpha k}
\\
- \frac{2}{n}\fbeta^{-1}\bigg[ \p_{x_j} \p_{x_k} \Bigg(  S_{ih}(g) \det du^{-1}K^{np-2} \bigg) du^{kh} du^{ji} -\p_{x_k}  \Bigg(  S_{ih}(g) \det du^{-1} K^{np-2} \bigg) 
du^{sh} du^{kl} \p_{x_j}\p_{x_s} u^l du^{ji} \\+(\det du)^{-1}du^{k l}du^{is}   \p_k \p_s u^l
 \p_j \bigg( S(g)_{m  i} du^{-1}_{jm} K^{np-2}\bigg) \bigg] du_{\al h_1} du_{\beta h_1} \Bigg\}
\end{multline}
in $Q=\Om\times (0,T)$ with $g=g_0$ on $\pa Q$, and where $ \fbeta=(\det du)^{-1}=\det du^{-1} \circ u$.
\end{lemma}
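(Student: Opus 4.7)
Starting from $g_{\alpha\beta}=\fbeta^{2/n}\, du_{\alpha k}\, du_{\beta k}$ with $\fbeta=(\det du)^{-1}$, I differentiate in $t$ and apply the product rule. Commuting time and space derivatives gives $\p_t du_{\alpha k}=\p_k(L_p u)^\alpha$, so substituting the divergence form $(L_p u)^\alpha = np\,\p_j[K^{np-2}S(g)_{l\alpha}du^{-1}_{jl}]$ from \eqref{eqn:Lp in terms of S and K} produces (together with its $(\alpha,\beta)$-symmetric partner) precisely the first two terms inside the braces of the claim.

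The third, more elaborate bracket comes from $\p_t\fbeta$. Rather than use the direct identity $\p_t\fbeta=-\fbeta\, du^{kl}\p_k(L_p u)^l$---which would merely reproduce terms of the same type as before---I pull back Lemma~\ref{lem:determinant pde} through $u$. Writing $\fbeta(x,t)=\beta(u(x,t),t)$ and differentiating yields
\[
\p_t \fbeta = (\p_t\beta)\big|_u + (\nabla_y\beta)\big|_u\cdot L_p u,
\]
and substituting $\p_t\beta=\p_{y_i}\p_{y_h}[B_{ih}(du)|_v\,\beta]$. A short algebraic manipulation shows that
\[
B_{ih}(du) = p\Bigl(\delta_{hi}-n\tfrac{du_{hj}du_{ij}}{|du|^2}\Bigr)\tfrac{|du|^{np}}{(\det du)^p} = -np\, S(g)_{ih}K^{np-2},
\]
so $B_{ih}(du)\fbeta = -np\,S_{ih}(g)\,\det du^{-1}\,K^{np-2}$, which is the expression appearing inside the outer derivatives of sub-terms (A) and (B) of the statement.

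To convert $\p_{y_i}\p_{y_h}$ into $x$-derivatives I use $\p_{y_i}\Phi(v)\big|_{y=u}=du^{-1}_{ri}\p_{x_r}\Phi$ together with the matrix-inverse identity $\p_{x_r}(du^{-1}_{sh}) = -du^{-1}_{sa}du^{-1}_{bh}\,\p_r\p_b u^a$. Applied to $\Phi(x,t):=B_{ih}(du)\fbeta$, this gives
\[
\p_{y_i}\p_{y_h}[\Phi\circ v]\,\big|_{y=u} = du^{-1}_{ri}du^{-1}_{sh}\,\p_r\p_s \Phi \;-\; du^{-1}_{ri}du^{-1}_{sa}du^{-1}_{bh}\,\p_r\p_b u^a\,\p_s \Phi,
\]
which, after relabelling dummy indices, reproduces sub-terms (A) and (B). For the remaining piece $(\nabla_y\beta)|_u\cdot L_p u$ I write $\beta=\fbeta\circ v$, use $\p_{x_s}\fbeta=-\fbeta\,du^{kl}\p_k\p_s u^l$, and insert $(L_p u)^i = np\,\p_j[K^{np-2}S(g)_{mi}du^{-1}_{jm}]$; this produces sub-term (C), again with an overall factor of $-np$. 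Assembling everything, the $\p_t\fbeta$ contribution combines with the prefactor $\tfrac{2}{n}\fbeta^{2/n-1}du_{\alpha h_1}du_{\beta h_1}$ and, after extracting the common $np\fbeta^{2/n}$, becomes exactly the bracketed expression multiplied by $-\tfrac{2}{n}\fbeta^{-1}$, as claimed. The boundary condition $g=g_0$ on $\pa Q$ is immediate from the initial/boundary data for $u$. The main obstacle is purely combinatorial: bookkeeping the four-to-five summation indices produced by the chain rule and the matrix-inversion formula, and matching them to the canonical form written in the statement; no new analytic ingredient is needed beyond the $C^1_t C^3_x$ regularity already assumed.
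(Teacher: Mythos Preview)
The paper states this lemma without proof, so there is no argument to compare against directly. Your approach is the natural one and is correct: differentiate $g_{\alpha\beta}=\fbeta^{2/n}du_{\alpha k}du_{\beta k}$ in $t$, feed in $\p_t du_{\alpha k}=\p_k(L_p u)^\alpha$ via \eqref{eqn:Lp in terms of S and K} for the first two terms, and for $\p_t\fbeta$ pull back the already–proved determinant evolution \eqref{eqn: determinant pde} through $u$ using $\fbeta=\beta\circ u$. Your identification $B_{ih}(du)\fbeta=-np\,S_{ih}(g)\,\det du^{-1}\,K^{np-2}$ is exactly right, and the two-step chain rule
\[
\p_{y_i}\p_{y_h}\big[\Phi\circ v\big]\big|_{y=u}
= du^{ji}\,du^{kh}\,\p_j\p_k\Phi
- du^{ji}\,du^{sh}\,du^{kl}\,\p_j\p_s u^{l}\,\p_k\Phi
\]
reproduces sub-terms (A) and (B) after relabelling. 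The transport piece $(\p_{y_i}\beta)|_u\,(L_p u)^i$ then gives (C), and the common factor $-np$ combines with $\tfrac{2}{n}\fbeta^{2/n-1}$ to yield $-\tfrac{2}{n}\fbeta^{-1}$ inside the braces once $np\fbeta^{2/n}$ is factored out. One small caution: in matching (C) you obtain $du^{si}$ from $(\p_{y_i}\beta)|_u=du^{si}\p_{x_s}\fbeta$, whereas the displayed formula writes $du^{is}$; since $du^{-1}$ is not symmetric this is likely a typographical slip in the statement rather than in your derivation, but it is worth flagging.
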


\bibliographystyle{acm}
\bibliography{papers}
\end{document}